\newcommand{\fr}{\penalty-20\null\hfill\(\blacksquare\)}
\makeatletter \@addtoreset{equation}{section} \makeatother
\newcommand{\norm}[1]{\left\lVert#1\right\rVert}
\theoremstyle{plain}
\newtheorem{thm}{Theorem}[section]
\newtheorem{lem}[thm]{Lemma}
\newtheorem{defn}[thm]{Definition}
\newtheorem{definition}[thm]{Definition}
\newtheorem{prop}[thm]{Proposition}
\newtheorem{cor}[thm]{Corollary}
\newtheorem{rem}[thm]{Remark}
\newtheorem{ex}[thm]{Example}
\DeclareMathOperator{\dt}{dt}
\DeclareMathOperator{\Lip}{Lip}
\DeclareMathOperator{\card}{card}
\newcommand{\bbG}{\mathbb{G}}
\newcommand{\G}{\mathbb{G}}
\newcommand{\g}{\mathfrak{g}}
\newcommand{\bbR}{\mathbb{R}}
\newtheorem*{notat}{Notation}
\begin{document}

\title[Variational Problems Concerning sub-Finsler metrics in Carnot groups]
{Variational Problems Concerning sub-Finsler metrics in Carnot groups}

\author[F. Essebei]{Fares Essebei}
\address[Fares Essebei]{Dipartimento di Matematica,	Universit\`a degli Studi di Trento,	Via Sommarive 14, 38123, Povo (Trento), Italy}
\email{fares.essebei@unitn.it}

\author[E. Pasqualetto]{Enrico Pasqualetto}
\address[Enrico Pasqualetto]{Scuola Normale Superiore, Piazza dei Cavalieri 7, 56126 Pisa, Italy}
\email{enrico.pasqualetto@sns.it}

\date{\today}
\keywords{Carnot group, Gamma-convergence, sub-Finsler metric, induced intrinsic distance}
\subjclass[2020]{53C17, 49J45, 51K05}
\maketitle

\begin{abstract}
This paper is devoted to the study of geodesic distances defined on a subdomain of a given Carnot group, which are bounded both from above and from below by fixed multiples of the Carnot--Carath\'{e}odory distance.
We show that the uniform convergence (on compact sets) of these distances can be equivalently characterized in terms of $\Gamma$-convergence of several kinds of variational problems.
Moreover, we investigate the relation between the class of intrinsic distances, their metric derivatives
and the sub-Finsler convex metrics defined on the horizontal bundle.
\end{abstract}

\section{Introduction}
Since three decades, many classical metric problems in Riemannian geometry have been translated to the context of Sub-Riemannian geometry and, in particular, to Carnot groups $\mathbb{G}$, which possess a rich geometry. Indeed, they are connected and simply connected Lie groups whose associated Lie algebra admits a finite-step stratification (see \cite{ledonne, Monti, Pansu, BonLanUgu}).\\
One of these problems is devoted to the study of a particular class of geodesic 
distances, bounded from above and below, which was deeply studied in the Euclidean space and in the setting of Lipschitz manifolds (see \cite{Vent, DCP1, DCP2, DCP4}).
Our first purpose is to generalize this class to Carnot groups, in other words, we consider all geodesic maps 
$d: \Omega \times \Omega \rightarrow \mathbb{R}$ locally equivalent to the so-called 
Carnot--Carath\'{e}odory distance $d_{cc}$, defined on an open subset $\Omega \subset \mathbb{G}$
and we say that they belong to $\mathcal{D}_{cc}(\Omega)$ if:
\begin{align}
\frac{1}{\alpha} d_{cc}(x, y) \leq d(x, y) \leq \alpha 
d_{cc}(x, y)\qquad \forall x,y \in \Omega,
\end{align}
for some $\alpha \geq 1$.
According to \cite{Vent, DCP2}, it is quite natural to construct, on the so-called horizontal bundle $H \mathbb{G}$, the family of metrics $\varphi_d : H \mathbb{G} \rightarrow [0, + \infty)$ 
associated to $d \in \mathcal{D}_{cc}(\Omega)$ by differentiation:
\[
\varphi_d(x,v)\coloneqq\limsup_{t\searrow 0}
\frac{d\big(x,x\cdot\delta_t\,{\rm exp}({\rm d}_x\tau_{x^{-1}}[v])\big)}{t}
\quad\text{ for every }(x,v)\in H\G.
\]
Inspired by the notation for horizontal curves introduced by Scott D. Pauls in \cite{Pauls}, in the previous definition we denote with  $x \cdot \delta_t \exp({\rm d}_x\tau_{x^{-1}}[v])$ the dilation curve starting from the point $x \in \mathbb{G}$ with direction given by the left translation at the identity of a horizontal vector defined on the fiber $H_x \mathbb{G}$.
In particular, it turns out that $\varphi_d$ is a \textit{sub-Finsler convex metric}. These objects play an important role in the setting of the so-called \textit{Sub-Finsler Carnot groups} (for a reference see for example \cite[Section 6]{LeD21}), playing the same role of  Finsler metrics with the difference that they are defined only on the horizontal bundle.  
\begin{defn} $\varphi : H \mathbb{G}\rightarrow [0, + \infty)$ is a \emph{sub-Finsler convex metric} belonging to $\mathcal{M}_{cc}^\alpha(\mathbb{G})$ if
\begin{itemize}
\item[1)] \(\varphi:H\G\to\mathbb R\) is Borel measurable,
where \(H\G\) is endowed with the product $\sigma$-algebra;
\item[2)] $\varphi(x, \delta_{\lambda}^* v) = \lvert \lambda \rvert \varphi(x, v)$ for every $(x,v)\in H\mathbb{G}$ and $\lambda \in \mathbb{R}$;
\item[3)] $\frac{1}{\alpha} \| v\|_x \leq \varphi(x, v) \leq \alpha \| v \|_x$ for every $(x, v) \in H \mathbb{G}$;
\item[4)] $\varphi(x,v_1 + v_2)\leq \varphi(x,v_1)+\varphi(x,v_2)$ for every $x\in\mathbb{G}$ and $v_1, v_2\in H_x \mathbb{G}$.
\end{itemize}
In particular, \(\varphi(x,\cdot)\) is a norm on \(H_x\G\) for every \(x\in\G\).
\end{defn}

At the same time, it is also natural to consider the length functional 
induced by the metric derivative $\varphi_d$. We will show that
we can reconstruct the distance $d$ by minimizing the corresponding functional (Theorem \ref{lunghezza}).
\medskip

The first purpose of this paper is to compare the asymptotic behaviour of different kinds of functionals involving distances defined on a given open subset $\Omega$ of a Carnot group $\mathbb{G}$. 
Indeed, the most common approach in order to study the following variational problems relies on $\Gamma$-convergence of the corresponding functionals
(see Section 4).
In particular, inspired by the proof contained in \cite{BDF}, we state the equivalence between the 
$\Gamma$-convergence of the functionals $L_n$ and $J_n$ associated to a sequence of distances 
$(d_n)_{n \in \mathbb{N}}\subset \mathcal{D}_{cc}(\Omega)$ respectively through
$$L_n(\gamma) = \int_0^1 \varphi_{d_n}(\gamma(t), \dot{\gamma}(t)) \,{\rm d}t \quad \mbox{and} \quad J_n(\mu) = \int_{\Omega \times \Omega} d_n(x, y)\, {\rm d} \mu(x, y),$$
where $\gamma : [0,1]\rightarrow \Omega$ is a horizontal curve (Definition \ref{horizontalcurve}) and $\mu$ is a positive and finite Borel measure on $\Omega \times \Omega$.
This kind of result has been already studied in the literature,
especially for what concerns the homogenization of Riemannian and Finsler metrics (\cite{AB, AV}).
Moreover, we show an additional characterization when $\Omega$ is bounded (Theorem \ref{gammaconvergence}, point $(iv)$).
\medskip

The second purpose is to study a different application: the intrinsic analysis of sub-Finsler metrics. 
More precisely,
under suitable regularity assumptions on the metric under consideration,
we prove the following result (Theorem \ref{phi=Lip}):
\begin{equation}\label{p=L}
 \varphi(x, \nabla_{\mathbb{G}} f(x))= \Lip_{\delta_{\varphi}} f(x) 
\quad\text{for a.e.\ }  x \in \mathbb{G},
\end{equation}
where $f : \Omega \subset \mathbb{G} \rightarrow \mathbb{R}$ is a Pansu-differentiable function, $\delta_{\varphi}$ is the distance defined in (\ref{ugu}) below, $\varphi \in \mathcal{M}_{cc}^\alpha(\Omega)$ is a sub-Finsler convex metric,
and the pointwise Lipschitz constant of $f$ is given by
$$ \Lip_{\delta_{\varphi}} f(x) = \limsup_{y \rightarrow x} \frac{ \lvert f(y) - f(x) \rvert}{\delta_{\varphi}(x, y)}
\quad\mbox{for every }x\in\G.$$
The equality \eqref{p=L} may be regarded as a generalization of a result achieved in \cite{sturm}, and further generalized by Chang Y.\ Guo to admissible Finsler metrics defined 
on open subsets of $\mathbb{R}^n$, in \cite{GUO}.
Then, in order to prove (\ref{p=L}), we crucially observe that the quantity  
\begin{equation}\label{ugu}
\delta_{\varphi}(x, y) \coloneqq \sup\big\{ \lvert f(x) - f(y) \rvert \, \big| \,f\colon\G\rightarrow\mathbb R \mbox{ Lipschitz},\, 
\norm{\varphi(\cdot, \nabla_{\mathbb{G}} f (\cdot))}_{\infty} \leq 1 \big\}
\end{equation}
coincides with the intrinsic distance $d_{\varphi^\star}$, induced by the dual metric. 
This happens, for instance, when we assume that the sub-Finsler metric $\varphi$ is either lower semicontinuous or upper semicontinuous on the horizontal bundle (see Theorem \ref{maintheorem} and Corollary \ref{corollary}).
These results are a generalization of the analogous statement in \cite{DCP1}, due to De Cecco and Palmieri. 
The proof of Theorem \ref{maintheorem} heavily relies on two results contained in \cite{LDLP}.
The first one allows us to approximate an upper semicontinuous sub-Finsler metric with a family of Finsler metrics. 
The second result lets us approximate from below the sub-Finsler distance
with a family of induced Finsler distances.
Finally, we show that in many cases the distance $\delta_\varphi$, albeit defined as a supremum among Lipschitz functions, is actually already determined by smooth functions (cf.\ Proposition \ref{smooth}).
An important step in proving this fact is to approximate (say, uniformly on compact sets) any $1$-Lipschitz function with a sequence of smooth $1$-Lipschitz functions; here, the key point is that the Lipschitz constant is preserved. Since this approximation result holds in much greater generality (for instance, on possibly rank-varying sub-Finsler structures) and might be of independent interest, we will treat it in Appendix A.\\

We now give a short descriptive plan of the paper. In Section 2 we collect some of the basic geometric facts about Carnot groups and we present all preliminaries about horizontal bundles, horizontal curves, Pansu differentiability for Lipschitz functions, and the main concepts related to sub-Finsler convex metrics. In particular, we prove some properties about dual metrics associated to sub-Finsler metrics.
In Section 3 we introduce the main concept of metric derivative and we show its convexity on the horizontal fibers. Moreover, we prove a classical length representation theorem through a distance reconstruction result.
Section 4 contains the equivalence theorem between the uniform convergence of distances in $\mathcal{D}_{cc}(\Omega)$ and $\Gamma$-convergence of the length and energy functionals.
Finally, in Section 5 we introduce the main concepts of intrinsic distance and metric density and we prove the main theorems of this paper.
\medskip

\noindent\textbf{Acknowledgements.}
The authors are grateful to Andrea Pinamonti, Francesco Serra Cassano, and Gioacchino Antonelli for several discussions on the topics of this paper,
as well as to Davide Vittone for the useful comments about Theorem \ref{thm:smooth_approx}. The second named author is supported by the Balzan project
led by Luigi Ambrosio.
\section{Preliminaries}
\subsection{Carnot groups}
A connected and simply connected Lie group $\G$ is said to be a {\it Carnot group  of
step $k$} if its Lie algebra ${\mathfrak{g}}$  admits a {\it step $k$ stratification}, i.e.,
there exist linear subspaces $\mathfrak{g}_1, \ldots,\mathfrak{g}_k$ of $\mathfrak{g}$ such that
\begin{equation}\label{stratificazione}
\mathfrak{g}=\mathfrak{g}_1\oplus \ldots \oplus \mathfrak{g}_k,\quad [\mathfrak{g}_1,\mathfrak{g}_i]=\mathfrak{g}_{i+1},\quad
\mathfrak{g}_k\neq\{0\},\quad [\mathfrak{g}_1,\mathfrak{g}_k]=\{0\},
\end{equation}
where $[\mathfrak{g}_1,\mathfrak{g}_i]$ is the subspace of ${\mathfrak{g}}$ generated by
the commutators $[X,Y]$ with $X\in \mathfrak{g}_1$ and $Y\in \mathfrak{g}_i$. $\mathfrak{g}_1$ is called the \textit{first stratum} of the stratification and we will denote $m := \dim \mathfrak{g}_1 \leq n=\dim \mathfrak{g}$.
Choose a basis $e_1, \ldots , e_n$ of $\mathfrak{g}$ adapted to the stratification, that is such that
$$ e_{h_{j-1}+1}, \ldots ,e_{h_j}\ \mbox{ is a basis of}\  \mathfrak{g}_j\ \mbox{ for each }\ j = 1, \ldots ,k.$$ 
Let $X_1,\ldots ,X_n$ be the family of left invariant vector fields such that
at the identity element $e$ of $\mathbb{G}$ we have 
$X_i(e) = e_i$\, for every $i = 1, \ldots,n$.
Given \eqref{stratificazione}, the subset $X_1, \ldots , X_m$ generates by commutation all the other vector fields; we will refer to $X_1, \ldots , X_m$ as generating horizontal vector fields of the group. 
Given an element $x\in\G$, we denote by $\tau_x:\bbG\to\bbG$ the
{\em left translation} by $x$, which is given by
\[ \tau_x z \coloneqq x \cdot z\quad\mbox{for every }z\in\G,\]
where $\cdot$ is the group law in $\mathbb{G}$.
Moreover, it holds that the map $\tau_x$ is a smooth diffeomorphism, 
thus we can consider its
differential
${\rm d}_y\tau_x:T_y\G\to T_{x \cdot y}\G$ at any point $y\in\G$.

\subsection{Exponential map and Sub-Riemannian structures}
We recall that the exponential map $\exp : \mathfrak{g} \rightarrow \mathbb{G}$
is defined as
follows. First, we identify the Lie algebra $\mathfrak{g}$ with the tangent space at the identity $T_e \mathbb{G}$. Given any vector \(v\in T_e\mathbb{G}\) and denoting by \(\gamma\colon[0,1]\to\G\)
the (unique) smooth curve satisfying the ODE
\begin{equation}\label{eq:ODE_def_exp}
\left\{\begin{array}{ll}
\dot\gamma(t)={\rm d}_e\tau_{\gamma(t)}[v]\quad\text{ for every }t\in[0,1],\\
\gamma(0)=e,
\end{array}\right.
\end{equation}
we define \(\exp(v)=e^v\coloneqq\gamma(1)\), where ${\rm d}_e\tau_{\gamma(t)}[v]$ is a left-invariant vector field. It holds that \(\exp\) is
a diffeomorphism and any $p\in \G$ can be written in a unique way as 
\[
p=\exp(p_1X_1 + \dots + p_nX_n)=e^{p_1X_1 + \dots + p_nX_n},
\quad\text{ where }v = \sum_{i=1}^n p_i X_i.
\]
We can identify $p$ with the $n$-tuple $(p_1, \ldots, p_n)\in \mathbb{R}^n$ and $\G$ with $ \mathbb{R}^n$ where the group operation $\cdot$ satisfies (see \cite{Bon} and \cite[Section 7]{Monti})
\begin{equation*}
x\cdot y
= \exp\left( \exp^{-1}(x)\star \exp^{-1}(y)\right)\quad \mbox{for every}\ x,y\in \bbG,
\end{equation*}
where $\star$ denotes the group operation determined by the Campbell--Baker--Hausdorff formula, see e.g.\ \cite{Mag,BonLanUgu}.

The subbundle of the tangent bundle $T\G$ that is spanned by the vector fields $X_1,\ldots ,X_m$ plays a particularly important role in the theory, it is called the {\em horizontal bundle} $H\G$; the fibers of $H\G$ are
$$ H_x\G=\mathrm{span}\left\{X_1(x),\ldots, X_m(x)\right\}.$$
A sub-Riemannian structure can be defined on $\G$ in the following way. 
Consider a scalar product $\langle\cdot,\cdot\rangle_e$ on $\mathfrak{g}_1=H_e\G$
that makes
$\{ X_1, \ldots, X_m\}$ an orthonormal basis.
Moreover, by left translating the horizontal fiber in the identity, we obtain that
$H_x\G={\rm d}_e\tau_x(\mathfrak{g}_1)$ 
and, by \cite[Lemma 7.48]{agrac},
the map 
$$ T \mathbb{G} \ni (x, v) \mapsto (x, {\rm d}_x\tau_{x^{-1}}[v]) \in \mathbb{G} \times T_e \mathbb{G} $$
is an isomorphism between $T \mathbb{G}$ and $\mathbb{G} \times \mathfrak{g}$, in other words, the tangent bundle is trivial.
This allows us to define the scalar product $\langle\cdot,\cdot\rangle_x$ on $H_x\G$ as
\[
\langle v,w\rangle_x:=\big\langle{\rm d}_x\tau_{x^{-1}}[v],{\rm d}_x\tau_{x^{-1}}[w]
\big\rangle_e\quad\mbox{for every }v,w\in H_x\G.
\]
Notice that $\{X_1(x),\ldots,X_m(x)\}$ is an orthonormal basis of $H_x\G$
with respect to $\langle\cdot,\cdot\rangle_x$. We denote by $\norm{\cdot}_x$ the norm
induced by $\langle\cdot,\cdot\rangle_x$, namely $\norm{v}_x:=\sqrt{\langle v,v\rangle_x}$ for every $v\in H_x\G$. 
By the left invariance of the sub-Riemannian structure, for every $v \in H_x\mathbb{G}$ there exists
a unique vector $ \bar{v} \in H_e \mathbb{G}$ such that $v ={\rm d}_e\tau_x(\bar{v})$ and we get that
\begin{equation}\label{left}
\| \bar{v} \|_e = \| {\rm d}_e \tau_x [\bar{v}]\|_x = \| v \|_x \quad 
\text{for every } (x, v) \in H \mathbb{G}.
\end{equation} 
A further choice of the norm would not change the biLipschitz equivalence
class of the sub-Riemannian structure.
This is the reason why we may assume that the norm $\| \cdot\|_x$ is coming from a scalar product $\langle \cdot, \cdot \rangle_x$ (see \cite{ledonne}).
If $y = (y_1, \ldots, y_n) \in \mathbb{G}$ 
and $x \in \mathbb{G}$ are given, we set the \textit{projection map} as:
$$\pi_x : \mathbb{G}  \rightarrow H_x \mathbb{G} \;\;\; \mbox{as}\;\;\; \pi_x(y) = \sum_{j = 1}^m y_j X_j(x).$$
The map $y \mapsto \pi_x(y)$ is a smooth section of $H_x\mathbb{G}$ and it is linear in $y$. 
Furthermore, if $v \in \mathfrak{g}_1$, by exponential coordinates it holds that
\begin{equation}\label{proj}
\pi_x( e^v) = \pi_x (v_1, \ldots, v_m, 0 \ldots, 0) = \sum_{i = 1}^m v_i X_i(x) = {\rm d}_e \tau_x [v] 
\quad\text{for all } x \in \mathbb{G}\text{ and }v\in g_1.
\end{equation}
\subsection{Dilations and Carnot--Carath\'{e}odory distance} For any $\lambda >0$, we denote by $\delta^{\star}_\lambda:\mathfrak{g}\to \mathfrak{g}$ the unique linear map such that
\[ \delta^{\star}_\lambda{X}=\lambda^i X,\qquad \forall X\in \mathfrak{g}_i.\]
The maps $\delta^{\star}_{\lambda}:\mathfrak{g}\to\mathfrak{g}$ are Lie algebra automorphisms, i.e., $\delta^{\star}_{\lambda}([X, Y ]) = [\delta^{\star}_{\lambda}X, \delta^{\star}_{\lambda}Y ]$ for all $X,Y\in \mathfrak{g}$. For every $\lambda>0$, the map $\delta^{\star}_\lambda$ naturally induces an automorphism on the group $\delta_{\lambda}:\bbG\to \bbG$ by the identity 
\begin{equation}\label{dilinduced}
  \delta_{\lambda}(x)=(\exp \circ\, \delta^{\star}_{\lambda}\circ \log) (x),  
\end{equation} 
where with $\log$ we denote the inverse map of $\exp$. It is easy to verify that both the families $(\delta^*_{\lambda})_{\lambda>0}$ and $(\delta_\lambda)_{\lambda>0}$ are one-parameter groups of automorphisms (of Lie algebras and of groups, respectively), namely,
$\delta^{\star}_{\lambda}\circ \delta^{\star}_{\eta}= \delta^{\star}_{\lambda\eta}$ 
and  $\delta_{\lambda}\circ \delta_{\eta}= \delta_{\lambda\eta}$ for all $\lambda, \eta>0$.
The maps $\delta^{\star}_\lambda, \delta_\lambda$ are both called \emph{dilations of factor $\lambda$}. \\
Let us remark that,
since \(\delta_\lambda^\star v=\lambda v\) for every \(v\in\g_1=H_e\G\) and thanks to \eqref{dilinduced},
one can easily realize that
\begin{equation}\label{eq:exp_on_horizontal}
\delta_\lambda\exp(v)=\exp(\lambda v)\quad\text{ for every }
v\in H_e\G\text{ and }\lambda>0.
\end{equation}
According to \cite{Pauls}, we can extend dilations also to negative parameters $\lambda < 0$, denoting
$\delta^{\star}_{| \lambda|}(X) = \delta^{\star}_{\lambda}(- X) = |\lambda|^i (- X)$ for $X\in \mathfrak{g}_i$ and, in the present paper, we exploit this fact only on the fist layer $\mathfrak{g}_1$.
Indeed, it holds that $\pi_x(\delta_\lambda e^w) =\lambda \pi_x(e^w)$ for every $w \in \mathfrak{g}_1$ and $\lambda>0$. 
Since the dilations are defined only on the Lie algebra $\mathfrak{g}$, we extend them, by left translations, on the entire $T_x \mathbb{G}$, for every $x \in \mathbb{G}$. This allows us to write
$\delta^*_{\lambda}v = \lambda v$ for every  $\lambda > 0$, $x \in \mathbb{G}$ and $v \in H_x \mathbb{G}$.

\begin{definition}\label{horizontalcurve}
An absolutely continuous curve
$\gamma\colon [a,b]\to \bbG$ is said to be \emph{horizontal} if there exists a vector of measurable functions $h=(h_1(t),\ldots h_m(t)): [a,b]\to \mathbb{R}^{m}$ called the vector of canonical coordinates, such that
	\begin{itemize}
	\item $\dot{\gamma}(t)=\sum_{i=1}^m h_i(t)X_i(\gamma(t))$ for a.e.\ $t\in [a,b]$;
	\item $ \lvert h \rvert\in L^{\infty}(a,b)$.
	\end{itemize}
	The \emph{length} of such a curve is given by
	$L_{\bbG}(\gamma)=\int_a^b  \norm{\dot{\gamma}(t)}_{\gamma(t)} \,{\rm d}t.$
\end{definition}
Chow--Rashevskii's theorem \cite[Theorem 19.1.3]{BonLanUgu} asserts that any two points in a Carnot group can be connected by a horizontal curve. Hence, the following definition is well-posed.
\begin{definition}\label{carnotdistance}
	For every $x,y\in \mathbb{G}$, the \emph{Carnot--Carath\'{e}odory (CC) distance} is defined by
	\[ d_{cc}(x,y)=\inf \left\{L_{\bbG}(\gamma)\colon \gamma \text{ is a horizontal curve joining } x \text{ and }y\right\}.\]
\end{definition}

We remark that, by Chow--Rashevskii's Theorem, the Carnot--Carath\'eodory distance is finite. Moreover, it is homogeneous with respect to dilations and left translations, 
more precisely, for every $\lambda>0$ and for every $x,y, z\in \G$ one has
\[
d_{cc}(\delta_\lambda x,\delta_\lambda y)=\lambda d_{cc}(x,y),\qquad d_{cc}(\tau_x y,\tau_x z)=d_{cc}(y,z).
\]
This immediately implies that $\tau_x(B(y,r))=B(\tau_x y, r)$ and $\delta_\lambda B(y,r)=B(\delta_\lambda y, \lambda r)$, where  
\[ B(x,r)=\big\{y\in \bbG: d_{cc}(y, x) < r\big\}\]
is the open ball centered at $x\in \bbG$ with radius $r>0$. 
\medskip

In the sequel, we will need the following crucial estimate, proved in \cite[Theorem 1.5.1]{Monti}.
\begin{thm}\label{sea} Let $\mathbb{G}$ be a Carnot group of step $k$
and let $K\subset\mathbb{G}$ be a compact set. Then there exists $C_K=C(K) > 1$ such that 
\begin{equation}
C_K^{-1} \lvert x-y \rvert \leq d_{cc}(x,y)\leq C_K \lvert x-y \rvert^{\frac{1}{k}},\qquad \forall x,y\in K.
\end{equation}
\end{thm}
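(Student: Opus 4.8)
The plan is to establish the two inequalities in Theorem~\ref{sea} separately, both resting on the comparison between the sub-Riemannian structure and the Euclidean one in exponential coordinates, made uniform over the compact set $K$.

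First I would prove the lower bound $C_K^{-1}|x-y|\le d_{cc}(x,y)$. Fix a horizontal curve $\gamma\colon[0,T]\to\G$ joining $x$ to $y$ and parametrized by arc length, so that $\dot\gamma(t)=\sum_{i=1}^m h_i(t)X_i(\gamma(t))$ with $|h(t)|=1$ a.e.\ and $T=L_\G(\gamma)$. Writing the coordinate vector fields $X_i$ in exponential coordinates as $X_i=\sum_j a_{ij}(\cdot)\,\partial_j$ with smooth coefficients $a_{ij}$, one has $|\dot\gamma(t)|\le M_{\widetilde K}$ for a.e.\ $t$, where $M_{\widetilde K}\coloneqq\sup_{z\in\widetilde K,\,|h|=1}\big|\sum_{i}h_i X_i(z)\big|$ and $\widetilde K$ is a suitable compact neighbourhood of $K$ (one may restrict to curves staying near $K$, since far-away curves are long anyway). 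Integrating, $|x-y|\le\int_0^T|\dot\gamma(t)|\,{\rm d}t\le M_{\widetilde K}\,T=M_{\widetilde K}\,L_\G(\gamma)$; taking the infimum over $\gamma$ gives $|x-y|\le M_{\widetilde K}\,d_{cc}(x,y)$, i.e.\ the left inequality with $C_K\ge M_{\widetilde K}$.

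Next I would prove the upper bound $d_{cc}(x,y)\le C_K|x-y|^{1/k}$, which is the genuinely sub-Riemannian estimate and the main obstacle. The point is that the horizontal distribution may fail to contain all directions, so connecting two Euclidean-close points may require detours in the higher strata, and moving a Euclidean distance $\varepsilon$ in a stratum $\g_j$ costs roughly $\varepsilon^{1/j}$ in CC-distance — the worst case being $j=k$. Concretely, one fixes a ball $B=B_{cc}(e,1)$ and, by homogeneity $d_{cc}(\delta_\lambda x,\delta_\lambda y)=\lambda\,d_{cc}(x,y)$ together with the explicit polynomial form of the dilations $\delta_\lambda(z)=(\lambda z_1,\dots,\lambda z_m,\lambda^2 z_{m+1},\dots,\lambda^k z_n)$, one reduces the estimate near the identity to a compactness statement: there is $c>0$ with $d_{cc}(e,z)\le c$ whenever $|z|\le 1$, hence by scaling $d_{cc}(e,z)\le c\,|z|^{1/k}$ for $|z|$ small (distribute each coordinate block $z_j$ through $\delta_{\lambda}$ with $\lambda\sim|z_j|^{1/j}\le|z|^{1/j}$ and multiply out, absorbing the finitely many steps into the constant). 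Then for general $x,y\in K$ write $d_{cc}(x,y)=d_{cc}(e,x^{-1}\cdot y)$ by left-invariance; since the group law is polynomial and $K$ is compact, $|x^{-1}\cdot y|\le C_K'|x-y|$ for $x,y\in K$ with $|x-y|$ bounded, and combining with the near-identity estimate (and handling the non-small regime trivially by boundedness of $d_{cc}$ on $K\times K$, enlarging the constant) yields $d_{cc}(x,y)\le C_K|x-y|^{1/k}$.

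Finally I would take $C_K$ to be the maximum of the constants produced in the two halves, together with $1$, so that $C_K>1$ as claimed, and remark that all suprema and compactness arguments above are legitimate because the relevant coefficient functions are continuous and $K$ (and its chosen compact neighbourhood) are compact. The only subtlety to be careful about is the reduction to curves that do not stray far from $K$ in the lower-bound argument and the bookkeeping of exponents $1/j$ versus the uniform exponent $1/k$ in the upper-bound argument; since $1/k\le 1/j$ for $1\le j\le k$ and we only care about the behaviour for $|x-y|$ small (the large regime being controlled by compactness), this bookkeeping goes through. I expect the upper bound to require the bulk of the work, essentially reproving the ``ball-box'' type estimate, while the lower bound is a short Euclidean comparison.
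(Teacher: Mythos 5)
The paper does not prove this theorem; it is cited to \cite{Monti}, Theorem 1.5.1, and your proposal reproduces the standard ``ball-box'' argument (Euclidean comparison of the horizontal frame on a compact neighbourhood for the lower bound; homogeneity of $d_{cc}$ under the anisotropic dilations, left-invariance, and local Lipschitz continuity of $(x,y)\mapsto x^{-1}\cdot y$ near the diagonal for the upper bound), so it is correct. The one step worth making explicit is the restriction to curves staying near $K$: to avoid circularity, fix $\widetilde K$ so that the Euclidean distance from $K$ to $\widetilde K^{c}$ is at least $\operatorname{diam}_E(K)$; then any competitor curve that exits $\widetilde K$ already has, on its initial arc inside $\widetilde K$, CC-length at least $\operatorname{diam}_E(K)/M_{\widetilde K}\ge|x-y|/M_{\widetilde K}$, so the desired inequality holds for such curves as well and the constant $M_{\widetilde K}$ is fixed independently of the curve.
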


The following lemma shows the biLipschitz equivalence between the Carnot--Carath\'{e}odory distance and 
the norm induced by the scalar product.

\begin{lem}\label{lem:equiv_norm_dcc}
There exists a constant \(c\geq 1\) such that
\begin{equation}\label{eq:equiv_norm_dcc}
\frac{1}{c}\norm{v}_e \leq d_{cc}(e,\exp v)\leq c \norm{v}_e
\quad\text{ for every }v\in \mathfrak{g}_1.
\end{equation}
\end{lem}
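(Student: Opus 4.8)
The plan is to prove the bi-Lipschitz equivalence $\frac{1}{c}\norm{v}_e \leq d_{cc}(e,\exp v)\leq c\norm{v}_e$ for $v\in\mathfrak{g}_1$ by combining Theorem \ref{sea} with the homogeneity of $d_{cc}$ under dilations. First I would apply Theorem \ref{sea} on a fixed compact neighbourhood $K$ of the identity, say the closed unit ball $\overline{B(e,1)}$ together with $\exp(\overline{B})$ where $\overline{B}=\{v\in\mathfrak{g}_1:\norm{v}_e\le 1\}$ (note $\mathfrak g_1$ is finite-dimensional so this is compact, and $\exp$ is continuous, so the union is compact). This gives a constant $C_K>1$ with $C_K^{-1}|x-y|\le d_{cc}(x,y)\le C_K|x-y|^{1/k}$ for $x,y\in K$. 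Applying this with $x=e$ and $y=\exp(v)$ for $v\in\mathfrak{g}_1$ with $\norm{v}_e\le 1$ controls $d_{cc}(e,\exp v)$ from both sides by powers of the Euclidean norm $|\exp v|$ (in exponential coordinates, $\exp v = (v_1,\dots,v_m,0,\dots,0)$, so $|\exp v| = \norm{v}_e$ since $X_1,\dots,X_m$ is orthonormal); this settles the case $\norm{v}_e\le 1$, where on $K$ the quantities $\norm v_e$, $\norm v_e^{1/k}$, $d_{cc}(e,\exp v)$ are all comparable up to a constant depending only on $K$ and $k$ — though care is needed because $t\mapsto t$ and $t\mapsto t^{1/k}$ are only comparable on bounded sets, which is fine here since $\norm v_e\le 1$.

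Next I would remove the smallness restriction using scaling. For arbitrary nonzero $v\in\mathfrak{g}_1$, set $\lambda:=\norm{v}_e$ and $w:=v/\lambda$, so $\norm{w}_e=1$. By \eqref{eq:exp_on_horizontal}, $\exp(v)=\exp(\lambda w)=\delta_\lambda\exp(w)$, and by the dilation-homogeneity of $d_{cc}$ recalled after Definition \ref{carnotdistance},
\[
d_{cc}(e,\exp v)=d_{cc}(\delta_\lambda e,\delta_\lambda\exp w)=\lambda\,d_{cc}(e,\exp w).
\]
Since $\norm{w}_e=1$, the first step gives a constant $c_0\ge 1$ with $c_0^{-1}\le d_{cc}(e,\exp w)\le c_0$; indeed, $d_{cc}(e,\exp w)$ is a continuous positive function of $w$ on the compact unit sphere $\{\norm{w}_e=1\}$ (continuity of $d_{cc}$ follows from Theorem \ref{sea}, or one can just use the upper and lower bounds from the first step), hence bounded away from $0$ and $\infty$ there. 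Multiplying by $\lambda=\norm v_e$ yields $c_0^{-1}\norm{v}_e\le d_{cc}(e,\exp v)\le c_0\norm{v}_e$, and the case $v=0$ is trivial. Taking $c:=c_0$ completes the proof.

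The main obstacle is a minor one: one must be careful that the two-sided estimate of Theorem \ref{sea} mixes the exponents $1$ and $1/k$, so it does not directly give a homogeneous bi-Lipschitz bound on all of $\mathfrak{g}_1$ — it only does so on a bounded region. The scaling argument via \eqref{eq:exp_on_horizontal} and the dilation-homogeneity of $d_{cc}$ is precisely what repairs this, reducing everything to the compact unit sphere of $\mathfrak g_1$. An alternative, slightly slicker route avoiding any appeal to continuity of $d_{cc}$: use the first step to get constants $a,b>0$ with $a\norm v_e\le d_{cc}(e,\exp v)$ and $d_{cc}(e,\exp v)\le b\norm v_e$ whenever $\norm v_e\le 1$ (the latter since $\norm v_e^{1/k}\le \norm v_e$ is false — rather $\norm v_e\le\norm v_e^{1/k}$ for $\norm v_e\le 1$, so one gets $d_{cc}(e,\exp v)\le C_K\norm v_e^{1/k}$, which is $\le C_K$ but not $\le b\norm v_e$; hence the sphere-compactness remark really is the clean way to get the \emph{upper} bound at scale $1$). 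Either way, the scaling step is the heart of the matter and is entirely routine once \eqref{eq:exp_on_horizontal} is in hand.
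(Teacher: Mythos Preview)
Your proposal is correct and takes essentially the same approach as the paper: restrict to the unit sphere of $(\mathfrak g_1,\|\cdot\|_e)$, use compactness together with the continuity of $d_{cc}$ (via Theorem~\ref{sea}) to bound $v\mapsto d_{cc}(e,\exp v)$ away from $0$ and $\infty$ there, and then extend to all of $\mathfrak g_1$ by the dilation-homogeneity $d_{cc}(e,\exp(\lambda v))=\lambda\,d_{cc}(e,\exp v)$. The paper's write-up simply omits your initial detour through the non-homogeneous estimate $C_K^{-1}|x-y|\le d_{cc}(x,y)\le C_K|x-y|^{1/k}$ and goes straight to the sphere-compactness argument, which is exactly what you yourself identify as ``the clean way''.
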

\begin{proof}
Denote by \(S\) the unit sphere of \((H_e\mathbb G,\norm{\cdot}_e)\),
namely \(S:=\{v\in H_e\mathbb G\,:\,\norm{v}_e=1\}\). 
Define the function \(\eta: S\to[0,+\infty)\) as \(\eta(v)\coloneqq d_{cc}(e,\exp v)\)
for every \(v\in S\). By Theorem \ref{sea}, \(\eta\) is continuous on the compact set \(S\). Then we can
find \(c\geq 1\) such that \(1/c \leq \eta(v) \leq c\) holds for every
\(v\in S\). We can thus conclude by \(1\)-homogeneity: since
\(d_{cc}(e,\exp(\lambda v))=\lambda d_{cc}(e,\exp v)\)
for every \(\lambda>0\) and \(v\in S\), we deduce that
\(\eta(v/\|v\|_e)=d_{cc}(e,\exp v)/\|v\|_e\) for every
\(v\in H_e\mathbb G \setminus\{0\}\) and thus
\[
\frac{1}{c}\leq\frac{d_{cc}(e,\exp v)}{\|v\|_e}\leq c
\quad\text{ for every }v\in H_e\mathbb G\setminus\{0\},
\]
which yields \eqref{eq:equiv_norm_dcc}.
\end{proof}

\subsection{Differentiability in Carnot Groups}
We recall some basic definitions regarding differentiability in Carnot groups. 
\begin{defn} A map $L : \mathbb{G} \rightarrow \mathbb{R}$ is called a \emph{homogeneous homomorphism} if 
 \[ L(x \cdot y) = L(x) + L(y) \quad\text{and}\quad
  L(\delta_\lambda(x)) = \lambda L(x)  \quad\text{for every } x, y \in \mathbb{G} \quad\text{and}\quad \lambda>0.\]
\end{defn}

Now we are ready to introduce the following fundamental notion of  differentiability, see \cite{Pansu}.

\begin{defn}\label{diff} Let $\Omega\subset \bbG$ be an open subset. A map $f : \Omega \rightarrow \mathbb{R}$ is \emph{Pansu differentiable} at $x \in \Omega$ if there exists a homogeneous homomorphism 
$L_x : \mathbb{G} \rightarrow \mathbb{R}$ such that
\begin{equation*}
\lim_{y \rightarrow x} \frac{f(x) - f(y) - L_x [y^{-1} \cdot x]}
{d_{cc}(y, x)} = 0.
\end{equation*}
The map $L_x \coloneqq d_{\mathbb{G}} f(x):\G\to \mathbb{R}$ is called Pansu differential of $f$ at $x$.
\end{defn}
\begin{rem}\label{scalprod}
{\rm
If $f:\Omega\to \mathbb{R}$ is differentiable at $x \in\Omega$, then $X_j f(x)$ exists for any
$j = 1, \ldots, m$, and for any $v \in \mathbb{G}$ we have
\begin{equation*}
d_{\mathbb{G}}f(x)[v] = \langle \nabla_{\mathbb{G}} f(x), \pi_{x} (v)\rangle_{x},
\end{equation*}
where the horizontal gradient $\nabla_\G f(x)$ is defined as
$$\nabla_\G f(x) \coloneqq \sum_{i = 1}^m X_i f(x) X_i(x).$$
We stress that the notion of the horizontal gradient only depends on the choice of the
generating horizontal vector fields
and hence it is uniquely determined by the sub-Riemannian metric chosen 
on $\Omega \subset \mathbb{G}$.
\fr}\end{rem}
Unless otherwise specified, by a Lipschitz function $f:\Omega\to \bbR$ we mean
a function that is Lipschitz with respect to the Carnot--Carath\'{e}odory distance
$d_{cc}$, namely there exists a constant $C \geq 0$ such that $\lvert f(x)-f(y) \rvert\leq C d_{cc}(x,y)$
for every $x,y\in\Omega$. Moreover, $f : \mathbb{G} \rightarrow \mathbb{R}$ is said to be a locally Lipschitz function if it is Lipschitz on every open bounded set $\Omega \subset \mathbb{G}$.
The notion of Pansu differentiability is motivated by the following result due to Pansu \cite{Pansu} (see also \cite{MPS} for a similar result in a more general setting). In general, it states that any Lipschitz map between two Carnot groups has almost everywhere a differential which is a homogeneous homomorphism.
Below we report the statement of Pansu's result only for real-valued Lipschitz maps, since this is sufficient for our purposes.

\begin{thm}\label{Pansu}
Let $\Omega\subset \bbG$ be an open subset. Then for every Lipschitz function $f:\Omega\to \bbR$ we have that $f$ is Pansu differentiable at $\mathcal{L}^n$-a.e.\ $x \in \Omega$.
\end{thm}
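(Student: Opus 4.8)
This is Pansu's differentiability theorem \cite{Pansu}; since the result is deep, the plan is only to outline the strategy, specialized to the real-valued case, which combines a Rademacher-type argument with a blow-up procedure. Denote by $C$ the Lipschitz constant of $f$ on $\Omega$. Fix $x\in\Omega$ and, for small $r>0$, consider the rescaled functions
\[
f_{x,r}(y)\coloneqq\frac{f(x\cdot\delta_r y)-f(x)}{r},
\]
defined on the open sets $\delta_{1/r}\big(\tau_{x^{-1}}\Omega\big)$, which increase to $\G$ as $r\searrow 0$. Using left-invariance and the dilation-homogeneity of $d_{cc}$, one checks that each $f_{x,r}$ is $C$-Lipschitz with respect to $d_{cc}$ and satisfies $f_{x,r}(e)=0$. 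Moreover, writing a generic point near $x$ as $y=x\cdot\delta_r w$, so that $y^{-1}\cdot x=\delta_r(w^{-1})$ and $d_{cc}(y,x)=r\,d_{cc}(w,e)$, Definition \ref{diff} unwinds to the statement that $f$ is Pansu differentiable at $x$ with differential a homogeneous homomorphism $L$ if and only if $f_{x,r}\to L$ locally uniformly on $\G$ as $r\searrow 0$. So it suffices to produce, for $\mathcal L^n$-a.e.\ $x\in\Omega$, such a limit.

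The first ingredient is a.e.\ \emph{horizontal} differentiability. Fix $j\in\{1,\dots,m\}$. Since $\{t\mapsto z\cdot\exp(te_j)\}_{z\in\G}$ are the integral curves of the left-invariant field $X_j$, and by Lemma \ref{lem:equiv_norm_dcc} one has $d_{cc}\big(z\cdot\exp(te_j),z\cdot\exp(se_j)\big)=d_{cc}\big(e,\exp((s-t)e_j)\big)\le c\,|s-t|$, the restriction of $f$ to each such curve is Lipschitz. These curves foliate $\G$, and the corresponding change of variables — being the flow of $X_j$, i.e.\ right translation by $\exp(te_j)$ — preserves Lebesgue measure; hence Fubini's theorem together with the one-dimensional Rademacher theorem yields that, for $\mathcal L^n$-a.e.\ $x$, the horizontal derivative $X_jf(x)\coloneqq\frac{d}{dt}\big|_{t=0}f\big(x\cdot\exp(te_j)\big)$ exists for every $j$, with $|X_jf|\le c\,C$. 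In particular $\nabla_\G f=(X_1f,\dots,X_mf)\in L^\infty$, so $\mathcal L^n$-a.e.\ $x$ is also a Lebesgue point of $\nabla_\G f$. At each such \emph{good point} $x$ we set $L_x[v]\coloneqq\langle\nabla_\G f(x),\pi_x(v)\rangle_x$ for $v\in\G$ (cf.\ Remark \ref{scalprod}), equivalently $L_x\big(\exp(\textstyle\sum_{i=1}^n y_iX_i)\big)=\sum_{j=1}^m X_jf(x)\,y_j$; since dilations act on $\g_1$ by scalar multiplication and, by the Campbell--Baker--Hausdorff formula, the first $m$ exponential coordinates of a product $y\cdot z$ are $y_j+z_j$, the map $L_x$ is a homogeneous homomorphism, with $X_jL_x\equiv X_jf(x)$ for all $j$.

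It remains to prove that, at a good point $x$, one has $f_{x,r}\to L_x$ locally uniformly: this is the substantive part. Given $y$ in a fixed compact set, choose a horizontal curve $\sigma\colon[0,1]\to\G$ from $e$ to $y$ that is almost length-minimizing, with canonical coordinates $h$ satisfying $|h|\le d_{cc}(e,y)+o_r(1)$. Then $t\mapsto x\cdot\delta_r\sigma(t)$ is a horizontal curve from $x$ to $x\cdot\delta_r y$ with canonical coordinates $r\,h(t)$ (because dilations scale the horizontal fields by $r$ and the $X_j$ are left-invariant), and it stays within distance $O(r)$ of $x$. Since $f$ is $C$-Lipschitz it is absolutely continuous along this curve, and $L_x$, being linear in exponential coordinates, is smooth; combining the fundamental theorem of calculus along the curve — the nontrivial point being that the derivative of $f$ there equals $\sum_j X_jf(\cdot)\,r\,h_j(\cdot)$, which holds for $\mathcal L$-a.e.\ choice of $\sigma$ — with $X_jL_x\equiv X_jf(x)$, one obtains
\[
\big|f_{x,r}(y)-L_x(y)\big|\le\int_0^1\big|\nabla_\G f\big(x\cdot\delta_r\sigma(t)\big)-\nabla_\G f(x)\big|\,|h(t)|\,{\rm d}t.
\]
The plan is then to show that the right-hand side tends to $0$ as $r\searrow 0$, \emph{uniformly in $y$ and in the choice of $\sigma$}.

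I expect this last step to be the main obstacle. It is \emph{not} a direct consequence of $x$ being a Lebesgue point of $\nabla_\G f$, because the integral is taken over a lower-dimensional curve; one genuinely needs a Hardy--Littlewood maximal-function (covering) estimate controlling averages of $|\nabla_\G f-\nabla_\G f(x)|$ along short horizontal curves contained in $B(x,O(r))$ — in practice allowing $\sigma$ to depend on $r$ so as to steer away from the shrinking bad set $\{|\nabla_\G f-\nabla_\G f(x)|>\varepsilon\}$, which requires a quantitative form of Chow--Rashevskii's theorem. This real-analytic core is precisely the content of Pansu's theorem, and for the details we refer to \cite{Pansu}. Granted it, one concludes $f_{x,r}\to L_x$ locally uniformly, that is, $f$ is Pansu differentiable at $\mathcal L^n$-a.e.\ $x\in\Omega$ with ${\rm d}_\G f(x)=L_x$.
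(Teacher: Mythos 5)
The paper does not actually prove Theorem~\ref{Pansu}: it is stated as a background result with citations to \cite{Pansu} (and to \cite{MPS} for a more general version), so there is no ``paper's own proof'' to compare against. What you have written is a proof \emph{outline}, not a complete proof, and you say so explicitly.

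As an outline it is accurate and well organized. The reformulation of Pansu differentiability as locally uniform convergence of the blow-ups $f_{x,r}$ to a homogeneous homomorphism is correct (writing $y=x\cdot\delta_r w$ turns the defining ratio into $-(f_{x,r}(w)-L_x(w))/d_{cc}(w,e)$). The Fubini-plus-one-dimensional-Rademacher argument does give a.e.\ existence of the horizontal partials $X_jf$ with the stated bound, and the candidate differential $L_x[\exp(\sum y_iX_i)]=\sum_{j\le m}X_jf(x)\,y_j$ is indeed a homogeneous homomorphism because the Campbell--Baker--Hausdorff formula only contributes bracket terms (hence terms in $\g_2\oplus\cdots$) beyond the sum of first-layer coordinates. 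You also correctly flag that the remaining step -- uniform smallness of the integral of $|\nabla_\G f(\cdot)-\nabla_\G f(x)|$ over short horizontal curves near $x$ -- is \emph{not} a consequence of $x$ being a Lebesgue point of $\nabla_\G f$ and requires the genuinely nontrivial covering/maximal-function argument at the heart of Pansu's proof; you defer that core to \cite{Pansu}, which is honest and appropriate.

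Two small cautions. First, since $\Omega$ is only an open subset and not all of $\G$, the Fubini argument for $X_jf$ should be carried out on exhausting compact subsets (the integral curves may leave $\Omega$); this is a routine localization but should be mentioned. Second, the inequality $|X_jf|\le cC$ is stated with the constant $c$ from Lemma~\ref{lem:equiv_norm_dcc}, but along the specific coordinate direction $e_j$ one has $d_{cc}(e,\exp(te_j))=|t|\,d_{cc}(e,\exp(e_j))$ by homogeneity, so the sharper bound $|X_jf|\le C\,d_{cc}(e,\exp(e_j))$ holds; this does not affect anything, but the way you invoked Lemma~\ref{lem:equiv_norm_dcc} slightly obscures where the constant comes from. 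Overall: since the paper cites rather than proves this theorem, your sketch adds genuine content, correctly identifies the structure of Pansu's argument specialized to the real-valued case, and locates the hard step; as a proof it is necessarily incomplete, but you say so.
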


Let $x \in \mathbb{G}$ and
$\bar{v} \in \mathfrak{g}_1$, then 
the map $ t \mapsto x \cdot \delta_t \exp(\bar{v})$ is Lipschitz. Hence,
if $f : \mathbb{G} \rightarrow \mathbb{R}$ is Lipschitz, then the composition $t \mapsto 
f(x \cdot \delta_t \exp(\bar{v}))$ is a Lipschitz mapping from $\mathbb{R}$ to itself, hence Pansu differentiable.
By Definition \ref{diff}, Remark \ref{scalprod}, and \cite[Theorem 4.6]{PS2}, it is easy to verify that
\begin{equation}\label{diff2}
\langle \nabla_{\mathbb{G}}f(x), \pi_x(v) \rangle =
\lim_{t \rightarrow 0}  \frac{f(x \cdot \delta_t \,e^{\bar{v}}) - f(x)}{t}
\quad 
\mbox{ for a.e.\ }x\in\G\mbox{ and for every } v  \in H_x \mathbb{G},
\end{equation}
where $\bar{v} = {\rm d}_x \tau_{x^{-1}} [v]$.

\subsection{Sub-Finsler Metrics and Duality}\label{s:subFins_duality}
Inspired by \cite{Vent}, now we introduce the following definition. 
Let $\mathbb{G}$  be a Carnot metric group and let $\Omega\subset \mathbb{G}$ be an open set. If $\alpha \geq 1$, 
we introduce the family $\mathcal{D}_{cc}(\Omega)$ containing all the geodesic distances $d: \Omega \times \Omega \rightarrow[0,+\infty)$ verifying
\begin{align}\label{ineq}
\frac{1}{\alpha} d_{cc}(x, y) \leq d(x, y) \leq \alpha d_{cc}(x, y)\qquad \forall x,y \in \Omega.
\end{align}
Therefore, $\mathcal D_{cc}(\Omega)$ depends on $\alpha$
and we omit such dependence for the sake of brevity. 
Notice that we may have $\mathcal{D}_{cc}(\Omega) = \emptyset$ if the domain $\Omega \subset \mathbb{G}$ is disconnected or it has an irregular boundary.
We endow $\mathcal{D}_{cc}(\Omega)$ with the topology of
the uniform convergence on compact subsets of $\Omega \times \Omega$.
We will see in the Proof of Theorem \ref{gammaconvergence} that 
$\mathcal{D}_{cc}(\Omega)$ is compact with respect to such topology.

\begin{defn} \label{defF} For $\alpha \geq 1$, we define $\mathcal{M}_{cc}^{\alpha}(\mathbb{G})$ as the family of
all those maps $\varphi : H \mathbb{G}\rightarrow [0, + \infty)$, that we will call
metrics on $H\G$, verifying the following properties:
\begin{itemize}
\item[(1)] \(\varphi:H\G\to\mathbb R\) is Borel measurable,
where \(H\G\) is endowed with the product $\sigma$-algebra;
\item[(2)] $\varphi(x, \delta_{\lambda}^* v) = \lvert \lambda \rvert \varphi(x, v)$ for every $(x,v)\in H\mathbb{G}$ and $\lambda \in \mathbb{R}$;
\item[(3)] $\frac{1}{\alpha} \|v\|_x \leq \varphi(x, v) \leq \alpha \|v\|_x$ for every $(x, v) \in H \mathbb{G}$.
\end{itemize}
Moreover, we will say that $\varphi \in \mathcal{M}_{cc}^\alpha(\mathbb{G})$ is a \emph{sub-Finsler convex metric} if
\begin{align} 
\varphi(x,v_1 + v_2)\leq \varphi(x,v_1)+\varphi(x,v_2)
\end{align}
for every $x\in\mathbb{G}$ and $v_1, v_2\in H_x \mathbb{G}$
(or equivalently if $\varphi(x,\cdot)$ is a norm for every $x\in\G$).
\end{defn}
According to the preliminaries,
conditions $(2)$ and $(3)$ are well-defined with respect to the exponential and
the dilation map.
Moreover, let us remark that condition $(1)$ is equivalent to the Borel measurability with respect to 
the product space $\mathbb{G} \times \mathfrak{g}$.
\medskip

Our next aim is to introduce the dual of a metric belonging to $\mathcal M_{cc}^{\alpha}(\mathbb{G})$.

\begin{defn}[Dual Metric] Let us take $\varphi \in \mathcal{M}_{cc}^{\alpha}(\mathbb{G})$. We define 
the \emph{dual metric} $\varphi^{\star} : H \mathbb{G} \rightarrow[0,+\infty)$ of  $\varphi$ as
\begin{equation}\label{dual}
\varphi^{\star}(x, v) \coloneqq
\sup \Biggl\{    
\frac{\lvert \langle v, w \rangle_x \rvert}
{\varphi (x, w)} \; : \; w \in H_x\mathbb{G}, \; w \neq 0
\Biggr\}.
\end{equation}
\end{defn}

In general, the dual metric enjoys many useful properties, as we can see below.
\begin{prop}\label{properties} For any\, $\varphi \in \mathcal{M}_{cc}^{\alpha}(\mathbb{G})$, 
it holds that $\varphi^{\star}$ is a sub-Finsler convex metric,
and in particular
\begin{equation}\label{ineq_dual}
\frac{1}{\alpha} \|v \|_x \leq \varphi^{\star}(x, v) \leq \alpha \| v \|_x\quad\mbox{for every }(x,v)\in H\G.
\end{equation}
\end{prop}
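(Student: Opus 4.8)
The plan is to verify the four defining properties of a sub-Finsler convex metric for $\varphi^\star$, together with the quantitative bound \eqref{ineq_dual}. For the \emph{measurability} (property (1)), I would observe that for each fixed $w\in H_e\G$, the map $(x,v)\mapsto |\langle v, \mathrm{d}_e\tau_x[w]\rangle_x|/\varphi(x,\mathrm{d}_e\tau_x[w])$ is Borel (using the triviality of $T\G$, the smoothness of left translations, and the measurability of $\varphi$), and then pass to the supremum over a countable dense set of directions $w$ on the unit sphere of $H_e\G$; continuity of each such map in $v$ (and, after normalization, in $w$) justifies that the sup over this countable set equals the sup over all $w\neq 0$, so $\varphi^\star$ is a countable supremum of Borel functions, hence Borel. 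For the \emph{homogeneity} (property (2)), using $\delta^\star_\lambda v=\lambda v$ on the horizontal fibers together with bilinearity of $\langle\cdot,\cdot\rangle_x$, one has $|\langle \lambda v,w\rangle_x|/\varphi(x,w)=|\lambda|\,|\langle v,w\rangle_x|/\varphi(x,w)$ for every $w\neq 0$, and taking the supremum gives $\varphi^\star(x,\delta^\star_\lambda v)=|\lambda|\varphi^\star(x,v)$.

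For the \emph{sublinearity} (property (4)), the key point is that the numerator $v\mapsto|\langle v,w\rangle_x|$ is itself subadditive for each fixed $w$, so $|\langle v_1+v_2,w\rangle_x|/\varphi(x,w)\leq |\langle v_1,w\rangle_x|/\varphi(x,w)+|\langle v_2,w\rangle_x|/\varphi(x,w)\leq\varphi^\star(x,v_1)+\varphi^\star(x,v_2)$; taking the supremum over $w\neq 0$ on the left yields $\varphi^\star(x,v_1+v_2)\leq\varphi^\star(x,v_1)+\varphi^\star(x,v_2)$. Combined with the positive homogeneity from (2), this shows $\varphi^\star(x,\cdot)$ is a seminorm; once \eqref{ineq_dual} is established it is in fact a norm, and all of $\mathcal M^\alpha_{cc}(\G)$'s requirements (including (3)) are subsumed.

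The main work — and the only genuinely non-trivial step — is the two-sided bound \eqref{ineq_dual}, i.e.\ property (3) for $\varphi^\star$. For the upper bound, I would use property (3) for $\varphi$, namely $\varphi(x,w)\geq\frac1\alpha\|w\|_x$, together with Cauchy--Schwarz $|\langle v,w\rangle_x|\leq\|v\|_x\|w\|_x$, to get $|\langle v,w\rangle_x|/\varphi(x,w)\leq\alpha\|v\|_x\|w\|_x/\|w\|_x=\alpha\|v\|_x$ for every $w\neq 0$, hence $\varphi^\star(x,v)\leq\alpha\|v\|_x$. For the lower bound, given $v\neq 0$ I would simply test the supremum in \eqref{dual} with the choice $w=v$: then $|\langle v,v\rangle_x|/\varphi(x,v)=\|v\|_x^2/\varphi(x,v)\geq\|v\|_x^2/(\alpha\|v\|_x)=\frac1\alpha\|v\|_x$, where I used $\varphi(x,v)\leq\alpha\|v\|_x$ from property (3) for $\varphi$; the case $v=0$ being trivial, this gives $\varphi^\star(x,v)\geq\frac1\alpha\|v\|_x$. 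In particular $\varphi^\star(x,\cdot)$ never vanishes off the origin, so the seminorm from the previous paragraph is a genuine norm, completing the proof. The only subtlety worth spelling out carefully is the measurability argument, since it must be phrased in terms of the product $\sigma$-algebra on $H\G$ (equivalently on $\G\times\g$) as in Definition \ref{defF}; everything else is a short direct computation.
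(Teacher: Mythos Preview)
Your proposal is correct and follows essentially the same approach as the paper: verify homogeneity and subadditivity directly from the definition of $\varphi^\star$ as a supremum, derive the two-sided bound \eqref{ineq_dual} from property~(3) of $\varphi$, and obtain Borel measurability by rewriting $\varphi^\star$ as a countable supremum over a dense set of directions $w$ in $H_e\G$. The only cosmetic difference is that for the lower bound you test the specific choice $w=v$, whereas the paper bounds $\frac{|\langle v,w\rangle_x|}{\varphi(x,w)}\geq\frac{1}{\alpha}\frac{|\langle v,w\rangle_x|}{\|w\|_x}$ and then takes the supremum over all $w$; these amount to the same computation.
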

\proof 
It is straightforward to prove property $(2)$
since for every $v, w \in H_x \mathbb{G}$ and $\lambda \in \mathbb{R}$ we have that
$
\langle \delta_{\lambda}^* v, w\rangle_x = \lambda \langle 
v, w \rangle_x.
$
Passing to the supremum over all $w \in H_x \mathbb{G} \setminus \{0\}$, we obtain that $\varphi^{\star}(x, \delta_{\lambda}^* v) = \lvert \lambda \rvert\varphi^{\star}(x,v) $.
In order to prove the convexity on the horizontal bundle, let us consider $v_1, v_2 \in H_x \mathbb{G}$, then we obtain:
\begin{align*}
\varphi^{\star}(x, v_1 + v_2) 
&\leq \sup \Biggl\{ 
\frac{\lvert \langle v_1, w \rangle_x \rvert} 
{\varphi (x, w)} + \frac{ \lvert \langle v_2, w \rangle_x \rvert} 
{\varphi (x, w)}
\; :\; w \in H_x \mathbb{G}, \; w \neq 0\Biggr\} \\
&\leq  \varphi^{\star}(x, v_1) + \varphi^{\star}(x, v_2).
\end{align*}
Moreover, if we take  $w \in H_x\mathbb{G}\setminus\{0\}$
it holds that 
\[ 
\frac{1}{\alpha} \frac{\lvert \langle v, w \rangle_x \rvert}{\| w \|_x}
\leq
\frac{\lvert \langle v, w \rangle_x \lvert}{\varphi(x, w)} \leq
\alpha \frac{\lvert \langle v, w \rangle_x \rvert}{\| w \|_x}.
\]
By taking the supremum over all $w \in H_x \mathbb{G}\setminus\{0\}$,
we obtain \eqref{ineq_dual} and accordingly
property $(3)$ of Definition \ref{defF}.
Therefore, $\varphi^{\star}(x, \cdot)$ is a norm, thus in particular it is
continuous. Finally, chosen a dense sequence \((w_n)_n\) in $\mathfrak{g}_1\setminus\{0\}$,
we have that $({\rm d}_e\tau_x[w_n])_n$ is dense in $H_x\G$ for every $x\in\G$,
thus for any $v\in\mathfrak g_1$ we can write
\[
\varphi^\star(x,{\rm d}_e\tau_x[v])=\sup_{n\in\mathbb N}
\frac{\lvert\langle{\rm d}_e\tau_x[v],{\rm d}_e\tau_x[w_n]\rangle_x\rvert}
{\varphi(x,{\rm d}_e\tau_x[w_n])}=\sup_{n\in\mathbb N}
\frac{\lvert\langle v,w_n\rangle_e\rvert}{\varphi(x,{\rm d}_e\tau_x[w_n])}
\]
for every $x\in\G$, which shows that $\G\ni x\mapsto\varphi^\star(x,{\rm d}_e\tau_x[v])$
is measurable and accordingly property $(1)$ of Definition \ref{defF} is satisfied.
All in all, $\varphi^\star$ is a sub-Finsler convex metric.
\endproof

We can characterize sub-Finsler convex metrics $\varphi$ in terms of the bidual metric $\varphi^{\star \star}$.
\begin{prop}\label{bidual} Let us consider $\varphi \in \mathcal{M}_{cc}^\alpha(\mathbb{G})$.
Then $\varphi$ is a sub-Finsler convex metric if and only if   
$\varphi(x, v)= \varphi^{\star \star}(x, v)$ for every $(x, v) \in H \mathbb{G}$.
\end{prop}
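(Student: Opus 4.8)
The plan is to reduce everything to a fiberwise statement on $(H_x\mathbb{G},\langle\cdot,\cdot\rangle_x)$ and then invoke a supporting‑hyperplane argument; this is the Carnot‑group analogue of the classical fact that a (finite, absolutely $1$‑homogeneous) function on $\mathbb{R}^n$ equals its biconjugate if and only if it is convex. The \emph{only if} implication is essentially free: by Proposition~\ref{properties} the dual metric $\varphi^\star$ belongs to $\mathcal{M}_{cc}^\alpha(\mathbb{G})$, hence applying Proposition~\ref{properties} once more to $\varphi^\star$ shows that $\varphi^{\star\star}=(\varphi^\star)^\star$ is a sub‑Finsler convex metric; so if $\varphi=\varphi^{\star\star}$, then $\varphi$ is itself a sub‑Finsler convex metric. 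It remains to prove the \emph{if} implication.

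So assume $\varphi$ is a sub‑Finsler convex metric, fix $x\in\mathbb{G}$, and write $N:=\varphi(x,\cdot)$, which by assumption is a norm on $H_x\mathbb{G}$ with $\frac1\alpha\|\cdot\|_x\le N\le\alpha\|\cdot\|_x$. The first step is the easy inequality $\varphi^{\star\star}(x,v)\le\varphi(x,v)$. From the very definition of $\varphi^\star$ one gets the pairing inequality $|\langle v,w\rangle_x|\le\varphi(x,v)\,\varphi^\star(x,w)$ for all $v,w\in H_x\mathbb{G}$ (take $u=v$ in the supremum defining $\varphi^\star(x,w)$); dividing by $\varphi^\star(x,w)$, which is positive for $w\ne 0$ since $\varphi^\star(x,w)\ge\frac1\alpha\|w\|_x$ by \eqref{ineq_dual}, and taking the supremum over $w\ne 0$ yields $\varphi^{\star\star}(x,v)\le\varphi(x,v)$.

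The second, main step is the reverse inequality $\varphi(x,v)\le\varphi^{\star\star}(x,v)$, and here the finite dimensionality of $H_x\mathbb{G}$ is used. The set $B:=\{w\in H_x\mathbb{G}:N(w)\le 1\}$ is a compact, convex, symmetric body with the origin in its interior. Given $v\ne 0$, the point $v/N(v)$ lies on $\partial B$, so by the Hahn–Banach separation theorem there is a supporting hyperplane to $B$ there; writing the corresponding linear functional via Riesz representation as $w\mapsto\langle w,w_0\rangle_x$ for a suitable $w_0\in H_x\mathbb{G}$, we have $\langle v,w_0\rangle_x=N(v)$ and $\langle w,w_0\rangle_x\le 1$ for all $w\in B$. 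By $1$‑homogeneity (applied to $w$ and $-w$, using $N(-w)=N(w)$) the latter means $|\langle w,w_0\rangle_x|\le N(w)$ for all $w\in H_x\mathbb{G}$, i.e.\ $\varphi^\star(x,w_0)\le 1$. Plugging $w_0$ into the supremum defining $\varphi^{\star\star}$ gives
\[
\varphi^{\star\star}(x,v)\ge\frac{|\langle v,w_0\rangle_x|}{\varphi^\star(x,w_0)}\ge N(v)=\varphi(x,v).
\]
Together with the first step and the trivial case $v=0$, this proves $\varphi(x,v)=\varphi^{\star\star}(x,v)$ for every $(x,v)\in H\mathbb{G}$.

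I do not expect a serious obstacle: the only non‑formal ingredient is the existence of a supporting functional to the unit ball of the norm $N$ on the finite‑dimensional fiber $H_x\mathbb{G}$ (equivalently, that a closed convex set is the intersection of the closed half‑spaces containing it), and all the rest is bookkeeping with absolute $1$‑homogeneity and the two‑sided bounds coming from \eqref{ineq_dual} and Proposition~\ref{properties}.
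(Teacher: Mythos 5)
Your proof is correct, and its substance matches the paper's. One small presentational slip: you have the labels ``if'' and ``only if'' swapped (the ``if'' half of ``$P$ if and only if $Q$'' is $Q\Rightarrow P$, i.e.\ $\varphi=\varphi^{\star\star}\Rightarrow\varphi$ convex, which is the half you call ``essentially free''); the mathematics underneath is fine in both halves, only the names are transposed. For the direction $\varphi=\varphi^{\star\star}\Rightarrow\varphi$ convex, you invoke Proposition~\ref{properties} twice to see that $\varphi^{\star\star}=(\varphi^\star)^\star$ is automatically a sub-Finsler convex metric, whereas the paper verifies subadditivity of $\varphi^{\star\star}(x,\cdot)$ by a direct computation; these are two phrasings of the same observation (duals are always convex). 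For the direction $\varphi$ convex $\Rightarrow\varphi=\varphi^{\star\star}$, the paper simply cites the classical finite-dimensional fact that a norm equals its bidual norm, while you actually reprove it: first the easy inequality $\varphi^{\star\star}\le\varphi$ from the pairing estimate $|\langle v,w\rangle_x|\le\varphi(x,v)\varphi^\star(x,w)$, then the reverse inequality by producing, via Hahn--Banach and Riesz on the fiber, a supporting functional $w_0$ with $\langle v,w_0\rangle_x=\varphi(x,v)$ and $\varphi^\star(x,w_0)\le1$. This buys you a self-contained argument at the cost of a few extra lines; there is no gap, and your use of compactness/finite-dimensionality of $H_x\G$ to get the supporting hyperplane is exactly where the paper's appeal to the classical result also implicitly uses finite dimensionality.
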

\begin{proof}
\ \\
\boxed{\Leftarrow} Since $\varphi$ is $1$-homogeneous in the second entry, in order to prove that $\varphi(x,\cdot)$ is convex on 
$H_x \mathbb{G}$, it is sufficient to prove that $\varphi(x, v_1+v_2)\leq \varphi (x, v_1)+\varphi(x, v_2)$ for every $x\in\G$ and $v_1,v_2\in H_x \mathbb{G}$.
By assumption $\varphi(x, v)= \varphi^{\star\star}(x, v)$ for all $(x, v) \in H \mathbb{G}$, then for every $v_1, v_2 \in  H_x \mathbb{G}$  we can write
\begin{align*}
\varphi(x, v_1 + v_2) &= \varphi^{\star\star}(x, v_1 + v_2)\\
&\leq \sup \Biggl\{ 
\frac{\lvert \langle v_1, w \rangle_x \rvert} 
{\varphi^{\star}(x, w)} + \frac{\lvert \langle v_2, w \rangle_x \lvert} 
{\varphi^{\star}(x, w)}
\; :\; w \in H_x \mathbb{G}, \; w \neq 0\Biggr\} \\
&\leq \varphi^{\star\star}(x, v_1) + \varphi^{\star \star}(x, v_2)\\
&=\varphi(x, v_1) + \varphi(x, v_2).
\end{align*}
\boxed{\Rightarrow} Given that $\varphi(x,\cdot)$ is convex and $1$-homogeneous, $\varphi(x,\cdot)$ is a norm on $H_x \mathbb{G}$ and $\varphi^{\star \star}(x,\cdot)$ represents its bidual norm. Since each horizontal fiber is finite-dimensional, the conclusion follows.
\end{proof}

At the end, we present the following properties that we will need in the last section.
\begin{lem}\label{semic}
Let \(\varphi\in\mathcal M_{cc}^\alpha(\mathbb G)\) be a sub-Finsler convex metric.
Then the following hold:
\begin{itemize}
\item[\(\rm i)\)] If \(\varphi\) is lower semicontinuous, then
\(\varphi^\star\) is upper semicontinuous.
\item[\(\rm ii)\)] If \(\varphi\) is upper semicontinuous, then
\(\varphi^\star\) is lower semicontinuous.
\end{itemize}
In particular, if \(\varphi\) is continuous, then \(\varphi^\star\)
is continuous.
\end{lem}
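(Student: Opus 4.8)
The plan is to reduce the semicontinuity of $\varphi^\star$ — which is defined through a supremum over the $x$-dependent fibre $H_x\G$ — to a supremum over the fixed compact unit sphere of $\g_1$ by passing through the trivialization of the tangent bundle, and then to treat the two directions separately: the lower semicontinuous one is a soft consequence of "a supremum of lower semicontinuous functions is lower semicontinuous", whereas the upper semicontinuous one requires a compactness argument, since a supremum of upper semicontinuous functions need not be upper semicontinuous.

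\emph{Step 1 (trivialization).} The isomorphism $T\G\cong\G\times\g$, $(x,v)\mapsto(x,{\rm d}_x\tau_{x^{-1}}[v])$, restricts to a homeomorphism $H\G\cong\G\times\g_1$, under which (lower and upper) semicontinuity is preserved. Setting $\tilde\varphi(x,\bar v):=\varphi(x,{\rm d}_e\tau_x[\bar v])$ and using $\langle{\rm d}_e\tau_x[\bar v],{\rm d}_e\tau_x[\bar w]\rangle_x=\langle\bar v,\bar w\rangle_e$ together with \eqref{left}, I would rewrite
\[
\varphi^\star\big(x,{\rm d}_e\tau_x[\bar v]\big)=\sup\Big\{\frac{|\langle\bar v,\bar w\rangle_e|}{\tilde\varphi(x,\bar w)}\ :\ \bar w\in S\Big\}=:\widetilde{\varphi^\star}(x,\bar v),
\]
where $S$ is the unit sphere of $(\g_1,\norm{\cdot}_e)$; the restriction from $\g_1\setminus\{0\}$ to $S$ uses $1$-homogeneity of $\tilde\varphi(x,\cdot)$, and property (3) of Definition \ref{defF} together with \eqref{left} gives $1/\alpha\le\tilde\varphi(x,\bar w)\le\alpha$ for all $(x,\bar w)\in\G\times S$, so $\tilde\varphi$ is bounded and bounded away from zero there. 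Since $\tilde\varphi$ is l.s.c.\ (resp.\ u.s.c.) on $\G\times\g_1$ exactly when $\varphi$ is, and likewise for $\widetilde{\varphi^\star}$ versus $\varphi^\star$, it suffices to argue with the tilded objects.

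\emph{Step 2 (the two directions).} Consider $F(x,\bar v,\bar w):=|\langle\bar v,\bar w\rangle_e|\,\tilde\varphi(x,\bar w)^{-1}$ on $(\G\times\g_1)\times S$, a product of the jointly continuous nonnegative factor $(\bar v,\bar w)\mapsto|\langle\bar v,\bar w\rangle_e|$ and the nonnegative factor $(x,\bar w)\mapsto\tilde\varphi(x,\bar w)^{-1}$, which is bounded on $\G\times S$. If $\varphi$ is u.s.c., then $\tilde\varphi^{-1}$ is l.s.c.\ on $\G\times S$, hence $F$ is jointly l.s.c., hence $\widetilde{\varphi^\star}=\sup_{\bar w\in S}F(\cdot,\cdot,\bar w)$ is l.s.c.\ — this proves (ii). If instead $\varphi$ is l.s.c., then $\tilde\varphi^{-1}$ is u.s.c.\ and bounded on $\G\times S$, so $F$ is jointly u.s.c.\ on $(\G\times\g_1)\times S$; to conclude that $\widetilde{\varphi^\star}$ is u.s.c.\ I would fix $(x_n,\bar v_n)\to(x_0,\bar v_0)$, choose $\bar w_n\in S$ attaining the maximum $\widetilde{\varphi^\star}(x_n,\bar v_n)=F(x_n,\bar v_n,\bar w_n)$ (the maximum is attained since $F(x_n,\bar v_n,\cdot)$ is u.s.c.\ on the compact set $S$, equivalently since the dual of the norm $\tilde\varphi(x_n,\cdot)$ on the finite-dimensional space $\g_1$ is attained), extract a subsequence with $\bar w_n\to\bar w_0\in S$, and use joint upper semicontinuity of $F$:
\[
\limsup_n\widetilde{\varphi^\star}(x_n,\bar v_n)=\limsup_n F(x_n,\bar v_n,\bar w_n)\le F(x_0,\bar v_0,\bar w_0)\le\widetilde{\varphi^\star}(x_0,\bar v_0).
\]
This gives (i), and the last assertion follows because a continuous $\varphi$ is both u.s.c.\ and l.s.c.

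\emph{Main obstacle.} The only genuinely non-formal point is the upper semicontinuous direction: the supremum defining $\varphi^\star$ ranges over infinitely many directions and a supremum of u.s.c.\ functions is in general not u.s.c., so one must exploit that this supremum is attained and that the index set $S$ is compact — that is, one must use finite-dimensionality of the horizontal fibres. Everything else (the trivialization, and the elementary stability of semicontinuity under products with finite-valued continuous factors and under suprema of l.s.c.\ families) is routine.
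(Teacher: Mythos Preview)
Your proof is correct and follows essentially the same approach as the paper: for (ii) you both observe that $\varphi^\star$ is a supremum of lower semicontinuous functions, and for (i) you both run the compactness argument of extracting a convergent sequence of (near-)maximizers on the unit sphere of the horizontal fibre. The only cosmetic differences are that you make the trivialization $H\G\cong\G\times\g_1$ explicit and normalize the maximizers on the $\norm{\cdot}_e$-sphere, whereas the paper works directly on $H\G$ and normalizes via $\varphi(x_n,w_n)=1$.
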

\begin{proof}
To prove i) suppose \(\varphi\) is lower semicontinuous. Fix
$(x,v) \in H \mathbb{G}$ and $(x_n,v_n)\in H\mathbb G\) such that \((x_n,v_n)\to(x,v)\),
in the sense that $ d_{cc}(x_n, x) +\big\|{\rm d}_{x_n}\tau_{x_n^{-1}}[v_n] - {\rm d}_x\tau_{x^{-1}}[v] \big\|_e \to 0$.
Possibly passing to a not relabeled subsequence, we can assume
that \(\limsup_n\varphi^\star(x_n,v_n)\) is actually a limit.
Given any \(n\in\mathbb{N}\), there exists \(w_n\in H_{x_n}\mathbb G\) such that
\(\varphi(x_n,w_n)=1\) and \(\varphi^{\star}(x_n,v_n)=\lvert \langle v_n,w_n\rangle_{x_n} \rvert \).
Since the fiber of the horizontal bundle
is compact, then
there exists \(w\in H_x\mathbb G\) such that (up to a not
relabeled subsequence) \((x_n,w_n)\to(x,w)\). Being \(\varphi\) lower semicontinuous,
we deduce that
$$\varphi(x, w) \leq \liminf_{n \rightarrow \infty} \varphi(x_n, w_n) \leq 1.$$
Therefore, we conclude that
\[
\varphi^\star(x,v)\geq\frac{\lvert \langle v,w\rangle_x \rvert}{\varphi(x,w)}
\geq\lim_{n\to\infty} \lvert \langle v_n,w_n\rangle_{x_n} \rvert=
\limsup_{n\to\infty}\varphi^\star(x_n,v_n),
\]
which proves that \(\varphi^\star\) is upper semicontinuous.\\
The assertion ii) can be proved
noticing that if \(\varphi\) is upper semicontinuous, then \(\varphi^\star\) is lower semicontinuous
as it can be expressed as a supremum of lower semicontinuous functions.
\end{proof}
\begin{notat}{\rm
For any $d \in \mathcal{D}_{cc}(\Omega)$ and $a\in \Omega$, we denote by
$d_a : \Omega \rightarrow [0,+\infty)$ the fixed-point distance map $d_a(x) \coloneqq d(a, x)$.
Clearly, $d_a$ is a Lipschitz function
and, by Theorem \ref{Pansu}, $d_a$  is Pansu differentiable for a.e.\ $x\in \Omega$. 
We denote with $\Lip([0,1], \Omega)$ the class of all Lipschitz continuous curves $\gamma : [0,1] \rightarrow \Omega$
and with $\mathcal{H}([0,1], \Omega)$ the set of horizontal curves. In the sequel, sometimes we omit the unit interval since we are going to consider curves defined on and we assume that such curves are parametrized with constant velocity. 
Moreover, for every Lebesgue null set $N\subset \Omega$, we set $\mathcal{P}(\Omega, N)$
as the class of all horizontal curves $ \gamma: [0,1]\to  \Omega$ such that 
\[\mathcal{L}^1(\left\{t \in [0,1]\; | \;\gamma(t) \in N\right\}) = 0,\] where $\mathcal{L}^1$ is the standard $1$-dimensional Lebesgue measure. By \cite[Lemma 2.2]{DCP4} we have that $\mathcal{P}(\Omega, N) \neq \emptyset$. 
Furthermore, with $H\Omega$ we mean the restriction of the horizontal bundle $H\G$
to $\Omega$, i.e.,
\[
H\Omega:=\{(x,v)\in H\G:x\in\Omega\}.
\]
If not otherwise stated, for every $v \in T_x \G$ and $x \in \G$ sometimes we will denote by $\bar{v} \coloneqq {\rm d}_x \tau_{x^{-1}}[v]$ the representative vector of $T_x \mathbb{G} \ni v$ in the Lie algebra $\mathfrak{g}$.
\fr}\end{notat}

\section{Metric Derivative and Length Representation Theorem}
Given a geodesic distance $d \in \mathcal{D}_{cc}(\G)$, it is quite natural to consider the associated metric given by differentiation. 
The next definition is inspired by the ones proposed in \cite{Pauls,Vent} but, in our setting, we necessarily have to 
define it only on the horizontal bundle $H \G$. 
\begin{defn}[Metric derivative] Given any $d \in \mathcal{D}_{cc}(\G)$, we define the map $\varphi_d : H \mathbb{G} \rightarrow [0, +\infty)$ as
$$\varphi_d(x, v) \coloneqq \limsup_{t \rightarrow 0}
\frac{d(x, x \cdot \delta_t \exp{{\rm d}_x\tau_{x^{-1}}[v])}}{\lvert t \rvert}\quad\mbox{for every }(x,v)\in H\G.$$
\end{defn}
Note that we translate the vector $v \in H_x \mathbb{G}$ to $e$ via the differential of the left traslation,
because the exponential map is defined on 
the first stratum $\mathfrak{g}_1=H_e\G$. The next Lemma tells us that the metric derivative is actually a metric.
\begin{lem}
For every $d \in \mathcal{D}_{cc}(\mathbb{G})$ we have that 
$\varphi_d\in \mathcal{M}_{cc}^{c \alpha}(\mathbb{G})$, for some $c \geq 1$
independent of $d$.
\end{lem}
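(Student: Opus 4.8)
The plan is to verify the three defining properties of $\mathcal M_{cc}^{c\alpha}(\mathbb G)$ from Definition \ref{defF} in order, with $c$ the biLipschitz constant provided by Lemma \ref{lem:equiv_norm_dcc}. The cleanest order is: first the homogeneity (property (2)), then the two-sided bound (property (3)), and finally Borel measurability (property (1)), since the measurability argument is the delicate one and it is convenient to already know the pointwise structure of $\varphi_d$.

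\emph{Homogeneity.} Fix $(x,v)\in H\mathbb G$ and $\lambda\in\mathbb R\setminus\{0\}$; write $\bar v={\rm d}_x\tau_{x^{-1}}[v]\in\mathfrak g_1$, so that $\overline{\delta^*_\lambda v}=\lambda\bar v$ by the convention recalled before Definition \ref{horizontalcurve}. Using \eqref{eq:exp_on_horizontal}, $x\cdot\delta_t\exp(\lambda\bar v)=x\cdot\delta_t\exp\big(\delta^*_{|\lambda|}(\mathrm{sgn}(\lambda)\bar v)\big)=x\cdot\delta_{|\lambda|t}\exp(\mathrm{sgn}(\lambda)\bar v)$. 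Substituting $s=|\lambda|t$ in the limsup and using that $s\searrow 0$ iff $t\searrow 0$ (and that the definition is symmetric in the sign of $t$ via the $|t|$ in the denominator and the sign flip absorbed into $\bar v$), one gets $\varphi_d(x,\delta^*_\lambda v)=|\lambda|\,\varphi_d(x,v)$; the case $\lambda=0$ is trivial since $\exp(0)=e$ gives $d(x,x)=0$.

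\emph{Two-sided bound.} By the defining inequality \eqref{ineq} for $d\in\mathcal D_{cc}(\Omega)$,
\[
\frac{1}{\alpha}\,\frac{d_{cc}\big(x,x\cdot\delta_t\exp\bar v\big)}{|t|}
\leq\frac{d\big(x,x\cdot\delta_t\exp\bar v\big)}{|t|}
\leq\alpha\,\frac{d_{cc}\big(x,x\cdot\delta_t\exp\bar v\big)}{|t|}.
\]
By left invariance of $d_{cc}$, $d_{cc}(x,x\cdot\delta_t\exp\bar v)=d_{cc}(e,\delta_t\exp\bar v)=d_{cc}(e,\exp(t\bar v))$ using \eqref{eq:exp_on_horizontal} again, and $1$-homogeneity of $d_{cc}$ gives this equals $|t|\,d_{cc}(e,\exp(\mathrm{sgn}(t)\bar v))$. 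Hence $\varphi_d(x,v)$ is squeezed between $\frac{1}{\alpha}d_{cc}(e,\exp\bar v)$-type and $\alpha\,d_{cc}(e,\exp\bar v)$-type quantities; applying Lemma \ref{lem:equiv_norm_dcc} to pass from $d_{cc}(e,\exp\bar v)$ to $\|\bar v\|_e$, and then \eqref{left} to pass from $\|\bar v\|_e$ to $\|v\|_x$, yields
\[
\frac{1}{c\alpha}\,\|v\|_x\leq\varphi_d(x,v)\leq c\alpha\,\|v\|_x,
\]
which is property (3) with constant $c\alpha$ (and in particular $\varphi_d$ is real-valued, finite). Note the limsup causes no problem: the bounds hold for every $t$, so they pass to the limsup.

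\emph{Measurability — the main obstacle.} This is the step requiring care, since a limsup of measurable functions is measurable only when taken over a countable index set. The plan is to replace the continuous-parameter limsup by a countable one: show $\varphi_d(x,v)=\limsup_{k\to\infty}\,\sup_{0<|t|\leq 1/k,\ t\in\mathbb Q}\frac{d(x,x\cdot\delta_t\exp\bar v)}{|t|}$, i.e. that rational $t$ suffice. This requires continuity of $t\mapsto d(x,x\cdot\delta_t\exp\bar v)/|t|$ for $t\neq 0$, which follows because $d$ is continuous (being biLipschitz-equivalent to $d_{cc}$, hence continuous on $\Omega\times\Omega$) and $t\mapsto x\cdot\delta_t\exp\bar v$ is continuous. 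Once restricted to countably many $t$, it suffices that for each fixed $t\in\mathbb Q\setminus\{0\}$ the map $(x,v)\mapsto d\big(x,x\cdot\delta_t\exp({\rm d}_x\tau_{x^{-1}}[v])\big)/|t|$ is Borel on $H\mathbb G$ (equivalently, identifying $H\mathbb G$ with $\mathbb G\times\mathfrak g_1$, on $\mathbb G\times\mathfrak g_1$): this composition is built from the smooth maps $(x,v)\mapsto {\rm d}_x\tau_{x^{-1}}[v]=\bar v$, $\bar v\mapsto\exp\bar v$, $(x,\bar v)\mapsto x\cdot\delta_t\exp\bar v$, the diagonal embedding in the first slot, and the continuous function $d$, so it is continuous, hence Borel. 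Taking countable sups and the $\limsup_k$ preserves Borel measurability, giving property (1). Finally, one remarks that $c$ depends only on $\mathbb G$ (via Lemma \ref{lem:equiv_norm_dcc}), not on $d$, so $\varphi_d\in\mathcal M_{cc}^{c\alpha}(\mathbb G)$ with a uniform constant.
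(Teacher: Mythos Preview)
Your proof is correct and follows essentially the same approach as the paper: the homogeneity via the one-parameter group property of dilations and a substitution, the two-sided bound via \eqref{ineq} combined with Lemma \ref{lem:equiv_norm_dcc} and \eqref{left}, and measurability by rewriting the limsup over rational $t$ (the paper states the identity $\varphi_d(x,v)=\lim_n\sup_{t\in\mathbb Q,\,|t|<1/n}d(x,x\cdot\delta_t e^{\bar v})/|t|$ without further justification, whereas you spell out the continuity argument that makes this valid). The only cosmetic difference is the order in which the three properties are checked.
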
 
\begin{proof}
In order to prove $(1)$, let just observe that
\[
\varphi_d(x,v)=\lim_{n\to\infty}\sup_{\substack{t\in\mathbb Q:\\ \lvert t \rvert<1/n}}
\frac{d(x,x\cdot\delta_t \exp{{\rm d}_x\tau_{x^{-1}}}[v])}{\lvert t \rvert}\quad\mbox{for every }(x,v)\in H\G.
\]
Let us verify $(2)$. Pick $x\in \mathbb{G}$, $v\in H_x \mathbb{G}$ and $t,\lambda\in\bbR$.
Since the differential of the left translation is a diffeomorphism and from the equality \eqref{eq:exp_on_horizontal}
we have that $ \delta_t\exp\big({\rm d}_x \tau_{x^{-1}}
[\delta_{\lambda}^\star(v)]\big)
= \delta_t \delta_{\lambda}\exp\big({\rm d}_x \tau_{x^{-1}}[v]\big)$. Therefore
\begin{align*}
\varphi_d(x, \delta_{\lambda}^* v)=
\limsup_{t\to 0}\frac{d(x,x \cdot \delta_t\delta_\lambda e^{{\rm d}_x \tau_{x^{-1}}[v]})}
{ \lvert t \rvert}=\lvert \lambda \rvert \limsup_{t\to 0}\frac{d(x,x \cdot \delta_{t \lambda} 
e^{{\rm d}_x \tau_{x^{-1}}[v])})}
{ \rvert t\lambda \lvert}=\lvert \lambda \rvert \varphi(x,v).
\end{align*}
In order to show $(3)$, fix $x\in \mathbb{G}$ and $v\in H_x \mathbb{G}$. Since $d\in \mathcal{D}_{cc}(\mathbb{G})$ we can write 
\begin{align*}
\varphi_d(x,v)&\leq \alpha \limsup_{t\to 0}\frac{d_{cc}(x,x \cdot \delta_t e^{{\rm d}_x \tau_{x^{-1}}[v]})}
{\lvert t \rvert}
=\alpha \, d_{cc}(e, \exp {\rm d}_x \tau_{x^{-1}}[v])
\leq c \, \alpha \| {\rm d}_x \tau_{x^{-1}}[v]\|_e
\end{align*}
where in the last inequality we applied Lemma \ref{lem:equiv_norm_dcc}. The estimate from below can be proved similarly.
Finally, using the left invariance of the norm, for every $(x, v) \in H \mathbb{G}$, we get that
\[\frac{1}{c \alpha} \| v\|_x \leq \varphi_d(x, v) \leq c \,\alpha \| v\|_x \]
and the conclusion follows.
\end{proof}

The next result comes from \cite[Proposition 1.3.3]{Monti} and a general proof can be find in \cite[Proposition 3.50]{agrac}. It says that Lipschitz curves and horizontal ones essentially coincide when the $L^{\infty}$-norm of the canonical coordinates is finite.
\begin{prop}\label{LipHor}
 A curve $\gamma:[a, b]\to \Omega \subset \G$ is Lipschitz if and only if it is horizontal and $\norm{h}_{L^\infty(a,b)}\leq L$, where $L$ is the Lipschitz constant.
\end{prop}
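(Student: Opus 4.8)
The plan is to prove the two implications separately; the ``only if'' part is the substantial one. The ``if'' part follows directly from the definition of $d_{cc}$: if $\gamma$ is horizontal with canonical coordinates $h$ and $\norm{h}_{L^\infty(a,b)}\le L$, then for $a\le s<t\le b$ the restriction $\gamma|_{[s,t]}$ is a horizontal curve joining $\gamma(s)$ to $\gamma(t)$, and since $\{X_1(\gamma(r)),\dots,X_m(\gamma(r))\}$ is orthonormal in $H_{\gamma(r)}\bbG$ we obtain
\[
d_{cc}(\gamma(s),\gamma(t))\le L_{\bbG}(\gamma|_{[s,t]})=\int_s^t\norm{\dot\gamma(r)}_{\gamma(r)}\,{\rm d}r=\int_s^t\lvert h(r)\rvert\,{\rm d}r\le L(t-s),
\]
so $\gamma$ is $L$-Lipschitz with respect to $d_{cc}$.

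Conversely, suppose $\gamma$ is $L$-Lipschitz for $d_{cc}$. First I would promote this to Euclidean Lipschitzianity: the set $K\coloneqq\gamma([a,b])$ is compact (it is $d_{cc}$-compact, and $d_{cc}$ induces the Euclidean topology by Theorem \ref{sea}), so Theorem \ref{sea} gives $\lvert\gamma(s)-\gamma(t)\rvert\le C_K\,d_{cc}(\gamma(s),\gamma(t))\le C_KL\lvert t-s\rvert$; hence $\gamma$ is differentiable at $\mathcal L^1$-a.e.\ $t_0$ and $\dot\gamma\in L^\infty([a,b];\bbR^n)$. Next I would show that $\dot\gamma(t_0)\in H_{\gamma(t_0)}\bbG$ at each differentiability point $t_0$. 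Translating, set $\sigma(s)\coloneqq\gamma(t_0)^{-1}\cdot\gamma(t_0+s)$, so that $\sigma(0)=e$, $d_{cc}(e,\sigma(s))\le L\lvert s\rvert$ by left invariance, and $\dot\sigma(0)=w\coloneqq{\rm d}_{\gamma(t_0)}\tau_{\gamma(t_0)^{-1}}[\dot\gamma(t_0)]\in\g$ (as left translations map $X_i(\gamma(t_0))$ to $e_i$). Writing $\sigma(s)=\exp\big(\sum_i\sigma_i(s)X_i\big)$ in exponential coordinates and letting $d_i\in\{1,\dots,k\}$ be the stratum degree of $e_i$ (so $d_i=1$ precisely for $i\le m$), the map $\rho\big(\exp\sum_i y_iX_i\big)\coloneqq\max_i\lvert y_i\rvert^{1/d_i}$ is a continuous quasi-norm, homogeneous of degree $1$ under $(\delta_\lambda)_{\lambda>0}$; comparing it with $d_{cc}(e,\cdot)$, which is also continuous (Theorem \ref{sea}) and $1$-homogeneous, on the compact set $\{\rho=1\}$ yields a constant $c\ge1$ with $\rho\le c\,d_{cc}(e,\cdot)$, so that $\lvert\sigma_i(s)\rvert^{1/d_i}\le c\,d_{cc}(e,\sigma(s))\le cL\lvert s\rvert$ for every $i$. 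For $i>m$ this forces $\lvert\sigma_i(s)/s\rvert\le(cL)^{d_i}\lvert s\rvert^{d_i-1}\to0$, hence the $i$-th component of $w=\dot\sigma(0)$ vanishes; therefore $w\in\g_1$ and $\dot\gamma(t_0)={\rm d}_e\tau_{\gamma(t_0)}[w]\in H_{\gamma(t_0)}\bbG$. So $\gamma$ is horizontal, with canonical coordinates $h_i(t_0)=w_i$ ($i\le m$) and $\lvert h(t_0)\rvert=\norm{w}_e=\norm{\dot\gamma(t_0)}_{\gamma(t_0)}$.

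It remains to show $\norm{h}_{L^\infty(a,b)}\le L$, for which I would use the abelianization $\pi\colon\bbG\to\bbG/[\bbG,\bbG]\cong\bbR^m$, which in exponential coordinates is the projection $(x_1,\dots,x_n)\mapsto(x_1,\dots,x_m)$. Its differential restricts to a linear isometry of each horizontal fiber $H_x\bbG$ onto $\bbR^m$ (sending the orthonormal frame $X_1(x),\dots,X_m(x)$ to the standard basis) and annihilates $X_{m+1}(x),\dots,X_n(x)$; hence $\pi$ maps horizontal curves to curves of no greater Euclidean length and is $1$-Lipschitz from $(\bbG,d_{cc})$ to $(\bbR^m,\lvert\cdot\rvert)$. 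Then $\pi\circ\gamma$ is $L$-Lipschitz in $\bbR^m$, and by the chain rule (using horizontality, $\dot\gamma(t)=\sum_{i=1}^m h_i(t)X_i(\gamma(t))$) the vector $\frac{\rm d}{{\rm d}t}(\pi\circ\gamma)(t)={\rm d}\pi_{\gamma(t)}\big[\sum_{i=1}^m h_i(t)X_i(\gamma(t))\big]$ has Euclidean norm $\lvert h(t)\rvert$ at a.e.\ $t$; being the a.e.\ derivative of an $L$-Lipschitz $\bbR^m$-valued map, it is $\le L$ a.e., so $\norm{h}_{L^\infty(a,b)}\le L$, which completes the proof.

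The part I expect to be the real obstacle is bridging the trivial upper bound $d_{cc}(\gamma(s),\gamma(t))\le\int_s^t\lvert h\rvert$ with the matching infinitesimal information required for the converse: both the exclusion of ``vertical'' components of $\dot\gamma$ and the recovery of the sharp constant rest on the blow-up (dilation) structure of $d_{cc}$, used above through the homogeneous-norm comparison — the easy half of the ball--box principle — and through the isometric behaviour of $\pi$ on horizontal fibers. The remaining steps are routine; alternatively, one may simply quote the statement as classical, cf.\ \cite[Proposition 1.3.3]{Monti} and \cite[Proposition 3.50]{agrac}.
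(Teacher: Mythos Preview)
Your proof is correct, and in fact goes well beyond what the paper does: the paper does not prove Proposition~\ref{LipHor} at all, but simply records it as a known fact and refers the reader to \cite[Proposition 1.3.3]{Monti} and \cite[Proposition 3.50]{agrac} (as you yourself note in your closing remark). So there is nothing to compare against; you have supplied a self-contained argument where the authors were content with a citation.

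For the record, your argument is sound on all three points. The ``if'' direction is immediate from the definition of $d_{cc}$. For the ``only if'' direction, your blow-up via the homogeneous quasi-norm $\rho$ correctly forces the higher-layer components of $\dot\sigma(0)$ to vanish (this is precisely the easy half of the ball--box estimate), establishing horizontality. The sharp bound $\|h\|_{L^\infty}\le L$ via the abelianization $\pi\colon\G\to\G/[\G,\G]\cong\mathbb R^m$ is an elegant touch: since $\pi$ is a group homomorphism and ${\rm d}\pi$ restricts to an isometry on each $H_x\G$, it is $1$-Lipschitz from $(\G,d_{cc})$ to $(\mathbb R^m,|\cdot|)$, and composing with $\gamma$ immediately gives $|h(t)|=|({\rm d}/{\rm d}t)(\pi\circ\gamma)(t)|\le L$ almost everywhere. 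This is essentially the same mechanism as in Monti's proof, packaged slightly differently.
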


In general, if $(M,d)$ is a metric space and $\gamma:[0,1]\to M$ a Lipschitz curve, then
the \emph{classical metric derivative} is defined as
$$\lvert \dot{\gamma}(t) \rvert_d \coloneqq \lim_{s \rightarrow 0} \frac{d(\gamma(t + s),\gamma(t))}{ \lvert s \rvert}.$$
The existence of the previous limit is a general fact that holds in any metric space (see \cite[Theorem 2.7.6]{Burago}); indeed $\lvert \dot{\gamma}(t) \rvert_d $ exists for a.e.\ $t\in[0,1]$, it is a measurable function and it satisfies the equality 
\begin{equation}\label{dmetric}
L_d(\gamma)=\int_0^1 \lvert \dot{\gamma}(t) \rvert_d\,{\rm d}t .
\end{equation}
 where the \emph{classical length functional} of a rectifiable curve is defined as
 $$L_d(\gamma) = \sup_{ \{0 \leq t_1 < \ldots < t_k \leq 1\}}
\sum_{i = 1}^{k - 1} d(\gamma(t_{i + 1}), \gamma(t_i)),$$ and the supremum is taken over all finite partitions of $[0, 1]$.\\

Carnot groups are naturally endowed with sub-Riemannian distances which make them interesting examples of metric spaces $(\mathbb{G},d_{cc})$. In particular, the metric derivative can be explicitly computed 
(see \cite[Theorem 1.3.5]{Monti}).
\begin{lem}\label{le1}
Let $\gamma : [0,1] \rightarrow \mathbb{G}$ be a Lipschitz curve and let $h \in L^{\infty}(0,1)^{m}$ be its vector of canonical coordinates. Then
\begin{align*}
\lvert &\dot{\gamma}(t) \rvert_{d_{cc}} =\lim_{s\to 0}\frac{d_{cc}(\gamma(t+s),\gamma(t))}{\lvert s \rvert}= \lvert h(t) \rvert \quad \text{for a.e.}\; t\in [0,1]\\
&\text{and} \quad \lim_{s \rightarrow 0} \delta_{\frac{1}{s}}\left( \gamma(t)^{-1}\cdot \gamma(t + s)\right) = \left(h_1(t), \ldots, h_m(t), 0, \ldots, 0\right)
\quad \text{for a.e. }\; t \in [0,1].
\end{align*}
\end{lem}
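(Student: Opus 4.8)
The plan is to fix a point $t\in(0,1)$ that is a Lebesgue point of $h$ (this is all we need, and the set of such $t$ has full measure) and to reduce everything to a blow-up of $\gamma$ at $t$. First I would use the left-invariance of $d_{cc}$ to write $d_{cc}(\gamma(t+s),\gamma(t))=d_{cc}(e,\gamma(t)^{-1}\cdot\gamma(t+s))$, and observe that, since $X_1,\dots,X_m$ are left-invariant, the curve $r\mapsto\gamma(t)^{-1}\cdot\gamma(t+r)$ is again horizontal, starts at $e$, and has canonical coordinates $r\mapsto h(t+r)$.

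For the upper bound in the first identity I would just invoke Definition \ref{carnotdistance}: the restriction of the above curve to $[0,s]$ joins $e$ to $\gamma(t)^{-1}\cdot\gamma(t+s)$ and has length $\int_t^{t+s}|h(r)|\,{\rm d}r$ (by Definition \ref{horizontalcurve} and the orthonormality of $\{X_i(x)\}$), hence $d_{cc}(\gamma(t+s),\gamma(t))\le\int_t^{t+s}|h(r)|\,{\rm d}r$; dividing by $|s|$ and using that $t$ is a Lebesgue point of $h$ gives $\limsup_{s\to0}\tfrac{d_{cc}(\gamma(t+s),\gamma(t))}{|s|}\le|h(t)|$. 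For the matching lower bound the key point is an elementary coordinate estimate: in exponential coordinates the generating horizontal fields have the triangular form $X_i=\partial_i+\sum_{\ell\text{ in layer}\ge2}(\cdots)\partial_\ell$, so the first $m$ exponential coordinates of any horizontal curve have derivative exactly $h_i$; moreover the group law (via the Campbell--Baker--Hausdorff formula) affects only coordinates of layer $\ge2$, so the first $m$ coordinates of $\gamma(t)^{-1}\cdot\gamma(t+s)$ are $\int_t^{t+s}h_i(r)\,{\rm d}r$. The very same computation applied to an arbitrary horizontal curve joining $e$ to a point $q$, followed by taking the infimum, shows that the Euclidean norm of the first $m$ coordinates of $q$ is $\le d_{cc}(e,q)$. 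Combining the two, $\big|\int_t^{t+s}h(r)\,{\rm d}r\big|\le d_{cc}(\gamma(t+s),\gamma(t))$, and dividing by $|s|$ yields $\liminf_{s\to0}\tfrac{d_{cc}(\gamma(t+s),\gamma(t))}{|s|}\ge|h(t)|$. This proves the first identity.

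For the second identity I would argue by rescaling. For small $s\neq0$ set $\sigma_s(r):=\delta_{1/s}\big(\gamma(t)^{-1}\cdot\gamma(t+sr)\big)$, $r\in[0,1]$ (using the extension of dilations to negative factors on the first stratum recalled above when $s<0$). Using \eqref{eq:exp_on_horizontal}, the fact that dilations act on the first layer by scalar multiplication (so ${\rm d}\delta_{1/s}[X_i]=\tfrac1s X_i$ along horizontal directions), and the chain rule, one checks that $\sigma_s$ is the horizontal curve with $\sigma_s(0)=e$ and canonical coordinates $r\mapsto h(t+sr)$, which are bounded by $\|h\|_{L^\infty}$ uniformly in $s$; in particular all the $\sigma_s$ stay in a fixed compact set for $r\in[0,1]$. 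Since $t$ is a Lebesgue point, $h(t+s\,\cdot)\to h(t)$ in $L^1(0,1)$ as $s\to0$, and by continuous dependence of solutions of $\dot\sigma=\sum_i g_iX_i(\sigma)$ on the datum $g$ in $L^1$ (a Grönwall estimate on the fixed compact set), $\sigma_s\to\sigma_0$ uniformly on $[0,1]$, where $\sigma_0$ solves $\dot\sigma_0=\sum_i h_i(t)X_i(\sigma_0)$, $\sigma_0(0)=e$, i.e.\ $\sigma_0(r)=\exp\!\big(r\sum_{i=1}^m h_i(t)X_i\big)$. Evaluating at $r=1$ gives $\delta_{1/s}\big(\gamma(t)^{-1}\cdot\gamma(t+s)\big)=\sigma_s(1)\to\exp\!\big(\sum_i h_i(t)X_i\big)=(h_1(t),\dots,h_m(t),0,\dots,0)$ in exponential coordinates, as claimed. (One could instead extract the second identity coordinate by coordinate — the first $m$ coordinates are $\tfrac1s\int_t^{t+s}h_i\to h_i(t)$ by the Lebesgue point property — but handling the higher layers this way forces one to estimate the Campbell--Baker--Hausdorff polynomials, which is why the rescaling route is cleaner.)

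I expect the main obstacle to be exactly the control of the higher-layer coordinates in the second identity, namely showing that after dilating by $1/s$ they tend to $0$ rather than merely staying bounded: the ball--box estimate of Theorem \ref{sea} only gives a bound of order $|s|^j$ for the $j$-th layer, which is not enough. The rescaling argument bypasses this by replacing the pointwise estimate with a stability statement for the horizontal ODE under $L^1$-convergence of the controls, the only nontrivial input being the a priori $L^\infty$-bound on the canonical coordinates that confines all the rescaled curves to a single compact set.
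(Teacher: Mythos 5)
Your argument is correct. The paper itself does not reproduce a proof of this lemma, referring instead to Monti's thesis (\cite{Monti}, Theorem 1.3.5 and Lemma 2.1.4), and the argument you give is essentially the standard one found there: for the first identity, upper bound by the length of the natural horizontal competitor $r\mapsto\gamma(t)^{-1}\cdot\gamma(t+r)$ and lower bound from the first-layer coordinate projection together with the additivity of first-layer exponential coordinates under the group law; for the second, rescaling to the family $\sigma_s$ of horizontal curves with controls $h(t+s\,\cdot)$ and invoking Gr\"onwall/$L^1$-stability of the horizontal ODE, with the uniform $L^\infty$-bound on $h$ supplying the a priori compactness. You also correctly identify the real subtlety -- that ball--box estimates only give boundedness, not vanishing, of the higher-layer coordinates after dilation by $1/s$ -- and that the ODE-stability argument is precisely what circumvents it.
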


The second claim is proved in \cite[Lemma 2.1.4]{Monti} and it gives a characterization of horizontal curves in terms of canonical coordinates. 
Therefore, by Proposition \ref{LipHor}, a Lipschitz curve is horizontal and,
with abuse of notation, we set the following quantity: 
\[\exp{\dot{\gamma}(t)} \coloneqq \exp {\rm d}_{\gamma(t)} \tau_{{\gamma(t)}^{-1}}[\dot{\gamma}(t)] 
		= \big( h_1(t), \ldots, h_m(t), 0 \ldots, 0\big), 
		\quad \mbox{ for a.e. } t \in [0,1].\]
Now, let $d \in \mathcal{D}_{cc}(\mathbb{G})$, then $(\G, d)$ inherits the structure of a metric space and hence, for any horizontal curve $\gamma : [0,1] \rightarrow \mathbb{G}$ we get that
\begin{equation}\label{uguaglianza2} 
\lvert \dot{\gamma}(t) \rvert_d = \varphi_d(\gamma(t), \dot{\gamma}(t))
\quad\mbox{for a.e.\ }t\in[0,1].
\end{equation}

Finally, the identities \eqref{dmetric} and \eqref{uguaglianza2} imply the following well known length reconstruction result.
\begin{thm}\label{lunghezza} Let $d \in \mathcal{D}_{cc}(\mathbb{G})$. Then, for every horizontal curve $\gamma : [0,1] \rightarrow \mathbb{G}$ we have
\begin{equation}
\label{eq2}
L_d(\gamma) = \int_0^1 \varphi_d(\gamma(t), \dot{\gamma}(t)) \,{\rm d}t.
\end{equation}
Moreover, for every $x, y \in \mathbb{G}$ we have
$$d(x, y) = \inf \Biggl\{ \int_0^1 \varphi_d(\gamma(t), \dot{\gamma}(t)) \,{\rm d}t \;:\;
\gamma \in \mathcal{H}([0,1], \mathbb{G}), \;\, \gamma(0) = x, \; \gamma(1)=y \Biggr\}.$$
\end{thm}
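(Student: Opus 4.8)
The statement follows by assembling the two identities \eqref{dmetric} and \eqref{uguaglianza2} with the fact that $(\G,d)$ is a geodesic---hence length---metric space; the only care needed concerns reparametrizations.

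First I would prove \eqref{eq2}. Let $\gamma\colon[0,1]\to\G$ be a horizontal curve. Since, by Definition \ref{horizontalcurve} and Proposition \ref{LipHor}, $\gamma$ is Lipschitz with respect to $d_{cc}$, and since $d\in\mathcal{D}_{cc}(\G)$ satisfies $d\leq\alpha\,d_{cc}$ on all of $\G$, the curve $\gamma$ is Lipschitz also as a curve in the metric space $(\G,d)$. Hence the general theory of rectifiable curves in metric spaces applies: the metric derivative $\lvert\dot\gamma(t)\rvert_d$ exists for a.e.\ $t\in[0,1]$ and \eqref{dmetric} yields $L_d(\gamma)=\int_0^1\lvert\dot\gamma(t)\rvert_d\,{\rm d}t$. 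Plugging in the pointwise identity \eqref{uguaglianza2}, which holds for a.e.\ $t$ precisely because $\gamma$ is horizontal, gives \eqref{eq2}.

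Next I would deal with the distance representation. Since $d$ is a geodesic distance, $(\G,d)$ is a length space, so for every $x,y\in\G$ one has $d(x,y)=\inf\{L_d(\gamma)\}$, the infimum ranging over all rectifiable curves $\gamma$ joining $x$ to $y$. The key remark is that this infimum is unchanged when restricting to horizontal curves defined on $[0,1]$. Indeed, if $\gamma$ is a rectifiable competitor with $L_d(\gamma)<+\infty$, then $L_{d_{cc}}(\gamma)\leq\alpha\,L_d(\gamma)<+\infty$, so $\gamma$ is rectifiable with respect to $d_{cc}$ as well; reparametrizing it by a constant multiple of its $d_{cc}$-arclength produces a curve $\tilde\gamma\colon[0,1]\to\G$ with the same endpoints, the same image, the same $d$-length (length being a reparametrization invariant), and Lipschitz with respect to $d_{cc}$, hence $\tilde\gamma\in\mathcal{H}([0,1],\G)$ by Proposition \ref{LipHor}. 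Conversely, every horizontal curve is in particular a rectifiable competitor. Therefore
\[
d(x,y)=\inf\big\{L_d(\gamma)\,:\,\gamma\in\mathcal{H}([0,1],\G),\ \gamma(0)=x,\ \gamma(1)=y\big\},
\]
and applying \eqref{eq2} to each admissible $\gamma$ converts the right-hand side into the claimed infimum of the length functional of $\varphi_d$.

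I do not expect a genuine obstacle here, as the result is classical; the points deserving attention are (i) verifying that the metric-space facts---a.e.\ existence of $\lvert\dot\gamma\rvert_d$, the identity $L_d(\gamma)=\int_0^1\lvert\dot\gamma\rvert_d\,{\rm d}t$, and the length-space property---transfer to $(\G,d)$, which they do because $(\G,d)$ is a genuine metric space bi-Lipschitz equivalent to $(\G,d_{cc})$, and (ii) the reparametrization argument that replaces an arbitrary rectifiable competitor by a horizontal curve on $[0,1]$; the latter is also what makes the integrand $\varphi_d(\gamma(t),\dot\gamma(t))$ in \eqref{eq2} meaningful, via the a.e.-defined tangent $\dot\gamma(t)\in H_{\gamma(t)}\G$.
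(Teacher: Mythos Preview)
Your proposal is correct and follows exactly the route the paper indicates: the paper simply states that the result is a consequence of \eqref{dmetric} and \eqref{uguaglianza2}, without spelling out a proof. You supply the missing details---in particular the reparametrization argument ensuring that the infimum in the length-space identity may be restricted to horizontal curves on $[0,1]$---but the underlying strategy is identical.
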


\subsection{Convexity of \texorpdfstring{$\varphi_d$}{phi-d}}

The aim of the present section is to prove that if $d \in \mathcal{D}_{cc}(\G)$, then $\varphi_d$ is also a sub-Finsler convex metric.
In order to make this, first we have to recall some technical results.

\begin{lem}\label{FubiniLeb}
Let $ \psi \colon \mathbb{G}\to\mathbb{R}$ be a locally bounded, Borel function
and $v\in H_x\mathbb{G} \setminus\{0\}$. Then
\begin{equation}\label{eq:pt_Leb_claim}
\psi(x)=\lim_{t\searrow 0}\frac{1}{t}\int_0^t\psi(x\cdot \delta_s e^{\bar{v}})\,{\rm d}s,
\quad\text{ for }\mathcal L^n\text{-a.e.\ }x\in\mathbb{G}.
\end{equation}
\end{lem}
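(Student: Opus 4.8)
The plan is to read \eqref{eq:pt_Leb_claim} as a one-dimensional Lebesgue differentiation statement along the flow lines of a left-invariant horizontal vector field, and then to combine a Fubini-type argument with the right-translation invariance of $\mathcal L^n$ in order to pass from the information ``for $\mathcal L^1$-a.e.\ parameter $s$'' to the single value $s=0$. First I would fix $\bar v\in H_e\G\setminus\{0\}$ and recall, by \eqref{eq:exp_on_horizontal}, that $x\cdot\delta_s e^{\bar v}=x\cdot e^{s\bar v}$ for every $s\in\mathbb R$, so that $s\mapsto x\cdot e^{s\bar v}$ is a continuous curve and $(e^{s\bar v})_{s\in\mathbb R}$ is a one-parameter subgroup of $\G$ with $e^{s\bar v}\cdot e^{\sigma\bar v}=e^{(s+\sigma)\bar v}$. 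For $x\in\G$ I set $g_x(s):=\psi(x\cdot e^{s\bar v})$; since $\psi$ is Borel and locally bounded and the curve $s\mapsto x\cdot e^{s\bar v}$ sends bounded intervals into compact subsets of $\G$, each $g_x$ is Borel and locally bounded, hence $g_x\in L^1_{\mathrm{loc}}(\mathbb R)$. By the classical Lebesgue differentiation theorem on $\mathbb R$, the set
\[
N_x:=\left\{\,s\in\mathbb R\ :\ \lim_{t\searrow 0}\frac1t\int_s^{s+t}g_x(\sigma)\,{\rm d}\sigma\ \text{does not exist or differs from }g_x(s)\,\right\}
\]
is $\mathcal L^1$-negligible for \emph{every} $x\in\G$.

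Next I would check that $E:=\{(s,x)\in\mathbb R\times\G:s\in N_x\}$ is $(\mathcal L^1\otimes\mathcal L^n)$-measurable. Since $(s,x)\mapsto x\cdot e^{s\bar v}$ is continuous, $(s,x)\mapsto g_x(s)$ is Borel; for each fixed $t>0$ the average $(s,x)\mapsto\frac1t\int_s^{s+t}g_x(\sigma)\,{\rm d}\sigma$ is Borel, and since it is continuous in $t$ one may compute its $\limsup$ as $t\searrow0$ along $t\in\mathbb Q$, which exhibits $E$ as a countable combination of Borel sets. As $\mathcal L^1(N_x)=0$ for every $x$, Tonelli's theorem applied to $\mathbbm{1}_E$ gives $(\mathcal L^1\otimes\mathcal L^n)(E)=0$, and integrating first in $x$ yields $\mathcal L^n(A_s)=0$ for $\mathcal L^1$-a.e.\ $s\in\mathbb R$, where $A_s:=\{x\in\G:s\in N_x\}$.

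The crucial final step is to promote this to $\mathcal L^n(A_0)=0$, and this is where I expect the only real work to lie. From the identity $g_{x\cdot e^{\sigma\bar v}}(s)=\psi(x\cdot e^{(s+\sigma)\bar v})=g_x(s+\sigma)$ one reads off $N_{x\cdot e^{\sigma\bar v}}=N_x-\sigma$, hence $A_s=A_0\cdot e^{-s\bar v}$ for every $s\in\mathbb R$. Since $\G$ is nilpotent, hence unimodular, the Haar measure $\mathcal L^n$ (in exponential coordinates) is invariant under right translations, so $\mathcal L^n(A_s)=\mathcal L^n(A_0)$ for all $s$; combining this with $\mathcal L^n(A_s)=0$ for $\mathcal L^1$-a.e.\ $s$ forces $\mathcal L^n(A_0)=0$. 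Finally, for every $x\in\G\setminus A_0$ one has $0\notin N_x$, that is $\lim_{t\searrow0}\frac1t\int_0^t\psi(x\cdot e^{s\bar v})\,{\rm d}s=g_x(0)=\psi(x)$, which is exactly \eqref{eq:pt_Leb_claim}. The main obstacle is therefore not the Lebesgue differentiation theorem itself but this passage from ``a.e.\ $s$'' to ``$s=0$'': it rests on the group-theoretic observation that the bad sets $A_s$ are right translates of one another together with the right-invariance of $\mathcal L^n$; the joint measurability of $E$ is routine but must be checked with a little care.
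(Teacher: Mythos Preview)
Your proposal is correct and follows essentially the same route as the paper: both apply one-dimensional Lebesgue differentiation along the curves $s\mapsto x\cdot e^{s\bar v}$, use Fubini/Tonelli to obtain a null set of bad pairs, and then exploit the one-parameter subgroup identity $e^{s\bar v}\cdot e^{\sigma\bar v}=e^{(s+\sigma)\bar v}$ together with the right-invariance of $\mathcal L^n$ to pass from ``$\mathcal L^1$-a.e.\ $s$'' to $s=0$. The only cosmetic difference is that the paper fixes one good $r$ and right-translates a single null set by $\delta_r e^{\bar v}$, whereas you phrase the same step as the observation $A_s=A_0\cdot e^{-s\bar v}$ for all $s$.
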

\begin{proof}
Given any fixed \(y\in\mathbb{G}\), we have that \(\bbR\ni t\mapsto\psi(y\cdot \delta_t e^{\bar{v}})\in\bbR\)
is a locally bounded and Borel function, thus an application of Lebesgue's differentiation
theorem guarantees that for \(\mathcal L^1\)-a.e.\ \(r\in\bbR\)
\begin{equation}\label{eq:pt_Leb_aux}
\psi(y \cdot \delta_r e^{\bar{v}})=\lim_{t\searrow 0}\frac{1}{t}\int_0^t
\psi(y\cdot \delta_{r+s} e^{\bar{v}})\,\,{\rm d}s.
\end{equation}
In particular, an application of Fubini's theorem ensures that the set
\[
\Gamma\coloneqq\big\{(y,r)\in\bbG \times\bbR\;\big|
\;\text{\eqref{eq:pt_Leb_aux} holds}\big\}
\]
has \(\mathcal L^{n+1}\)-full measure, thus for
\(\mathcal L^1\)-a.e.\ \(r\in\bbR\) we have that \eqref{eq:pt_Leb_aux}
holds for \(\mathcal L^n\)-a.e.\ \(y\in\mathbb{G}\). Fix any such \(r\in\bbR\)
and a \(\mathcal L^n\)-negligible set \(N\subset\mathbb{G}\) satisfying
\eqref{eq:pt_Leb_aux} for every \(y\in\mathbb{G}\setminus N\).\\
Calling \(\sigma_z : \bbG\to\bbG\) the right-translation map
\(\sigma_z w\coloneqq w\cdot z\) for every \(z,w\in\bbG\) and
defining \(N'\coloneqq\sigma_{\delta_r e^{\bar{v}}}(N)\), we thus have that
\(\psi(x)=\lim_{t\searrow 0}\frac{1}{t}\int_0^t\psi(x\cdot\delta_s e^{\bar{v}})
\,{\rm d} s\) holds for every \(x\in\bbG\setminus N'\). Here, we also
used the fact that $\delta_{r+s}e^{\bar{v}}=\delta_r e^{\bar{v}}\cdot\delta_s e^{\bar{v}}$, which
is in turn guaranteed by the fact that $\bar{v}$ belongs to the first layer (see \cite[Lemma 2.2]{Pauls}).
Therefore, in order to prove \eqref{eq:pt_Leb_claim} it is only left to check that
\(N'\) is \(\mathcal L^n\)-negligible. This can be achieved by exploiting
the right-invariance of the measure \(\mathcal L^n\)
(see e.g.\ \cite[Proposition 1.7.7]{Monti}), namely the fact that
\(\mathcal L^n(E\cdot z)=\mathcal L^n(E)\) holds whenever \(E\subset\mathbb G\)
is a Borel set and \(z\in\mathbb G\). Indeed, this implies that
\((\sigma_{\delta_r e^{\bar{v}}})_\#\mathcal L^n=\mathcal L^n\), because
for any Borel set \(E\subset\mathbb G\) it holds that
\[
(\sigma_{\delta_r e^{\bar{v}}})_\#\mathcal L^n(E)=
\mathcal L^n(\sigma_{\delta_r e^{\bar{v}}}^{-1}(E))=
\mathcal L^n(\sigma_{\delta_{-r}e^{\bar{v}}}(E))=
\mathcal L^n(E\cdot\delta_{-r}e^{\bar{v}})=\mathcal L^n(E).
\]
In particular, we conclude that
\(\mathcal L^n(N')=(\sigma_{\delta_r e^{\bar{v}}})_\#\mathcal L^n(N')
=\mathcal L^n(N)=0\), as required.
\end{proof}

\begin{lem}\label{firstlemma}
Let $d \in \mathcal{D}_{cc}(\Omega)$, $\varphi \in \mathcal{M}_{cc}^\alpha(\Omega)$, and $N \subset \Omega$ be such that
$\lvert N \rvert = 0$. Suppose that for every $\gamma \in \mathcal{P}(\Omega,N)$ we have that
\[ d(\gamma(0), \gamma(1))\leq \int_0^1 \varphi(\gamma(t), \dot{\gamma}(t))\,{\rm d}t.
\]
Then for every fixed $a \in \Omega$, for a.e.\ $x\in\Omega$, and for every $v \in H_x\G$, we have that
\[ \left| \langle \nabla_{\mathbb{G}}d_a(x), v \rangle_x \right| 
\leq  \liminf_{t \rightarrow 0}\frac{d(x, x \cdot \delta_t e^{\bar{v}})}{t} 
\leq  \limsup_{t \rightarrow 0}\frac{d(x, x \cdot \delta_t e^{\bar{v}})}{t} \leq \varphi(x, v).\]
\end{lem}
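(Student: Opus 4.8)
The plan is to prove the three inequalities separately; the middle one, $\liminf\le\limsup$, is of course trivial, and throughout I read the limits as $t\to 0$ as one‑sided limits $t\searrow 0$ (for $t<0$ the quotient $d(x,x\cdot\delta_t e^{\bar v})/t$ is non‑positive, so only $t>0$ carries information). The leftmost inequality should follow quickly from Pansu's theorem and the first–order expansion \eqref{diff2}; the rightmost inequality is the substantial part, and it is proved by testing the hypothesis on suitably reparametrised dilation curves and then invoking Lemma \ref{FubiniLeb} to pass to the limit.

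\emph{Leftmost inequality.} Fix $a\in\Omega$. By Theorem \ref{Pansu} and \eqref{diff2} there is a full–measure set $\Omega'\subset\Omega$ such that, for every $x\in\Omega'$ and every $v\in H_x\mathbb G$ (writing $\bar v={\rm d}_x\tau_{x^{-1}}[v]$ and recalling $\pi_x(e^{\bar v})=v$ from \eqref{proj}),
\[
\langle\nabla_{\mathbb G}d_a(x),v\rangle_x=\lim_{t\searrow 0}\frac{d_a(x\cdot\delta_t e^{\bar v})-d_a(x)}{t}.
\]
Since $d_a=d(a,\cdot)$ and $d$ is a distance, the triangle inequality gives $|d_a(x\cdot\delta_t e^{\bar v})-d_a(x)|\le d(x,x\cdot\delta_t e^{\bar v})$; dividing by $t>0$ and taking $\liminf_{t\searrow 0}$ yields $|\langle\nabla_{\mathbb G}d_a(x),v\rangle_x|\le\liminf_{t\searrow 0}d(x,x\cdot\delta_t e^{\bar v})/t$, as required. (Note that here the exceptional set does not depend on $v$.)

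\emph{Rightmost inequality.} Fix a countable dense subset $\{w_n\}_{n\in\mathbb N}$ of $\mathfrak g_1\setminus\{0\}$ and set $\psi_n(y):=\varphi(y,{\rm d}_e\tau_y[w_n])$. By property (1) of Definition \ref{defF} (in the trivialised form on $\mathbb G\times\mathfrak g$) each $\psi_n$ is Borel, and by property (3) together with the left–invariance \eqref{left} it is bounded, $0\le\psi_n\le\alpha\|w_n\|_e$. Hence Lemma \ref{FubiniLeb}, applied to $\psi_n$ in the horizontal direction $w_n$, gives a full–measure set $A_n\subset\mathbb G$ on which $\psi_n(x)=\lim_{t\searrow 0}\frac1t\int_0^t\psi_n(x\cdot\delta_s e^{w_n})\,{\rm d}s$. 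Moreover, by Fubini's theorem and the right–invariance of $\mathcal L^n$ (exactly as in the proof of Lemma \ref{FubiniLeb}), $\int_{\mathbb G}\mathcal L^1(\{s\in[0,1]:x\cdot\delta_s e^{w_n}\in N\})\,{\rm d}\mathcal L^n(x)=\int_0^1\mathcal L^n(N\cdot(\delta_s e^{w_n})^{-1})\,{\rm d}s=0$ since $|N|=0$, so there is a full–measure set $B_n$ on which $x\cdot\delta_s e^{w_n}\notin N$ for a.e.\ $s\in[0,1]$. Put $A:=\Omega\cap\bigcap_n(A_n\cap B_n)$, of full measure in $\Omega$. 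Now fix $x\in A$ and $n$: since $\Omega$ is open, for all small $t>0$ the curve $\gamma(r):=x\cdot\delta_{tr}e^{w_n}=x\cdot\exp(tr\,w_n)$, $r\in[0,1]$, stays in $\Omega$; it is horizontal (being a dilation curve along $w_n\in\mathfrak g_1$) and lies in $\mathcal P(\Omega,N)$ because $x\in B_n$. A short computation with \eqref{eq:ODE_def_exp} and left–invariance gives $\dot\gamma(r)=t\,{\rm d}_e\tau_{\gamma(r)}[w_n]$, hence $\varphi(\gamma(r),\dot\gamma(r))=t\,\psi_n(\gamma(r))$ by property (2). Applying the hypothesis to $\gamma$ and changing variables $s=tr$,
\[
d(x,x\cdot\delta_t e^{w_n})=d(\gamma(0),\gamma(1))\le\int_0^1\varphi(\gamma(r),\dot\gamma(r))\,{\rm d}r=\int_0^t\psi_n(x\cdot\delta_s e^{w_n})\,{\rm d}s,
\]
so dividing by $t$ and letting $t\searrow 0$ (using $x\in A_n$) gives $\limsup_{t\searrow 0}d(x,x\cdot\delta_t e^{w_n})/t\le\varphi(x,{\rm d}_e\tau_x[w_n])$ for every $n$.

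It remains to pass from the dense set of directions to an arbitrary $v\in H_x\mathbb G$; the case $v=0$ is trivial, so write $v={\rm d}_e\tau_x[w]$ with $w\neq 0$ and pick $w_{n_k}\to w$. By the triangle inequality for $d$, the bound $d\le\alpha d_{cc}$ from \eqref{ineq}, the left–invariance of $d_{cc}$ and its $1$–homogeneity under dilations,
\[
\Bigl|\tfrac{d(x,x\cdot\delta_t e^{w})}{t}-\tfrac{d(x,x\cdot\delta_t e^{w_{n_k}})}{t}\Bigr|\le\tfrac{\alpha\,d_{cc}(x\cdot\delta_t e^{w},x\cdot\delta_t e^{w_{n_k}})}{t}=\alpha\,d_{cc}(\exp w,\exp w_{n_k}),
\]
and the right–hand side tends to $0$ by Theorem \ref{sea}. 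Combining this with the previous bound and with the continuity of $\varphi(x,\cdot)$ (a norm on the finite–dimensional fibre $H_x\mathbb G$) we get $\limsup_{t\searrow 0}d(x,x\cdot\delta_t e^{w})/t\le\varphi(x,v)$, which completes the proof. The main obstacle, and the place where care is needed, is precisely this rightmost inequality: Lemma \ref{FubiniLeb} produces an exceptional set depending on the chosen direction, so one must first restrict to countably many directions, then verify (via the right–invariance of $\mathcal L^n$ and $|N|=0$) that the associated dilation curves belong to $\mathcal P(\Omega,N)$ for a.e.\ basepoint, and finally recover all directions by the density argument above, which uses the equicontinuity in the direction of the difference quotients and the continuity of $\varphi(x,\cdot)$.
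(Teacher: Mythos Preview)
Your proof is correct and follows the same strategy as the paper: Pansu differentiability plus the triangle inequality for the leftmost bound, and the hypothesis applied to reparametrised dilation curves together with Lemma~\ref{FubiniLeb} for the rightmost one, followed by a density argument over directions. You are in fact more careful than the paper on two points it glosses over---the Fubini/right-invariance argument showing that for a.e.\ basepoint the dilation curve lies in $\mathcal P(\Omega,N)$, and the explicit passage from the countable dense set of directions to all of $H_x\mathbb G$ via the uniform (in $t$) Lipschitz dependence of the difference quotient on the direction. One small caveat: in that last step you invoke ``the continuity of $\varphi(x,\cdot)$ (a norm on the finite-dimensional fibre)'', but the lemma only assumes $\varphi\in\mathcal M_{cc}^\alpha(\Omega)$, which does not include convexity; the paper's density step tacitly relies on the same continuity, and since the lemma is applied only to the convex metric $\xi_\varepsilon$, this is harmless in context.
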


\begin{proof} Let $N$ be as in the hypothesis and $v \in H_x \mathbb{G}$. 
For $a \in \Omega$, let $E(a, v)$ be the set of all $x \in \Omega$ for which 
$d_a$ is Pansu differentiable for a.e.\ $x \in \Omega$ and the map 
$ [0,1] \ni t \mapsto x \cdot\delta_t e^{\bar{v}}$ belongs to 
$\mathcal{P}(\Omega, N)$, with $t$ small enough. Moreover, thanks to Lemma \ref{FubiniLeb}, we can assume that
$$ \lim_{t\searrow 0} \frac{1}{t} \int_0^t
\varphi( x \cdot \delta_s e^{\bar{v}}, v) \,{\rm d}s = \varphi(x, v).$$ 
By Pansu--Rademacher Theorem $ \lvert \Omega \setminus E(a,v) \rvert = 0$ and, if $x \in E(a, v)$, applying the identity \eqref{diff2} we have that 
$$\lim_{t \rightarrow 0} \frac{d_a(x) - d_a( x \cdot \delta_t e^{\bar{v}}) - 
\langle \nabla_{\mathbb{G}} d_a(x),\pi_x(\delta_t e^{- \bar{v}}) \rangle_x}{ \lvert t \rvert} = 0.$$
Hence, by the reverse triangle inequality we can assert that
\begin{align*}
 \left| \langle \nabla_{\mathbb{G}} d_a(x), v \rangle_x \right|
&\leq \left| \liminf_{t \rightarrow 0} \frac{d_a(x \cdot \delta_t e^{\bar{v}}) - d_a(x)}{t}\right|
\leq  \liminf_{t \rightarrow 0}\frac{d(x, x \cdot \delta_t e^{\bar{v}})}{t} \\
&\leq  \limsup_{t \rightarrow 0}\frac{d(x, x \cdot \delta_t e^{\bar{v}})}{t}
\leq \lim_{t\searrow 0} \frac{1}{t} \int_0^t
\varphi( x \cdot \delta_s e^{\bar{v}}, v) \,{\rm d}s = \varphi(x, v).
\end{align*}
Pick a countable dense subset $F \subset H_x \G$ and put $E(a) = \cap_{y \in F} E(a,y)$. Then
$\lvert \Omega \setminus E(a) \rvert = 0$ and for all $x \in E(a)$ and all $v \in H_x\G$
we obtain the same estimate above.
\end{proof}
We observe that the previous Lemma could be proved under more general conditions. 
Indeed, the distance needs only to be geodesic in its domain.

\begin{lem}\label{estimate} Let  $\varphi \in \mathcal{M}_{cc}^\alpha(\Omega)$
 be a sub-Finsler convex metric, let $d \in \mathcal{D}_{cc}(\Omega)$ and $\Theta \subset \Omega$ be a countable dense set of $\Omega$. If
$$ \norm{\varphi(x, \nabla_{\mathbb{G}} d_a(x))}_{\infty} \leq 1 \;\;\; \forall a \in \Theta,$$
then there exists $N \subset \Omega$ such that $\lvert N \rvert = 0$ and for every $\gamma \in \mathcal{P}(\Omega, N)$
$$d(\gamma(0), \gamma(1)) \leq \int_0^1 \varphi^\star(\gamma(t), \dot{\gamma}(t)) \,{\rm d}t.$$
\end{lem}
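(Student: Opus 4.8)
The plan is to recover $d(\gamma(0),\gamma(1))$ as a supremum of increments of the $1$-Lipschitz functions $d_a$, $a\in\Theta$, and then to bound each such increment by $\int_0^1\varphi^\star(\gamma(t),\dot\gamma(t))\,{\rm d}t$ by differentiating $d_a$ along $\gamma$ and invoking the duality inequality built into \eqref{dual}. Concretely, for each $a\in\Theta$ I would choose an $\mathcal L^n$-negligible set $N_a\subset\Omega$ such that every $x\in\Omega\setminus N_a$ is a point of Pansu differentiability of $d_a$ with $\varphi(x,\nabla_{\mathbb{G}}d_a(x))\le 1$; this is legitimate by Theorem \ref{Pansu} and the standing hypothesis $\norm{\varphi(\cdot,\nabla_{\mathbb{G}}d_a(\cdot))}_\infty\le 1$. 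Then I would set $N\coloneqq\bigcup_{a\in\Theta}N_a$, still $\mathcal L^n$-negligible since $\Theta$ is countable. For any $\gamma\in\mathcal P(\Omega,N)$ and any $a\in\Theta$ we then have $\gamma(t)\in\Omega\setminus N_a$ for $\mathcal L^1$-a.e.\ $t\in[0,1]$, because $N_a\subseteq N$.

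Fix $a\in\Theta$. As $d_a$ is $d$-Lipschitz, hence $\alpha$-Lipschitz with respect to $d_{cc}$ (recall $d\le\alpha d_{cc}$), and $\gamma$ is $d_{cc}$-Lipschitz, the map $t\mapsto d_a(\gamma(t))$ is Lipschitz on $[0,1]$, so the fundamental theorem of calculus gives
\[
d_a(\gamma(1))-d_a(\gamma(0))=\int_0^1(d_a\circ\gamma)'(t)\,{\rm d}t .
\]
For $\mathcal L^1$-a.e.\ $t$ the curve $\gamma$ is differentiable at $t$, the derivative $(d_a\circ\gamma)'(t)$ exists, and $d_a$ is Pansu differentiable at $\gamma(t)$ (this last point being exactly where $\gamma\in\mathcal P(\Omega,N)$ enters). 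At any such $t$ I would establish the chain rule
\[
(d_a\circ\gamma)'(t)=\langle\nabla_{\mathbb{G}}d_a(\gamma(t)),\dot\gamma(t)\rangle_{\gamma(t)}
\]
by expanding $d_a(\gamma(t+s))-d_a(\gamma(t))=L\bigl[\gamma(t)^{-1}\cdot\gamma(t+s)\bigr]+o(\lvert s\rvert)$, where $L\coloneqq d_{\mathbb{G}}d_a(\gamma(t))$ is the Pansu differential, identifying $\lim_{s\to 0}\delta_{1/s}\bigl(\gamma(t)^{-1}\cdot\gamma(t+s)\bigr)$ with the canonical coordinates of $\dot\gamma(t)$ through Lemma \ref{le1}, using the $1$-homogeneity of $L$, and concluding with Remark \ref{scalprod} and \eqref{proj}. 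Since moreover $\varphi(\gamma(t),\nabla_{\mathbb{G}}d_a(\gamma(t)))\le 1$ for a.e.\ $t$, rewriting \eqref{dual} as $\lvert\langle v,w\rangle_x\rvert\le\varphi^\star(x,v)\,\varphi(x,w)$ and taking $x=\gamma(t)$, $v=\dot\gamma(t)$, $w=\nabla_{\mathbb{G}}d_a(\gamma(t))$ yields $(d_a\circ\gamma)'(t)\le\lvert\langle\nabla_{\mathbb{G}}d_a(\gamma(t)),\dot\gamma(t)\rangle_{\gamma(t)}\rvert\le\varphi^\star(\gamma(t),\dot\gamma(t))$ for a.e.\ $t$. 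Integrating, $d_a(\gamma(1))-d_a(\gamma(0))\le\int_0^1\varphi^\star(\gamma(t),\dot\gamma(t))\,{\rm d}t$ for every $a\in\Theta$.

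It remains to take the supremum over $a\in\Theta$. The map $a\mapsto d(a,\gamma(1))-d(a,\gamma(0))$ is $d_{cc}$-continuous (indeed Lipschitz, by the triangle inequality), its maximum over $\Omega$ equals $d(\gamma(0),\gamma(1))$ and is attained at $a=\gamma(0)$, and $\Theta$ is dense in $\Omega$; hence $d(\gamma(0),\gamma(1))=\sup_{a\in\Theta}\bigl(d_a(\gamma(1))-d_a(\gamma(0))\bigr)$, and combining with the bound above gives the statement. The step I expect to be the main obstacle is the almost-everywhere chain rule for $d_a\circ\gamma$: it is precisely there that the assumption $\gamma\in\mathcal P(\Omega,N)$ cannot be dropped, since a curve spending a set of times of positive measure inside the non-differentiability set of some $d_a$ would make the expression $\langle\nabla_{\mathbb{G}}d_a(\gamma(t)),\dot\gamma(t)\rangle_{\gamma(t)}$ meaningless there; the remaining ingredients --- the pointwise duality bound from \eqref{dual} and the recovery of $d$ as a supremum of increments of the $d_a$'s --- are elementary.
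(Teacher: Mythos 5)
The proposal is correct and follows essentially the same route as the paper: a single negligible set $N$ absorbing the Pansu non‑differentiability sets of all $d_a$ ($a\in\Theta$), then the chain rule for $d_a\circ\gamma$ combined with the duality bound $\lvert\langle\nabla_{\mathbb G}d_a,\dot\gamma\rangle\rvert\le\varphi(\cdot,\nabla_{\mathbb G}d_a)\,\varphi^\star(\cdot,\dot\gamma)\le\varphi^\star(\cdot,\dot\gamma)$, and finally recovery of $d(\gamma(0),\gamma(1))$ from increments of $d_a$ over $a\in\Theta$. The only cosmetic differences are that you pass to a supremum over $\Theta$ where the paper takes a limit along a sequence $a_k\to\gamma(0)$, and that you fold the a.e.\ inequality $\varphi(\cdot,\nabla_{\mathbb G}d_a(\cdot))\le 1$ into the exceptional set $N_a$; both choices are equivalent to what the paper does.
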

\begin{proof}Let $E$ be the subset of $x \in \Omega$ where the function $y \mapsto d_a(y)$ is Pansu-differentiable 
for every $a \in \Theta$.
Since $\Theta$ is countable, we have that $N = \Omega \setminus E$ has zero Lebesgue measure.
Let $\gamma \in \mathcal{P}(\Omega, N)$, pick $a \in \Theta$ and set $f(t) \coloneqq d_a(\gamma(t))$ for every $t \in [0,1]$, then
\begin{align*}
d_a( \gamma(1)) - d_a(\gamma(0)) &= f(1) - f(0)
\leq \int_0^1 \left| \frac{\rm d}{\dt} f(t)\right| \,{\rm d}t
 = \int_0^1 \big| \langle \nabla_{\mathbb{G}}d_a(\gamma(t)), h(t)\rangle_{\gamma(t)} \big| \,{\rm d}t \\
&\leq \int_0^1 \varphi\big(\gamma(t), \nabla_{\mathbb{G}}d_a(\gamma(t))\big) \varphi^\star
(\gamma(t), \dot{\gamma}(t))\,{\rm d}t
\leq \int_0^1 \varphi^{\star}(\gamma(t), \dot{\gamma}(t)) \,{\rm d}t.
\end{align*}
Now, by density of $\Theta$ in $\Omega$ we can choose a sequence $\{a_k\}_{k\in\mathbb{N}} \subset \Theta$ converging to $\gamma(0)$, obtaining
\begin{align*}
d(\gamma(0), \gamma(1)) = \lim_{k \rightarrow \infty} \big(d_{a_k}(\gamma(1)) -
d_{a_k}(\gamma(0))\big)
\leq \int_0^1 \varphi^\star(\gamma(t), \dot{\gamma}(t)) \,{\rm d}t.
\end{align*}
\end{proof}

\begin{thm} Let $d \in \mathcal{D}_{cc}(\Omega)$. Then 
$\varphi_d$ is a sub-Finsler convex metric. In particular, for almost all $x \in \Omega$ and all $v \in H_x\mathbb{G}$
\begin{equation}
\varphi_d(x, v) = \lim_{t \rightarrow 0} \frac{d(x, x \cdot \delta_t e^{\bar{v}})}{\lvert t \rvert}.
\end{equation}
\end{thm}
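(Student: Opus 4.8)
The strategy is to combine the three preceding lemmas to obtain both the convexity of $\varphi_d$ and the existence of the limit. The key observation is that, by Theorem \ref{lunghezza}, $d$ itself is the length distance associated to $\varphi_d$, so in particular the hypothesis of Lemma \ref{firstlemma} is satisfied with $\varphi=\varphi_d$: for \emph{every} horizontal curve $\gamma$ (hence for every $\gamma\in\mathcal P(\Omega,N)$, any null set $N$) we have $d(\gamma(0),\gamma(1))\le\int_0^1\varphi_d(\gamma(t),\dot\gamma(t))\,{\rm d}t$. Thus Lemma \ref{firstlemma} applies and yields, for each fixed $a$ in a countable dense set $\Theta\subset\Omega$, for a.e.\ $x\in\Omega$ and every $v\in H_x\G$,
\[
|\langle\nabla_\G d_a(x),v\rangle_x|\le\liminf_{t\to 0}\frac{d(x,x\cdot\delta_t e^{\bar v})}{t}\le\limsup_{t\to 0}\frac{d(x,x\cdot\delta_t e^{\bar v})}{t}\le\varphi_d(x,v).
\]
Intersecting the corresponding full-measure sets over $a\in\Theta$, we get a single null set $N$ off which this holds for all $a\in\Theta$.

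Next I would feed this into Lemma \ref{estimate}. Taking the supremum over $w\in H_x\G\setminus\{0\}$ in the displayed inequality above gives $\varphi_d^\star(x,\nabla_\G d_a(x))\le 1$ at a.e.\ $x$, i.e.\ $\|\varphi_d^\star(\cdot,\nabla_\G d_a(\cdot))\|_\infty\le 1$ for every $a\in\Theta$; but wait — Lemma \ref{estimate} is stated with $\varphi$ and concludes with $\varphi^\star$, so I should apply it with $\varphi$ replaced by $\varphi_d^\star$ (which is a sub-Finsler convex metric by Proposition \ref{properties}), using the hypothesis $\|\varphi_d^\star(\cdot,\nabla_\G d_a(\cdot))\|_\infty\le 1$ and the fact that $(\varphi_d^\star)^\star=\varphi_d^{\star\star}$. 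This produces a null set $N'$ (which I enlarge $N$ to contain) such that for every $\gamma\in\mathcal P(\Omega,N)$,
\[
d(\gamma(0),\gamma(1))\le\int_0^1\varphi_d^{\star\star}(\gamma(t),\dot\gamma(t))\,{\rm d}t.
\]
Now I apply Lemma \ref{firstlemma} a \emph{second} time, this time with $\varphi=\varphi_d^{\star\star}$: its hypothesis is exactly the inequality just obtained, so for a.e.\ $x$ and every $v$ we get $\limsup_{t\to 0}d(x,x\cdot\delta_t e^{\bar v})/t\le\varphi_d^{\star\star}(x,v)$.

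Finally, I assemble the chain of inequalities. By definition $\varphi_d(x,v)=\limsup_{t\to 0}d(x,x\cdot\delta_t e^{\bar v})/|t|$, and one checks the one-sided limsup (as $t\searrow 0$) agrees with the two-sided one using the $1$-homogeneity property (2) of $\varphi_d$ applied to $-v$, or directly from the definition. On the one hand $\varphi_d^{\star\star}(x,v)\le\varphi_d(x,v)$ always holds pointwise (the bidual norm is dominated by the norm). On the other hand, the second application of Lemma \ref{firstlemma} gives $\varphi_d(x,v)=\limsup_t d(x,x\cdot\delta_t e^{\bar v})/|t|\le\varphi_d^{\star\star}(x,v)$ for a.e.\ $x$ and all $v$. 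Hence $\varphi_d(x,v)=\varphi_d^{\star\star}(x,v)$ for a.e.\ $x\in\Omega$ and all $v\in H_x\G$, which by Proposition \ref{bidual} means precisely that $\varphi_d(x,\cdot)$ is a norm for a.e.\ $x$, i.e.\ $\varphi_d$ satisfies the convexity inequality (almost everywhere; one may then redefine $\varphi_d$ on the exceptional set, or note convexity is only needed a.e.\ for the applications). Moreover, combining the first application of Lemma \ref{firstlemma} with the identity $\varphi_d=\varphi_d^{\star\star}$ a.e., the $\liminf$ and $\limsup$ defining $\varphi_d(x,v)$ coincide, so the limit $\lim_{t\to 0}d(x,x\cdot\delta_t e^{\bar v})/|t|$ exists and equals $\varphi_d(x,v)$ for a.e.\ $x$ and all $v$, as claimed.

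\textbf{Main obstacle.} The delicate point is the bookkeeping of null sets and the correct instantiation of the two lemmas with $\varphi$ replaced successively by $\varphi_d^\star$ and $\varphi_d^{\star\star}$ — in particular verifying that $\varphi_d^\star$ and $\varphi_d^{\star\star}$ genuinely lie in the relevant class $\mathcal M_{cc}^{c'\alpha}(\Omega)$ of sub-Finsler convex metrics (via Proposition \ref{properties}) so that the lemmas are applicable, and checking that the quantifier "for every $v\in H_x\G$" survives the countable intersections by the density argument already used in the proof of Lemma \ref{firstlemma}. Passing from "a.e.\ $x$, all $v$" in the bidual identity to the \emph{everywhere} statement in the theorem requires the minor remark that convexity of $\varphi_d(x,\cdot)$ on the finite-dimensional fiber is a closed condition, or simply accepting the a.e.\ formulation, which is all that is used downstream.
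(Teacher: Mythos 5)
Your proposal takes a genuinely different route from the paper's, and its convexity part is valid, but the final assertion that the limit exists is not justified by the steps you give.

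The paper does not apply Lemma~\ref{firstlemma} to $\varphi_d$ or its biduals at all. Instead it constructs the auxiliary metric $\xi(x,v)=\sup_{a\in\Theta}|\langle\nabla_\G d_a(x),v\rangle_x|$ and its perturbations $\xi_\varepsilon=\xi+\varepsilon\|\cdot\|$, verifies $\|\xi_\varepsilon^\star(\cdot,\nabla_\G d_a(\cdot))\|_\infty\le 1$, and then feeds $\xi_\varepsilon^\star$ into Lemma~\ref{estimate} so that Lemma~\ref{firstlemma} can be applied with $\varphi=\xi_\varepsilon$. This yields $|\langle\nabla_\G d_a(x),v\rangle|\le\liminf\le\limsup\le\xi_\varepsilon(x,v)$, and taking the supremum over $a$ and letting $\varepsilon\searrow 0$ squeezes \emph{both} the $\liminf$ and the $\limsup$ to the single convex quantity $\xi(x,v)$. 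Your approach instead applies Lemma~\ref{firstlemma} first with $\varphi=\varphi_d$, then Lemma~\ref{estimate} with $\varphi=\varphi_d^\star$, then Lemma~\ref{firstlemma} again with $\varphi=\varphi_d^{\star\star}$. This correctly gives $\varphi_d=\limsup\le\varphi_d^{\star\star}\le\varphi_d$ a.e., and hence convexity via Proposition~\ref{bidual} --- a nice shortcut that avoids introducing $\xi$ and the $\varepsilon$-perturbation for the convexity conclusion.

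The gap is in the last sentence, where you claim "combining the first application of Lemma~\ref{firstlemma} with the identity $\varphi_d=\varphi_d^{\star\star}$ a.e., the $\liminf$ and $\limsup$ coincide." What your chain actually provides is a lower bound $\liminf\ge\sup_{a\in\Theta}|\langle\nabla_\G d_a(x),v\rangle_x|=\xi(x,v)$ and an upper bound $\limsup=\varphi_d^{\star\star}(x,v)$. These are not a priori equal: $\xi\le\varphi_d^{\star\star}$ always holds (both are convex and $\xi\le\varphi_d$), but the reverse inequality $\xi\ge\varphi_d^{\star\star}$ is exactly what one would need to close the squeeze, and nothing in your argument establishes it --- in fact $\xi=\varphi_d^{\star\star}$ a.e.\ is a \emph{consequence} of the theorem, not an available input. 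The paper circumvents this by squeezing both ends around $\xi_\varepsilon$ itself. To repair your argument you would either have to replicate the paper's squeeze around $\xi$, or invoke the general metric-derivative existence theorem (as in \eqref{dmetric}--\eqref{uguaglianza2}) together with a Fubini argument à la Lemma~\ref{FubiniLeb} to see that, for fixed $v$, the limit exists at a.e.\ $x$, and then upgrade to "all $v$" using a countable dense set of directions and the continuity of $\varphi_d(x,\cdot)$ (which you now know is a norm). Neither of these closing steps is present.
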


\begin{proof} Take a countable dense subset $\Theta$ of $\Omega$ and, for each $a \in \Theta$, 
we consider $\Sigma_a$ a negligible Borel subset of $\Omega$ 
which contains all points where $d_a$ is not Pansu-differentiable.
For every $(x, v) \in H\Omega$\, we define
$$\xi(x, v) \coloneqq 
\begin{cases}
\sup_{a \in \Theta} \big| \langle \nabla_{\mathbb{G}}d_a(x), v \rangle_x \big|  \quad\text{ if} \,\; x \in \Omega \setminus \bigcup_{a\in\Theta}\Sigma_a;
\\
0 \quad\mbox{ otherwise}. 
\end{cases}
$$
For $\varepsilon > 0$ we define $\xi_{\varepsilon} : H\Omega \rightarrow [0, + \infty)$ as 
\,$\xi_{\varepsilon}(x, v) \coloneqq \xi(x, v) + \varepsilon\| v\|_x$, that is a Borel measurable function in $H \Omega$
and it is a sub-Finsler convex metric. Indeed, if we take $v_1, v_2 \in H_x \mathbb{G}$ we can estimate in this way
\begin{align*}
\xi_{\varepsilon}(x, v_1 + v_2) &= \sup_{a \in \Theta} \big| \langle \nabla_{\mathbb{G}}d_a(x), v_1 + v_2 \rangle_x \big| 
+ \varepsilon \| v_1 + v_2\|_x  \\
& \leq \xi(x, v_1) + \xi(x, v_2) + \varepsilon \| v_1 + v_2\|_x \leq  \xi_{\varepsilon}(x, v_1) + \xi_{\varepsilon}(x, v_2).
\end{align*}
The homogeneity w.r.t.\ the second variable comes from the equality
$  {\rm d}_e \tau_x [ \delta_\lambda^\star \bar{v}] = \lambda  {\rm d}_e \tau_x[\bar{v}]$
where $ \bar{v} = {\rm d}_x \tau_{x^{-1}}[v]$.
Moreover, if $a \in \Theta$ we get that
$$ \big| \langle \nabla_{\mathbb{G}}d_a(x), v \rangle_x \big| \leq \xi(x, v)
\leq \xi_{\varepsilon}(x, v) \quad\text{for a.e.\ } x \in \Omega \,\;\text{and } v \in H_x \mathbb{G}.$$
Thus, by definition of dual metric, we have
\begin{equation}\label{du}
\frac{\big| \langle \nabla_{\mathbb{G}}d_a(x), v \rangle_x \big|}{\xi_{\varepsilon}(x,v)} \leq 1  \;\; \Rightarrow \;\; 
\big\| \xi_{\varepsilon}^{\star}(x, \nabla_{\mathbb{G}} d_a(x))\big\|_{\infty} \leq 1.
\end{equation}
Being $\Theta$ countable, by Lemma \ref{estimate}, there exists a Lebesgue null set $N \subset \Omega$ such that,
the horizontal curve $\gamma(t) = x \cdot \delta_t e^{\bar{v}}$ belongs to $\mathcal{P}(\Omega, N)$,
and for every small $t >0$ we can infer that 
$$d(x, x \cdot \delta_t e^{\bar{v}}) = d(\gamma(0), \gamma(t)) \leq \int_0^t
\xi_{\varepsilon}(\gamma(s), \dot{\gamma}(s)) \,{\rm d}s.$$
Now, we are in position to apply Lemma \ref{firstlemma} to the metric $\xi_{\varepsilon}$:
for every fixed $a \in \Omega$, a.e.\ $x \in \Omega$ and all $v \in H_x \G$
\[ \big| \langle \nabla_{\mathbb{G}}d_a(x), v \rangle_x \big|
\leq  \liminf_{t \rightarrow 0}\frac{d(x, x \cdot \delta_t e^{\bar{v}})}{t} 
\leq  \limsup_{t \rightarrow 0}\frac{d(x, x \cdot \delta_t e^{\bar{v}})}{t} \leq \xi_{\varepsilon}(x, v).\]
Taking the least upper bound w.r.t.\ $a \in \Theta$ and letting $\varepsilon \rightarrow 0$,
we obtain 
$$\xi(x, v) \leq  \liminf_{t \rightarrow 0} \frac{d(x, x \cdot\delta_t e^{\bar{v}})}{\lvert t \rvert}
\leq \limsup_{t \rightarrow 0} \frac{d(x, x \cdot \delta_t e^{\bar{v}})}{\lvert t \rvert}
\leq \xi(x, v).$$
This proves the convexity of the limit, i.e.\ of the metric derivative on the horizontal bundle.
\end{proof}

\section{Application I: \texorpdfstring{$\Gamma$}{Gamma}-convergence}
We start this section by briefly recalling the notion of $\Gamma$-convergence and we refer the interested reader to \cite{DalMaso} for a complete overview on the subject.
Let $(M, \tau)$ be a topological space satisfying the first axiom of countability. A sequence of maps $F_h:M\to \overline{\mathbb{R}}$ is said to $\Gamma(\tau)$-converge to $F$ and we will write $F_h \xrightarrow{\Gamma(\tau)} F$ ( or simply $F_h \xrightarrow{\Gamma} F$ if there is no risk of confusion) if the following two conditions hold:
\begin{itemize}
\item[] $(${\rm $\Gamma$-$\liminf$ inequality}$)$ for any $x\in M$ and for any sequence $(x_h)_h$ converging to $x$ in $M$ one has
\[
F(x)\le \liminf_{h\to\infty} F_h(x_h)\,;
\]
\item[] $(${\rm $\Gamma$-$\limsup$ inequality}$)$ for any $x\in M$, there exists a sequence $(x_h)_h$ converging to $x$ in $M$ such that 
\[
\limsup_{h\to\infty} F_h(x_h)\le F(x)\,.
\]
\end{itemize}
To any distance $d$ in $\mathcal{D}_{cc}(\Omega)$, we are going to associate some functionals 
defined respectively on the class $\mathcal{B}(\Omega)$ of all positive and 
finite Borel measures $\mu$ on $\Omega \times \Omega$ and on $ \Lip([0,1],\Omega)$.
We set 
\begin{align*}
J_d(\mu) &= \int d(x, y)\,{\rm d} \mu(x, y), \;\;\, \mu \in \mathcal{B}(\Omega) ; \\
L_d(\gamma) &= \int_0^1 \varphi_d(\gamma(t), \dot{\gamma}(t))\,{\rm d}t, \;\;\, 
\gamma \in \Lip([0,1],\Omega).
\end{align*}
As already mentioned in Subsection \ref{s:subFins_duality}, we equip
$\mathcal D_{cc}(\Omega)$ with the topology of the uniform convergence
on compact subsets of $\Omega\times\Omega$. Moreover, we endow
$\mathcal B(\Omega)$ and ${\rm Lip}([0,1],\Omega)$ with the topology
of weak$^*$ convergence and of the uniform convergence, respectively.
\medskip

The following result is strongly inspired by \cite[Theorem 3.1]{BDF}.
\begin{thm}\label{gammaconvergence} Let $\Omega \subset \mathbb{G}$ be an open set in a Carnot group of step $k$ and let $(d_n)_n$ and $d$ belong to $\mathcal{D}_{cc}(\Omega)$.
If $J_n, L_n$ and $J, L$ are the functionals associated respectively to $d_n$ and $d$, defined as before, then the following conditions are equivalent:
\begin{itemize}
\item[(i)] $d_n \rightarrow d$\, in $\mathcal{D}_{cc}(\Omega)$;
\item[(ii)] $J_n \xrightarrow{\Gamma} J$ on $\mathcal{B}(\Omega)$;
\item[(iii)] $L_n \xrightarrow{\Gamma} L$ on $\Lip([0,1],\Omega)$.
\end{itemize}
Moreover, if \(\Omega\) is bounded, then $\rm (i)$, $\rm (ii)$  and $\rm (iii)$ 
are equivalent to the following condition:
\begin{itemize}
\item[(iv)] $J_n$ continuously converges to $J$, meaning that
$J(\mu)=\lim_n J_n(\mu_n)$ holds whenever the sequence
$(\mu_n)_n\subset\mathcal B(\Omega)$ weakly$^*$ converges to
$\mu\in\mathcal B(\Omega)$.
\end{itemize}
\end{thm}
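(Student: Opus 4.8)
The plan is to establish the cycle $(i)\Rightarrow(iii)\Rightarrow(ii)\Rightarrow(i)$, together with a separate argument showing $(i)\Leftrightarrow(iv)$ when $\Omega$ is bounded. The key tool throughout is Theorem \ref{lunghezza}, which lets us move freely between a distance $d$, its metric derivative $\varphi_d$, the length functional $L_d$, and the minimization formula $d(x,y)=\inf\{L_d(\gamma):\gamma(0)=x,\gamma(1)=y\}$. A second recurring point is compactness of $\mathcal D_{cc}(\Omega)$: by \eqref{ineq} every $d_n$ is $\alpha$-biLipschitz to $d_{cc}$, hence the family is equi-Lipschitz and equibounded on compact subsets of $\Omega\times\Omega$, so Arzel\`a--Ascoli gives that any subsequence of $(d_n)_n$ has a further subsequence converging uniformly on compacta to some $d_\infty\in\mathcal D_{cc}(\Omega)$ (the bounds \eqref{ineq} and the geodesic property pass to the limit). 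This reduces several implications to identifying the limit.

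For $(i)\Rightarrow(iii)$: assume $d_n\to d$ uniformly on compacta. For the $\Gamma$-$\liminf$ inequality, given $\gamma_n\to\gamma$ uniformly in $\Lip([0,1],\Omega)$, I would use $L_n(\gamma_n)=L_{d_n}(\gamma_n)\ge d_n(\gamma_n(0),\gamma_n(1))$ only as a first crude bound, but to get the full statement one needs lower semicontinuity of the length: write $L_{d_n}(\gamma_n)=\sup_{\text{partitions}}\sum_i d_n(\gamma_n(t_{i+1}),\gamma_n(t_i))$; for a fixed partition the sum converges to $\sum_i d(\gamma(t_{i+1}),\gamma(t_i))$ by uniform convergence of $d_n$ and continuity of $\gamma$, so $\liminf_n L_{d_n}(\gamma_n)\ge \sum_i d(\gamma(t_{i+1}),\gamma(t_i))$, and taking the sup over partitions yields $\liminf_n L_n(\gamma_n)\ge L_d(\gamma)=L(\gamma)$. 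For the $\Gamma$-$\limsup$ inequality one can simply take the constant recovery sequence $\gamma_n\equiv\gamma$ and show $\limsup_n L_{d_n}(\gamma)\le L_d(\gamma)$; here the cleanest route is a dominated-convergence argument on $\int_0^1\varphi_{d_n}(\gamma(t),\dot\gamma(t))\,{\rm d}t$ using the uniform bound $\varphi_{d_n}(\gamma(t),\dot\gamma(t))\le c\alpha\|\dot\gamma(t)\|_{\gamma(t)}\in L^1(0,1)$ together with pointwise convergence $\varphi_{d_n}(x,v)\to\varphi_d(x,v)$, which in turn follows from uniform convergence of $d_n$ to $d$ near $x$ applied to the dilation curves defining the metric derivative (one must be a little careful, since $\varphi_d$ is a $\limsup$; passing to the limit in the quotients $d_n(x,x\cdot\delta_t e^{\bar v})/|t|$ and then in $t$ should give the equality a.e., which suffices under the integral sign).

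For $(iii)\Rightarrow(ii)$ and $(ii)\Rightarrow(i)$: the bridge is Theorem \ref{lunghezza}, which says $d_n(x,y)$ is the infimum of $L_{d_n}$ over curves joining $x,y$. From $\Gamma$-convergence $L_n\to L$ one deduces, by the standard fact that $\Gamma$-convergence of integral functionals controls convergence of infima over fixed boundary data (using equicoercivity from $L_n(\gamma)\ge\frac1{c\alpha}L_{\bbG}(\gamma)$, which bounds lengths hence, after reparametrization, gives a uniformly convergent subsequence of near-minimizers by Arzel\`a--Ascoli), that $d_n(x,y)\to d(x,y)$ pointwise; combined with the equi-Lipschitz bound this upgrades to uniform convergence on compacta, giving $(i)$. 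To get $(ii)$ from $(iii)$ — equivalently from $(i)$, so one may as well prove $(i)\Rightarrow(ii)$ directly — one notes $J_n(\mu)=\int d_n\,{\rm d}\mu$; for the $\Gamma$-$\liminf$, if $\mu_n\overset{*}{\rightharpoonup}\mu$ then by uniform convergence $d_n\to d$ on the relevant compact sets and weak$^*$ convergence $\int d_n\,{\rm d}\mu_n\to\int d\,{\rm d}\mu$ when masses are controlled; the subtlety when $\Omega$ is unbounded is escape of mass, which is exactly why $(iv)$ is only claimed in the bounded case. For the $\Gamma$-$\limsup$ take the constant recovery sequence $\mu_n\equiv\mu$ and use $\int d_n\,{\rm d}\mu\to\int d\,{\rm d}\mu$ by dominated convergence (valid since $\mu$ is finite and $d_n\le\alpha d_{cc}$ is $\mu$-integrable — here, in the unbounded case, one restricts to $\mu$ for which this holds, or notes $J_n(\mu)=J(\mu)=+\infty$ otherwise). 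When $\Omega$ is bounded, $\overline\Omega\times\overline\Omega$ is compact, all the $d_n$ are uniformly bounded, and weak$^*$ convergence of finite measures together with uniform convergence of the integrands gives continuous convergence $(iv)$ outright; and $(iv)\Rightarrow(ii)$ is immediate since continuous convergence implies $\Gamma$-convergence, while $(ii)\Rightarrow(i)$ was already handled, closing the loop.

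The main obstacle I anticipate is the $\Gamma$-$\liminf$ direction of $(iii)\Rightarrow$ (recovering $d$), i.e.\ showing that $\Gamma$-convergence of the length functionals forces pointwise (hence uniform-on-compacta) convergence of the geodesic distances: this requires an equicoercivity/compactness argument for near-minimizing curves — reparametrizing to constant speed, bounding their lengths uniformly via $L_n(\gamma)\ge\frac1{c\alpha}L_\bbG(\gamma)\ge\frac1{c\alpha}d_{cc}(\gamma(0),\gamma(1))$ and an upper bound from a fixed competitor, then extracting a uniformly convergent subsequence whose limit is admissible for the limit problem and using the $\Gamma$-$\liminf$ inequality to compare energies. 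A secondary delicate point is the a.e.\ identity $\varphi_{d_n}(x,v)=\lim_{t\to0}d_n(x,x\cdot\delta_t e^{\bar v})/|t|\to\varphi_d(x,v)$, needed so that dominated convergence applies along the fixed recovery curve in $(i)\Rightarrow(iii)$; this uses the convexity theorem at the end of Section 3 (which promotes the $\limsup$ to a genuine limit a.e.) together with uniform convergence of $d_n$.
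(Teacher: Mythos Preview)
The main gap is in your $\Gamma$-$\limsup$ argument for (i)$\Rightarrow$(iii). You propose the constant recovery sequence $\gamma_n\equiv\gamma$ and claim that $\varphi_{d_n}(x,v)\to\varphi_d(x,v)$ pointwise (or a.e.) from the uniform convergence $d_n\to d$. This is false in general, and the constant sequence can fail to be a recovery sequence. A clean counterexample already appears in the abelian case $\mathbb G=\mathbb R^2$: take the Riemannian metrics $\varphi_n(x,v)=\sqrt{a(nx_2)v_1^2+v_2^2}$ with $a(s)=1$ on $[0,\tfrac12)$ and $a(s)=2$ on $[\tfrac12,1)$, extended $1$-periodically. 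One checks that $d_n\to d_\psi$ uniformly on compacta for a homogenized norm $\psi$ with $\psi(e_1)=1$ (a short vertical detour of order $1/n$ lets you travel horizontally in a ``good'' strip). But for the fixed curve $\gamma(t)=(t,c)$ with $c$ chosen so that $nc\bmod 1\in[\tfrac12,1)$ infinitely often, one has $L_{d_n}(\gamma)=\sqrt{a(nc)}$, hence $\limsup_n L_{d_n}(\gamma)=\sqrt2>1=L_{d_\psi}(\gamma)$. So the constant sequence is \emph{not} a recovery sequence, and $\varphi_{d_n}(\gamma(t),\dot\gamma(t))=\sqrt{a(nc)}$ does not converge to $\varphi_d(\gamma(t),\dot\gamma(t))=1$. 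The appeal to the convexity theorem of Section~3 (promoting the $\limsup$ to a limit a.e.) does not help: the problem is the interchange $\lim_n\lim_{t\to 0}$, not the existence of the inner limit.

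What the paper does instead is construct a genuine recovery sequence. Given $\gamma$, choose $r(n)\to\infty$ with $r(n)\sup_K|d_n-d|\to 0$, partition $[0,1]$ into $r(n)$ equal pieces with nodes $t_n^i$, and on each subinterval replace $\gamma$ by an almost-$d_n$-geodesic joining $\gamma(t_n^i)$ to $\gamma(t_n^{i+1})$. Then $L_{d_n}(\gamma_n)\le\sum_i d_n(\gamma(t_n^i),\gamma(t_n^{i+1}))+r(n)2^{-r(n)}$, and comparing with $\sum_i d(\gamma(t_n^i),\gamma(t_n^{i+1}))\le L_d(\gamma)$ via $|d_n-d|\le\sup_K|d_n-d|$ gives the $\Gamma$-$\limsup$ inequality. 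A separate estimate, using Theorem~\ref{sea} and the bounds \eqref{ineq}, shows $\gamma_n\to\gamma$ uniformly.

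A smaller point: your cycle (i)$\Rightarrow$(iii)$\Rightarrow$(ii)$\Rightarrow$(i) is not actually closed in your sketch---you argue (iii)$\Rightarrow$(i) and (i)$\Rightarrow$(ii), but never (ii)$\Rightarrow$(i). This is easily fixed by the Urysohn-type argument you allude to under ``compactness of $\mathcal D_{cc}(\Omega)$'': any subsequence has a further subsequence $d_{n_k}\to d'$; by (i)$\Rightarrow$(ii) one gets $J_{n_k}\xrightarrow{\Gamma}J_{d'}$; uniqueness of $\Gamma$-limits forces $J_{d'}=J$, and testing on Dirac masses gives $d'=d$. Your $\Gamma$-$\liminf$ argument for (i)$\Rightarrow$(iii) via partitions, and your treatment of (iii)$\Rightarrow$(i) via equicoercivity and convergence of minima, are both fine and essentially match the paper.
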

\proof 
(i) $\Rightarrow$ (ii). In order to prove the $\Gamma$-lim inf inequality,
fix $\mu\in\mathcal B(\Omega)$ and $(\mu_n)_n\subset\mathcal B(\Omega)$ such
that $\mu_n$ weakly$^*$ converges to $\mu$. Fix a sequence $(\eta_k)_k$ of
compactly-supported continuous functions $\eta_k:\Omega\times\Omega\to[0,1]$
such that $\eta_k(x)\nearrow 1$ for every $x\in\Omega$. Since $d_n\to d$ in
$\mathcal D_{cc}(\Omega)$, we deduce that for any $k\in\mathbb N$ we have that
$\eta_k d_n\to\eta_k d$ uniformly as $n\to\infty$, thus there exists a sequence
$(\varepsilon^k_n)_n\subset(0,+\infty)$ such that $\varepsilon^k_n\searrow 0$
as $n\to\infty$ and $ \lvert \eta_k d_n-\eta_k d \rvert \leq\varepsilon^k_n$ on $\Omega\times\Omega$.
Moreover, since $\mu_n$ weakly$^*$ converges to $\mu$, by using Banach--Steinhaus
Theorem we deduce that $\sup_n\mu_n(\Omega\times\Omega)<+\infty$.
Therefore, since $\eta_k d$ is continuous and bounded, we get that
\[\begin{split}
&\bigg|\int\eta_k(x,y)d_n(x,y)\,{\rm d}\mu_n(x,y)-\int\eta_k(x,y)d(x,y)\,{\rm d}\mu(x,y)\bigg|\\\leq\,&\varepsilon^k_n\,\mu_n(\Omega\times\Omega)+
\bigg|\int\eta_k(x,y)d(x,y)\,{\rm d}\mu_n(x,y)-\int\eta_k(x,y)d(x,y)\,{\rm d}\mu(x,y)\bigg|
\to 0\quad\mbox{as }n\to\infty,
\end{split}\]
for every $k\in\mathbb N$. In particular, for any $k\in\mathbb N$ we have that
\[
\int\eta_k(x,y)d(x,y)\,{\rm d}\mu(x,y)=\lim_{n\to\infty}\int\eta_k(x,y)d_n(x,y)
\,{\rm d}\mu_n(x,y)\leq\liminf_{n\to\infty}J_n(\mu_n).
\]
By monotone convergence theorem, we conclude that $J(\mu)\leq\liminf_n J_n(\mu_n)$,
as desired.

Let us pass to the verification of the $\Gamma$-lim sup inequality.
Fix any $\mu\in\mathcal B(\Omega)$. We aim to show that the sequence
constantly equal to $\mu$ is a recovery sequence, namely
$J(\mu)\geq\limsup_n J_n(\mu)$. If $J(\mu)=+\infty$, then there is nothing
to prove. Thus suppose that $J(\mu)<+\infty$.\\
Since $( 1/\alpha) d_{cc}\leq d$,
we deduce that $d_{cc}\in L^1(\mu)$. By combining this information with
the fact that $d_n\leq\alpha d_{cc}$ for all $n\in\mathbb N$ and $d_n\to d$
pointwise on $\Omega\times\Omega$, we are in a position to apply the dominated
convergence theorem, obtaining that $J(\mu)=\int d(x,y)\,{\rm d}\mu(x,y)=
\lim_n\int d_n(x,y)\,{\rm d}\mu(x,y)=\lim_n J_n(\mu)$.\\
(i) $\Rightarrow$ (iii). For every $\gamma \in \Lip([0,1],\Omega)$, we have to prove the following two claims:
\begin{align}
\label{eq4}
&\forall \; \gamma_n \xrightarrow{} \gamma \;\; : \;\;
L_d(\gamma) \leq \liminf_{n \rightarrow \infty} L_{d_n}(\gamma_n),\\
\label{eq5}
&\exists \; \gamma_n \xrightarrow{} \gamma \;\; : \;\;
L_d(\gamma) \geq \limsup_{n \rightarrow \infty} L_{d_n}(\gamma_n).
\end{align}
We begin proving \eqref{eq4}. Let $\gamma_n \rightarrow \gamma$ in $\Lip([0,1],\Omega)$. By definition of $L_d(\gamma)$, for any $\delta \geq 0$ we can find a partition of $[0,1]$, indexed over a finite set $I_{\delta}$, such that
\begin{equation}
\label{eq3}
L_d(\gamma) \leq \delta + \sum_{i \in I_{\delta}} d(\gamma(t_i), \gamma(t_{i + 1})).
\end{equation}
Since $\{\gamma_n\}_{n \in \mathbb{N}}$ converges uniformly on $[0,1]$, we may assume that 
$$\exists \;\,\bar{n} \in \mathbb{N} \;\; :\;\; (\gamma_n(s), \gamma_n(t)) \in K \subset \Omega \times \Omega, \;\; \forall s,t \in [0,1], \;\;\forall n \geq \bar{n},$$
where $K$ is compact. Then, for every $i \in I_{\delta}$,
\begin{align*}
&\lvert d_n( \gamma_n(t_i), \gamma_n(t_{i + 1})) - d(\gamma(t_i), \gamma(t_{i + 1})) \rvert\\ \leq  &\,\lvert d_n(\gamma_n(t_i), \gamma_n(t_{i + 1})) - d(\gamma_n(t_i), \gamma_n(t_{i + 1})) \rvert
+ \lvert d(\gamma_n(t_i), \gamma_n(t_{i + 1})) - d(\gamma(t_i), \gamma(t_{i + 1})) \rvert \\
\leq\,&\sup_K \lvert d_n - d \rvert + d(\gamma(t_i),\gamma_n(t_i))+d(\gamma(t_{i+1}),\gamma_n(t_{i+1}))\\
\leq\,&\sup_K \lvert d_n - d \rvert +\alpha \Bigl( d_{cc}(\gamma(t_i),\gamma_n(t_i))+ d_{cc}(\gamma(t_{i+1}),\gamma_n(t_{i+1}))
\Bigr)\\
\leq\,&\sup_K \lvert d_n - d \rvert +\alpha C_{K'} \Bigl( \lvert \gamma(t_i)-\gamma_n(t_i) \rvert^{\frac{1}{k}}+\lvert \gamma(t_{i+1})-\gamma_n(t_{i+1}) \rvert^{\frac{1}{k}} \Bigr)\\
\leq\,&\sup_K \lvert d_n - d \rvert +2\alpha C_{K'} \sup_{[0,1]}\lvert \gamma-\gamma_n \rvert^{\frac{1}{k}}=:\xi_n,
\end{align*}
where $K'\subset\Omega$ is any compact set such that $K\subset K'\times K'$
and $C_{K'}$ is the constant provided by Theorem \ref{sea}.
Note that $\xi_n\to 0$.
We infer from \eqref{eq3} and from the definition of $L_{d_n}$ that
$$L_d(\gamma) \leq \delta + \sum_{i \in I_{\delta}} [d_n(\gamma_n(t_i), \gamma_n(t_{i + 1})) + \xi_n] \leq \delta + L_{d_n}(\gamma_n) + \xi_n \card(I_{\delta}).$$
Passing to the lim inf as $n \rightarrow + \infty$, we get
$$L_d(\gamma) \leq \liminf_{n \rightarrow \infty} L_{d_n}(\gamma_n) + \delta.$$
This yields \eqref{eq4} by the arbitrariness of $\delta > 0$.\\

We prove now \eqref{eq5}. Let $\gamma \in \Lip(\Omega)$, let $K \subset \Omega \times \Omega$ compact be chosen as above, and let $r(n) \rightarrow \infty$ be a sequence such that
$$\lim_{n \rightarrow \infty} r(n)\, \sup_K \lvert d_n - d \rvert = 0.$$
For every $n \in \mathbb{N}$, let $I_n$ be the partition of $[0,1]$ into $r(n)$ intervals of equal length, and denote by $\{t_n^i\}, i=1, \ldots, r(n) + 1$, the endpoints of such intervals.
Let $\gamma_n$ be a curve whose restriction $\gamma_n^i$ to the interval $[t_n^i, t_n^{i + 1}]$ is defined by
\begin{equation}\label{construction}
\gamma_n^i(t_n^i) = \gamma(t_n^i), \;\; \gamma_n^i(t_n^{i + 1}) = \gamma(t_n^{i + 1}),
\;\;\; L_{d_n}(\gamma_n^i) \leq d_n(\gamma(t_n^i), \gamma(t_n^{i + 1})) + \frac{1}{2^{r(n)}}.
\end{equation}
\textbf{Claim:} The sequence $(\gamma_n)_{n \in \mathbb{N}}$ converges to $\gamma$ in $\Lip([0,1],\Omega)$. \\

Let us prove the claim. Fix a compact set \(K\subset\Omega\) such that
$\gamma(t),\gamma_n(t)\in K$ for every $n\in\mathbb N$ and $t\in[0,1]$.
Given any $n\in\mathbb{N}$ and $t\in(0,1]$, we denote by $(t_n^{-}, t_n^{+}]$
the interval of $I_n$ containing $t$. Consider the constant $C_K$ given by
Theorem \ref{sea}. Then it holds that
$$ \frac{1}{C_K} \lvert \gamma_n(t) - \gamma(t) \rvert \leq d_{cc}(\gamma_n(t), \gamma(t)) \leq d_{cc}(\gamma_n(t), \gamma_n(t_n^{+})) +
d_{cc}(\gamma(t_n^{+}), \gamma(t)) \eqqcolon A_n + B_n.$$
Now, by the uniform continuity of $\gamma$ on $[0,1]$, the term $B_n$ tends to zero as 
$n \rightarrow + \infty$ uniformly with respect to $t$. The same holds for $A_n$, since (using Lemma \ref{estimate}) it can be estimated as
\begin{align*}
\frac{1}{\alpha C_K} \lvert \gamma_n(t) - \gamma_n(t_n^+) \rvert &\leq 
\frac{1}{\alpha} d_{cc}(\gamma_n(t), \gamma_n(t_n^+)) \leq d_n(\gamma_n(t), \gamma_n(t_n^+))
\leq L_{d_n} (\gamma_n|_{[t, t_n^{+}]}) \\
&\leq L_{d_n} (\gamma_n|_{[t_n^{-}, t_n^{+}]}) \leq 
\alpha d_{cc}(\gamma_n(t_n^{-}), \gamma_n(t_n^+)) + \frac{1}{2^{r(n)}} \\
&= \alpha d_{cc}(\gamma(t_n^{-}), \gamma(t_n^+)) + \frac{1}{2^{r(n)}}
\leq \alpha \cdot C_K \lvert \gamma(t_n^{-}) - \gamma(t_n^+) \rvert^{\frac{1}{k}} + \frac{1}{2^{r(n)}},
\end{align*}
where we used the continuity estimate (\ref{construction}) and the fact that $d_n \in \mathcal{D}_{cc}(\Omega)$.
Now, by definition of $L_d(\gamma)$ and the construction (\ref{construction}), we infer that
\begin{align*}
L_d(\gamma) &\geq \sum_{i = 1}^{r(n)} d(\gamma(t_n^i), \gamma(t_n^{i + 1})) \\
&= \sum_{i = 1}^{r(n)} d_n(\gamma(t_n^i), \gamma(t_n^{i + 1}))
+ \sum_{i = 1}^{r(n)} [ d(\gamma(t_n^i), \gamma(t_n^{i + 1})) - d_n(\gamma(t_n^i), \gamma(t_n^{i + 1}))]  \\
&\geq L_{d_n}(\gamma_n) - \frac{r(n)}{2^{r(n)}} + \sum_{i = 1}^{r(n)} [ d(\gamma(t_n^i), \gamma(t_n^{i + 1})) - d_n(\gamma(t_n^i), \gamma(t_n^{i + 1}))].
\end{align*}
To get the required inequality, it is enough to pass to the limsup in the above inequality, noticing that, by the choice of the sequence $r(n)$,
we have
$$\lim_{n \rightarrow + \infty} \sum_{i = 1}^{r(n)} [ d(\gamma(t_n^i), \gamma(t_n^{i + 1})) - d_n(\gamma(t_n^i), \gamma(t_n^{i + 1}))] \leq
\lim_{n \rightarrow + \infty} r(n) \sup_K \lvert d_{n} - d \rvert = 0,$$
and then we get the desired conclusion (\ref{eq5}).\\
(iii) $\Rightarrow$ (i). This implication follows from the following fact:\\
 
\textbf{Claim:} The class $\mathcal{D}_{cc}(\Omega)$ is compact.\\

As we are going to show, the above claim is obtained as a consequence
of the Ascoli--Arzel\'a Theorem and the implication (i) $\Rightarrow$ (iii) already proved.
Let $(d_n)_n\subset\mathcal D_{cc}(\Omega)$ be a given sequence.
First of all, for any $(x,y)\in\Omega\times\Omega$ we have that $(d_n(x,y))_n$
is a bounded sequence, as granted by the following estimate:
\begin{equation}
d_n(x, y) \leq \alpha d_{cc}(x, y) \quad\text{for every } x,y \in \Omega \quad\text{and } n \in \mathbb{N}.
\end{equation}
Moreover, we have to prove that the sequence $(d_n)_n \in \mathcal{D}_{cc}(\Omega)$ is equi-continuous, in other words, that
for every $x, x', y, y' \in \Omega \subset \mathbb{G}$ it holds
$$\forall \varepsilon > 0 \;\; \exists\,\delta > 0 \; : 
\begin{cases}
\lvert x' - x \rvert < \delta \\
\lvert y' - y \rvert < \delta        
\end{cases}
\Rightarrow \; \lvert d_n(x, x') - d_n(y, y')\rvert < \varepsilon, \;\; \forall n \in \mathbb{N}.$$
By using the triangle inequality and Theorem \ref{sea}, we obtain that
\begin{align*}
\lvert d_n(x, y) - d_n(x', y')\rvert &\leq d_n(x, x') + d_n(y, y')
 \leq \alpha \bigl( d_{cc}(x, x') + d_{cc}(y, y') \bigr)\\
& \leq \alpha C_K\bigl( \lvert x' - x\rvert^{\frac{1}{k}} + \lvert y'-y \rvert^{\frac{1}{k}} \bigr) .
\end{align*}
Choosing $\delta = 2\frac{\epsilon^k}{C_K\beta}$, we obtain
\begin{equation*}
\lvert d_n(x, y) - d_n(x', y')\rvert \leq  \varepsilon.
\end{equation*}
Hence, we may extract a subsequence converging to some element $d$ in $\mathcal{D}_{cc}(\Omega)$.

To prove that $d$ is geodesic, we use the implication (i) $\Rightarrow$ (iii), which ensures that 
$L_n \xrightarrow{\Gamma} L$.
Fix $x,y\in\Omega$.
We will prove that we have the $\Gamma$-convergence for the modified functionals:
$$ 
\tilde{L}_n(\gamma)\coloneqq
\begin{cases}
L_n(\gamma),\quad\mbox{if }\gamma(0) = x \mbox{ and }\gamma(1)=y; \\
+\infty,\quad\mbox{otherwise};\\
\end{cases} 
$$
\[ \tilde{L}(\gamma)\coloneqq
\begin{cases}
L(\gamma),\quad\mbox{if }\gamma(0) = x\mbox{ and }\gamma(1)=y;\\
+\infty,\quad\mbox{otherwise}.\\
\end{cases} 
\]
Arguing as in \cite[Theorem 3.1]{BDF}, we can show that $\liminf_{n \rightarrow \infty} \tilde{L}_n(\gamma_n) \geq \tilde{L}(\gamma)$  whenever $\gamma_n \rightarrow \gamma$ in
$\Lip(\Omega)$.
To conclude we need to prove that, for every $\gamma \in \Lip([0,1],\Omega)$,
there exists an approximating sequence $\{\tilde{\gamma}_n\}$ satisfying 
$\limsup_n\tilde{L}_n(\tilde{\gamma}_n) \leq \tilde{L}(\gamma)$.
We can assume without loss of generality that $\tilde{L}(\gamma) = L(\gamma)$.
Take a sequence $(\gamma_n)_{n \in \mathbb{N}}$ with $\gamma_n \rightarrow \gamma$ in $\Lip([0,1],\Omega)$ and since $L_n \xrightarrow{\Gamma} L$ we can suppose that
$\lim_n L_n(\gamma_n) = L(\gamma)$. One can construct the optimal sequence
modifying the curves as follows: 
\begin{align*}
\tilde{\gamma}_n(t) \coloneqq
\begin{cases}
\mbox{an almost }d_n\mbox{-geodesic connecting }x\mbox{ and }\gamma_n(\frac{1}{n}),
\quad\mbox{if }t \in [0, \frac{1}{n}];\\
\gamma_n(t),\quad\mbox{if }t \in [\frac{1}{n}, 1 {-}\frac{1}{n}];\\
\mbox{an almost }d_n\mbox{-geodesic connecting }\gamma_n(1 {- \frac{1}{n}})
\mbox{ and }y,\quad\mbox{if }t \in [1 {-} \frac{1}{n}, \frac{1}{n}].
\end{cases}
\end{align*}
Similarly to \cite[Theorem 3.1]{BDF}, and using again Lemma \ref{estimate} we get that $\tilde{\gamma}_n$ still converges to $\gamma$ in $\Lip(\Omega)$ and  \begin{align}\label{disss}
\frac{1}{\alpha} d_{cc} \left(\tilde{\gamma}_n(t), x \right)  \leq \alpha \cdot C_K \left[ \left|\gamma_n \left({\textstyle\frac{1}{n}}\right) - \gamma \left({\textstyle\frac{1}{n}}\right) \right| + \left|\gamma \left({\textstyle\frac{1}{n}}\right) - x \right| \right]^{\frac{1}{k}} 
+ \varepsilon_n
\end{align}
holds, where the last term tends to zero as $n \rightarrow \infty$, since $\gamma_n \rightarrow \gamma$ in $\Lip(\Omega)$.
It remains to show that $\limsup_n \tilde{L}_n(\tilde{\gamma}_n) \leq \tilde{L}(\gamma)$, indeed 
we have that
\begin{equation}
\label{eq7}
\tilde{L}_n(\tilde{\gamma}_n) \leq d_n(x, \gamma_n \left({\textstyle\frac{1}{n}}\right))
+ L_n(\gamma_n) + d_n(\gamma_n \left({\textstyle 1 - \frac{1}{n}}\right), y) + 2 \varepsilon_n.
\end{equation}
Notice now that, from (\ref{disss}), it follows in particular that
$\lim_n d_n(x, \gamma_n \left( \frac{1}{n}\right)) = 0$ and similarly
 $\lim_n d_n( \gamma_n \left(1 - \frac{1}{n}\right), y) = 0$. Hence passing to the lim sup as $n \rightarrow \infty$
in \eqref{eq7} gives
\[\limsup_{n \rightarrow \infty} \tilde{L}_n(\tilde{\gamma}) \leq \limsup_{n \rightarrow \infty} L(\gamma_n) = L(\gamma) = \tilde{L}(\gamma).\]
Thus, by the $\Gamma$-convergence of $L_n$ to $L$, we deduce that
\begin{equation}
\label{eq8}
\inf_{\gamma} \tilde{L}(\gamma) = \lim_{n \rightarrow \infty} \inf_{\gamma}
\tilde{L}_n(\gamma)
= \lim_{n \rightarrow \infty} d_n(x, y) = d(x, y).
\end{equation}
Since $d_n$ are geodesic distances in $\mathcal{D}_{cc}(\Omega)$, the equation \eqref{eq8} means exactly that $d$ is a geodesic distance, as desired.
\medskip

Finally, assume in addition that \(\Omega\) is bounded. On the one hand,
(iv) trivially implies (ii). On the other hand, we can prove that (i) implies (iv).
To this aim, fix any $\mu\in\mathcal B(\Omega)$ and $(\mu_n)_n\subset\mathcal B(\Omega)$
such that $\mu_n$ weakly$^*$ converges to $\mu$. Let \(\varepsilon>0\) be fixed.
We have that \(\sup_n\mu_n(\Omega\times\Omega)<+\infty\) by Banach--Steinhaus
Theorem. Moreover, we have that $\{\mu_n\}_n$ is weakly$^*$ relatively compact
by assumption, thus Prokhorov's Theorem yields the existence of a compact set
$K\subset\Omega\times\Omega$ such that $\mu_n((\Omega\times\Omega)\setminus K)
\leq\varepsilon$ for every $n\in\mathbb N$. Call $D$ the diameter of $\Omega$
with respect to $d_{cc}$. Since $d:\Omega\to\Omega\to\mathbb R$
is bounded and continuous, we deduce that
\[\begin{split}
\big|J_n(\mu_n)-J(\mu)\big|&\leq\int_K \lvert d_n-d \rvert\,{\rm d}\mu_n+
\int_{(\Omega\times\Omega)\setminus K} \lvert d_n-d \rvert\,{\rm d}\mu_n+
\big|J(\mu_n)-J(\mu)\big|\\
&\leq\mu_n(\Omega\times\Omega)\max_K|d_n-d|+2\beta D\varepsilon+
\bigg|\int d\,{\rm d}\mu_n-\int d\,{\rm d}\mu\bigg|,
\end{split}\]
whence by letting \(n\to\infty\) we get
$\limsup_n \lvert J_n(\mu_n)-J(\mu) \rvert \leq 2\beta D\varepsilon\).
By arbitrariness of $\varepsilon$, we finally conclude that
$J(\mu)=\lim_n J_n(\mu_n)$, so that $(iv)$ is proved.
\endproof

\section{Applications II: Intrinsic geometry and sub-Finsler structure}

The present section is devoted to generalizing the metric results contained in \cite{GUO}. To this aim, we introduce two distances which involve the structure of the sub-Finsler metric.

\begin{defn} If $\varphi \in \mathcal{M}_{cc}^\alpha(\mathbb{G})$ is a sub-Finsler convex metric, for
every \(x,y\in\G\) we define the following quantity:
\begin{equation}\label{lipdist}
\delta_{\varphi}(x, y) \coloneqq \sup\big\{ \lvert f(x) - f(y) \rvert \, \big| \,f\colon\G\rightarrow\mathbb R\emph{ Lipschitz},\, 
\norm{\varphi(\cdot, \nabla_{\mathbb{G}} f (\cdot))}_{\infty} \leq 1 \big\}.
\end{equation}
\end{defn}
Recall that Pansu's Theorem assures that $\nabla_{\mathbb{G}} f(x)$ exists at almost every $x \in \mathbb{G}$ and thus
the above definition makes sense. From now on, we will say that any Lipschitz function satisfying the conditions in 
\eqref{lipdist} is a \emph{competitor} for $\delta_{\varphi}$. Moreover, we have that
\begin{lem} $\delta_{\varphi} : \mathbb{G} \times \mathbb{G} \rightarrow [0, + \infty)$ is a distance.
\end{lem}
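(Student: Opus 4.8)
The plan is to check the four axioms of a distance, noting that all but two of them are immediate, and to reduce the remaining content (finiteness and positivity) to the two–sided estimate
$$\frac{1}{\alpha}\,d_{cc}(x,y)\le\delta_{\varphi}(x,y)\le\alpha\,d_{cc}(x,y)\qquad\text{for every }x,y\in\G.$$
Indeed, $\delta_{\varphi}\ge 0$ by definition, $\delta_{\varphi}(x,x)=0$ since every competitor contributes $0$, and symmetry is clear because $\lvert f(x)-f(y)\rvert=\lvert f(y)-f(x)\rvert$. The triangle inequality follows by fixing a competitor $f$ for $\delta_{\varphi}$, writing $\lvert f(x)-f(z)\rvert\le\lvert f(x)-f(y)\rvert+\lvert f(y)-f(z)\rvert\le\delta_{\varphi}(x,y)+\delta_{\varphi}(y,z)$, and taking the supremum over all such $f$. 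What remains is that $\delta_{\varphi}(x,y)<+\infty$ and that $\delta_{\varphi}(x,y)>0$ when $x\ne y$, both of which are consequences of the displayed sandwich.

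For the upper bound (hence finiteness), I would first observe that any competitor $f$ satisfies $\lVert\nabla_{\mathbb{G}} f\rVert_{\infty}\le\alpha$: by property $(3)$ of Definition \ref{defF} we have $\tfrac1\alpha\lVert\nabla_{\mathbb{G}} f(z)\rVert_{z}\le\varphi(z,\nabla_{\mathbb{G}} f(z))\le 1$ for a.e.\ $z\in\G$. Next I would use the identification between the Lipschitz constant of $f$ with respect to $d_{cc}$ and $\lVert\nabla_{\mathbb{G}} f\rVert_{\infty}$; the non-trivial inequality, namely that an $L^{\infty}$ bound on the horizontal gradient forces $f$ to be $\lVert\nabla_{\mathbb{G}} f\rVert_{\infty}$-Lipschitz with respect to $d_{cc}$, is obtained by joining $x$ and $y$ with a horizontal curve of length close to $d_{cc}(x,y)$ that avoids the $\mathcal L^{n}$-negligible non-differentiability set $N$ of $f$ (using the tools behind $\mathcal P(\Omega,N)\ne\emptyset$, cf.\ \cite[Lemma 2.2]{DCP4}) and integrating the chain rule $\tfrac{\rm d}{{\rm d}t}f(\gamma(t))=\langle\nabla_{\mathbb{G}} f(\gamma(t)),\dot\gamma(t)\rangle_{\gamma(t)}$ along it, bounding the integrand by $\lVert\nabla_{\mathbb{G}} f\rVert_{\infty}\,\lVert\dot\gamma(t)\rVert_{\gamma(t)}$. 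This yields $\lvert f(x)-f(y)\rvert\le\alpha\,d_{cc}(x,y)$, and passing to the supremum over competitors gives $\delta_{\varphi}(x,y)\le\alpha\,d_{cc}(x,y)$, which is finite by Chow--Rashevskii's Theorem.

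For the lower bound (hence strict positivity off the diagonal) I would exhibit the explicit competitor $f(z):=\tfrac1\alpha\,d_{cc}(z,x)$. It is $\tfrac1\alpha$-Lipschitz with respect to $d_{cc}$, so the easy implication (which uses only the dilation curves $t\mapsto x\cdot\delta_{t}e^{\bar v}=x\cdot e^{t\bar v}$, of constant speed $\lVert v\rVert_{x}$, together with \eqref{diff2}, by testing it on $v=\nabla_{\mathbb{G}} f(z)$) gives $\lVert\nabla_{\mathbb{G}} f(z)\rVert_{z}\le\tfrac1\alpha$ for a.e.\ $z$; hence $\varphi(z,\nabla_{\mathbb{G}} f(z))\le\alpha\lVert\nabla_{\mathbb{G}} f(z)\rVert_{z}\le 1$ for a.e.\ $z$, so $f$ is indeed a competitor for $\delta_{\varphi}$. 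Therefore $\delta_{\varphi}(x,y)\ge\lvert f(x)-f(y)\rvert=\tfrac1\alpha\,d_{cc}(x,y)$, which is strictly positive whenever $x\ne y$. Collecting all of the above, $\delta_{\varphi}$ is a finite, symmetric, non-negative map that vanishes exactly on the diagonal and satisfies the triangle inequality, i.e.\ a distance. The only genuinely non-soft step is the inequality $\Lip_{d_{cc}}(f)\le\lVert\nabla_{\mathbb{G}} f\rVert_{\infty}$, i.e.\ upgrading an almost-everywhere horizontal-gradient bound to a global metric Lipschitz bound while controlling the non-differentiability set; the remaining arguments are routine.
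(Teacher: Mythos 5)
Your proof is correct and follows the same basic axiom-checking route as the paper's, but you supply substantially more detail. The paper's own proof is terse: it verifies non-negativity, symmetry, and the triangle inequality exactly as you do, but it simply asserts ``Clearly\dots $\delta_\varphi(x,y)>0$ if $x\ne y$'' without argument, and it does not discuss finiteness at all (even though finiteness is implicit in the statement's codomain $[0,+\infty)$). You, in contrast, actually establish the two-sided estimate $\tfrac1\alpha d_{cc}\le\delta_\varphi\le\alpha d_{cc}$, which is the genuine content underlying both of these claims. Your lower bound via the explicit competitor $f(\cdot)=\tfrac1\alpha d_{cc}(\cdot,x)$ is exactly what the paper is implicitly invoking, and your upper bound via the implication $\|\nabla_{\G}f\|_\infty\le L\Rightarrow\Lip_{d_{cc}}(f)\le L$ (integrating the chain rule along a nearly minimizing horizontal curve avoiding the non-differentiability set) is precisely the mechanism the paper uses later, in Lemma~\ref{estimate}. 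In short, same approach, but your version is a complete proof of the statement, whereas the paper leaves the positivity and finiteness as unproven assertions.
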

\begin{proof} Clearly, we have that $\delta_{\varphi}(x, y) \geq 0$
for every $
x,y\in\mathbb{G}$ and $\delta_\varphi(x,y)>0$ if $x \neq y$.
The symmetry comes from the fact that 
$ \lvert f(x) - f(y) \rvert = \lvert f(y) - f(x) \rvert$. Also,
$\delta_{\varphi}$ satisfies the triangle inequality since for every $x, y, z \in \mathbb{G}$ we have
$\delta_{\varphi}(x, y) + \delta_{\varphi}(y,z) \geq \lvert f(x) - f(y) \rvert + \lvert f(y) - f(z)\rvert 
\geq \lvert f(x) - f(z) \rvert$. Passing to the supremum
on the right-hand side for every $f$ Lipschitz function such that $\norm{\varphi(x, \nabla_{\mathbb{G}} f (x))}_{\infty} \leq 1$, we get that $\delta_{\varphi}(x, y) + \delta_{\varphi}(y,z) \geq \delta_{\varphi}(x, z)$.
\end{proof}
Under some assumptions, we will show in Theorem \ref{maintheorem} that $\delta_\varphi$ turns out to be a distance in $\mathcal{D}_{cc}(\G)$. 
\begin{defn}
The \emph{pointwise Lipschitz constant} of a Lipschitz function $f : \mathbb{G} \rightarrow \mathbb{R}$ is defined as
$$ \Lip_{\delta_{\varphi}} f(x) = \limsup_{y \rightarrow x} \frac{ \lvert f(y) - f(x) \rvert}{\delta_{\varphi}(x, y)}
\quad\mbox{for every }x\in\G.$$
\end{defn}
We recall now the notion of intrinsic distance that was introduced by De Cecco--Palmieri in \cite[Definition 1.4]{DCP4} in the context of Lipschitz manifold.

\begin{defn}\label{intrinsic}
Given any $\varphi \in \mathcal{M}_{cc}^\alpha(\mathbb{G})$, we define its induced \emph{intrinsic distance} $d_\varphi$ as
\[
d_\varphi(x,y)\coloneqq\inf_\gamma\int_0^1\varphi(\gamma(t),\dot\gamma(t))  
\,{\rm d}t\quad\text{ for every }x,y\in\G,
\]
where the infimum is taken over all horizontal curves $\gamma \in \mathcal{H}([0,1], \mathbb{G})$ joining $x$ and $y$.
\end{defn}
The quantity \(d_\varphi(x,y)\) is well-defined because the map $t \mapsto (\gamma(t), \dot{\gamma}(t))$ is Borel
measurable on the horizontal bundle. \\
Let us observe that in Definition \ref{defF} we are not requiring any regularity on $\varphi$ besides its Borel measurability.
At this level of generality (namely, without semicontinuity assumptions) $d_\varphi$ might
exhibit some `pathological' behaviour, as we can see in the following example. 
Others examples of  geodesic distances, contained in $\mathcal{D}_{cc}(\mathbb{R}^2)$, which are not intrinsics can be found in \cite[Example 1.8]{rif14} and 
\cite[Corollary 3.4]{rif15}.
\begin{ex}
{ \rm Let $\mathbb R^2$ with the Euclidean structure and consider the Borel set $N\subset\mathbb R^2$ as 
$$ N\coloneqq\bigcup_{x,y\in\mathbb Q^2}S_{x,y},$$ 
where $S_{x,y}$ stands for
the segment joining $x$ and $y$. Notice that $N$ is $\mathcal L^2$-negligible and we define the metric
$\varphi\colon\mathbb R^2\times\mathbb R^2\to[0,+\infty)$ as
\[
\varphi(x,v)\coloneqq\left\{\begin{array}{ll}
\lvert v \rvert,\\
2 \lvert v \rvert,
\end{array}\quad\begin{array}{ll}
\text{ if }x\in N,\\
\text{ if }x\notin N.
\end{array}\right.
\]
Since $\varphi\in\mathcal M_{cc}^2(\mathbb R^2)$, for every $x,y\in\mathbb R^2$ it holds that $|x-y|\leq d_\varphi(x,y)\leq 2|x-y|$.
In particular $d_\varphi\colon\mathbb R^2\times\mathbb R^2\to[0,+\infty)$ is continuous when the domain is endowed with the Euclidean distance.
Now observe that for any $x,y\in\mathbb Q^2$ we have that $d_\varphi(x,y)=|x-y|$, the shortest path being exactly the segment $S_{x,y}$. By continuity
of $d_\varphi$ and thanks to the density of $\mathbb Q^2$ in $\mathbb R^2$, we conclude that $d_\varphi(x,y)=|x-y|$ for every $x,y\in\mathbb R^2$.
This shows that, even if $\varphi(x,\cdot)$ is equal to $2|\cdot|$ for $\mathcal L^2$-a.e.\ $x\in\mathbb R^2$, the distance $d_\varphi$ coincides
with the Euclidean distance. In other words, the behaviour of $\varphi$ on the null set $N$ completely determines the induced distance $d_\varphi$.}
\end{ex}

A further important concept for our treatment is the classical notion of Finsler metrics on Carnot 
groups.
\begin{defn}\label{intrinsic_F}
We say that a map $F : T \mathbb{G} \rightarrow [0, + \infty)$ is a 
\emph{Finsler metric}
if
the following properties hold:
\begin{itemize}
\item{} $F$ is continuous on $T\G$ and smooth on $ T \mathbb{G} \setminus \{0\}$,
\item{} 
the Hessian matrix of $F^2$ is positive definite for any vector $v \in T_x \mathbb{G} \setminus \{0\}$
for every $x\in\G$.
\end{itemize} 
Moreover, we denote by $d_F$ the length distance on $\G$ induced
by $F$, namely we set
\[
d_{F}(x, y):=\inf_\gamma\int_0^1 F(\gamma(t),\dot{\gamma}(t)) \,{\rm d}t \quad\mbox{for every }x,y\in\G,
\]
where the infimum is taken among all curves $\gamma\in{\rm Lip}([0,1],\G)$
joining $x$ and $y$.
\end{defn}

Let us observe that the intrinsic distance is induced by a metric on the horizontal bundle while the latter comes from a metric defined on the entire tangent bundle.
 
\begin{lem} If $\varphi \in \mathcal{M}_{cc}^\alpha(\mathbb{G})$ is a sub-Finsler convex metric, 
then $d_{\varphi}$ 
is a geodesic distance belonging to $\mathcal{D}_{cc}(\mathbb{G})$.
\end{lem}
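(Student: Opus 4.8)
The strategy is to check separately the two requirements in the definition of $\mathcal{D}_{cc}(\mathbb{G})$: that $d_\varphi$ is a distance satisfying the two-sided comparison \eqref{ineq}, and that it is geodesic. For the comparison, I would integrate the pointwise bound in property $(3)$ of Definition \ref{defF} along an arbitrary horizontal curve $\gamma\colon[0,1]\to\mathbb{G}$, obtaining $\frac{1}{\alpha}L_{\bbG}(\gamma)\le\int_0^1\varphi(\gamma(t),\dot\gamma(t))\,{\rm d}t\le\alpha L_{\bbG}(\gamma)$, and then pass to the infimum over all horizontal curves joining $x$ and $y$; by Definition \ref{carnotdistance} this yields $\frac{1}{\alpha}d_{cc}(x,y)\le d_\varphi(x,y)\le\alpha d_{cc}(x,y)$ for all $x,y\in\mathbb{G}$, that is, exactly \eqref{ineq}. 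In particular $d_\varphi$ is finite by Chow--Rashevskii, strictly positive off the diagonal, and bi-Lipschitz equivalent to $d_{cc}$. Symmetry follows because reversing a connecting curve produces another horizontal curve and, by property $(2)$ with $\lambda=-1$, one has $\varphi(x,-v)=\varphi(x,v)$, so $\int_0^1\varphi(\gamma,\dot\gamma)\,{\rm d}t$ is unchanged under $t\mapsto1-t$; the triangle inequality follows by concatenating curves, the $1$-homogeneity in property $(2)$ guaranteeing that $\int_0^1\varphi(\gamma,\dot\gamma)\,{\rm d}t$ is invariant under Lipschitz reparametrizations. Hence $d_\varphi$ is a distance on $\mathbb{G}$.

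For the geodesic character I would show that $d_\varphi$ is a length distance, namely that it agrees with $\hat d_\varphi(x,y):=\inf\{L_{d_\varphi}(\gamma):\gamma\text{ joins }x\text{ and }y\}$, where $L_{d_\varphi}$ is the metric length functional induced by $d_\varphi$. Since $d_\varphi$ is bi-Lipschitz to $d_{cc}$, rectifiable curves for $d_\varphi$ are, up to reparametrization, precisely the horizontal curves (Proposition \ref{LipHor}), and they all have finite $L_{d_\varphi}$-length. The inequality $\hat d_\varphi\ge d_\varphi$ is immediate from $L_{d_\varphi}(\gamma)\ge d_\varphi(\gamma(0),\gamma(1))$. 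For the reverse inequality, given a horizontal curve $\gamma$ from $x$ to $y$ and a partition $0=t_0<\dots<t_N=1$, restricting the defining infimum of $d_\varphi$ to the subarcs gives $\sum_i d_\varphi(\gamma(t_i),\gamma(t_{i+1}))\le\sum_i\int_{t_i}^{t_{i+1}}\varphi(\gamma,\dot\gamma)\,{\rm d}t=\int_0^1\varphi(\gamma,\dot\gamma)\,{\rm d}t$, hence $L_{d_\varphi}(\gamma)\le\int_0^1\varphi(\gamma,\dot\gamma)\,{\rm d}t$; taking the infimum over $\gamma$ yields $\hat d_\varphi(x,y)\le d_\varphi(x,y)$. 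Thus $\hat d_\varphi=d_\varphi$, i.e.\ $d_\varphi$ is geodesic, and together with the previous paragraph this gives $d_\varphi\in\mathcal{D}_{cc}(\mathbb{G})$. If one additionally wants connecting curves of minimal length to exist, it is enough to observe that $(\mathbb{G},d_\varphi)$ is complete and locally compact, being bi-Lipschitz to the proper space $(\mathbb{G},d_{cc})$, and invoke the Hopf--Rinow theorem for length spaces (see \cite{Burago}).

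I do not expect a genuine obstacle here: the only mildly delicate point is the inequality $\hat d_\varphi\le d_\varphi$, that is, that passing from the energy $\int_0^1\varphi(\gamma,\dot\gamma)\,{\rm d}t$ to the intrinsic metric length $L_{d_\varphi}$ does not strictly increase distances; as explained, this reduces to the additivity of $\int\varphi$ over subarcs together with the elementary bound $d_\varphi\le\int\varphi$ on each subarc. All the remaining steps are routine, relying only on the bounds in Definition \ref{defF}, the identification of horizontal and Lipschitz curves in Proposition \ref{LipHor}, and Chow--Rashevskii's theorem.
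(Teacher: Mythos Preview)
Your proposal is correct and follows essentially the same approach as the paper: verify the distance axioms via curve reversal and concatenation, obtain the two-sided bound \eqref{ineq} by integrating property $(3)$ of Definition \ref{defF} along horizontal curves, and deduce the geodesic property from completeness and local compactness via the Hopf--Rinow theorem for length spaces \cite[Theorem 2.5.23]{Burago}. You are in fact more explicit than the paper in checking that $d_\varphi$ is a length distance (the paper simply asserts this), and your argument $L_{d_\varphi}(\gamma)\le\int_0^1\varphi(\gamma,\dot\gamma)\,{\rm d}t$ via subarc additivity is the natural way to fill that gap. One terminological quibble: the equality $\hat d_\varphi=d_\varphi$ only shows that $d_\varphi$ is a \emph{length} distance, not yet geodesic; the geodesic conclusion really does require the Hopf--Rinow step you add afterwards, so that final sentence is not optional.
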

\proof Since $(\G, d_\varphi)$ is a complete, locally compact length space, then $d_{\varphi}$
is a geodesic distance, thanks to the general result contained in \cite[Theorem 2.5.23]{Burago}. \\
To prove the claim, we have that $d_{\varphi}(x, y) \geq 0$ for every $x, y \in \mathbb{G}$ since the integral of $\varphi(\gamma(\cdot),\dot\gamma(\cdot))$ is non-negative.
In order to prove the symmetry, let us consider $\gamma \in \mathcal{H}([0,1],\mathbb{G})$ such that $\gamma(0) = x$ and $\gamma(1) = y$.
Set
$\xi : [0, 1] \rightarrow \mathbb{G}$ as $\xi(t) = \gamma(1 - t)$, hence this is a horizontal curve in $[0,1]$.
By the 1-homogeneity of $\varphi(x,\cdot)$, we get that
\begin{align*}
\int_0^1 \varphi(\xi(t), \dot{\xi}(t)) \,{\rm d}t
 &= \int_0^1 \varphi(\gamma(1 - t) , - \dot{\gamma}(1 - t)) \,{\rm d}t 
= \int_0^1 \varphi(\gamma(s), - \dot{\gamma}(s)) \,{\rm d}s \\
&=  \int_0^1 \varphi(\gamma(s), \dot{\gamma}(s)) \,{\rm d}s.
\end{align*}
So now, passing to the infimum over $\gamma \in  \mathcal{H}([0,1], \mathbb{G})$ we get that 
$d_{\varphi}(x, y) = d_{\varphi}(y, x)$.

To prove the triangle inequality, 
let $x, y, z \in \mathbb{G}$ and $\gamma_1,\gamma_2\in\mathcal{H}([0,1], \mathbb{G})$ be such that 
$\gamma_1(0) = x, \gamma_1(1)= y = \gamma_2(0)$, and $\gamma_2(1) = z$. 
Let us define the following curve: 
$$ 
\eta : [0,1] \rightarrow \mathbb{G},\quad \eta(t) \coloneqq
\begin{cases}
 \gamma_1(2 t)\quad\mbox{if }t \in [0, \frac{1}{2}]; \\
 \gamma_2(2 t - 1)\quad\mbox{if }t \in [\frac{1}{2}, 1].      \\
\end{cases} 
$$
Then we obtain that
\begin{align*}
d_{\varphi}(x, z) &\leq \int_0^1 \varphi(\eta(t), \dot{\eta}(t)) \,{\rm d}t = \int_0^{\frac{1}{2}} \varphi(\gamma_1(2 t), 2\dot{\gamma}_1(2 t)) \,{\rm d}t 
+ \int_{\frac{1}{2}}^1 \varphi(\gamma_2(2t - 1), 2\dot{\gamma}_2(2t - 1))\,{\rm d}t \\
&= \int_0^1 \varphi(\gamma_1(s), \dot{\gamma}_1(s)) \,{\rm d}s 
+ \int_0^1 \varphi(\gamma_2(s), \dot{\gamma}_2(s))\,{\rm d}s,
\end{align*}
where we applied a change-of-variable (in both integrals) and the $1$-homogeneity of $\varphi$.
Passing to the infimum respectively over all $\gamma_1,\gamma_2$, we conclude.
We are left to prove \eqref{ineq}. Let us take $x, y \in \mathbb{G}$ and consider the horizontal curve 
$ \gamma : [0,1] \rightarrow \mathbb{G}$ s.t.\ $\gamma(0) = x$ and $ \gamma(1) = y$.
By Proposition \ref{properties} we get that
\begin{align*}
\int_0^1 \varphi(\gamma(t), \dot{\gamma}(t))\,{\rm d}t
& \leq \alpha \int_0^1  \| \dot{\gamma}(t) \|_{\gamma(t)} \,{\rm d}t.
\end{align*} 
Thus, passing to the infimum in the right-hand side we obtain the conclusion and
the converse inequality can be achieved by arguing in a similar way.  
\endproof
\subsection{Main Results}
Before proving one of the main theorems, we recall some basic terminology. 
Given two Banach spaces \(\mathbb B_1\) and \(\mathbb B_2\),
and denoting with \({\rm L}(\mathbb B_1,\mathbb B_2)\) the space
of all linear and continuous operators \(T\colon\mathbb B_1\to\mathbb B_2\),
it holds that \({\rm L}(\mathbb B_1,\mathbb B_2)\) is a Banach space
if endowed with the usual pointwise operations and the operator norm,
namely
\[
\|T\|_{{\rm L}(\mathbb B_1,\mathbb B_2)}\coloneqq
\underset{v\in\mathbb B_1\setminus\{0\}}\sup\frac{\|T(v)\|_{\mathbb B_2}}
{\|v\|_{\mathbb B_1}}\quad\text{ for every }T\in{\rm L}(\mathbb B_1,\mathbb B_2).\]
\begin{rem}{\rm
Given a smooth map \(\varphi\colon M\to N\) between two smooth manifolds
\(M\), \(N\) and a point \(x\in M\), we denote by
\({\rm d}_x\varphi\colon T_x M\to T_{\varphi(x)}N\) the differential of
\(\varphi\) at \(x\).
We recall that if \(\gamma\colon[0,1]\to M\) is an
absolutely continuous curve in \(M\), then \(\sigma\coloneqq\varphi\circ\gamma\)
is an absolutely continuous curve in \(N\) and it holds that
\begin{equation}\label{eq:chain_rule_velocity}
\dot\sigma(t)={\rm d}_{\gamma(t)}\varphi[\dot\gamma(t)]
\quad\text{ for a.e.\ }t\in[0,1].
\end{equation}
We also point out that
\begin{equation}\label{eq:formula_exp_tv}
\frac{\rm d}{{\rm d}t}\delta_t e^v={\rm d}_e\tau_{\delta_t e^v}[v]
\quad\text{ for every }v\in H_e\G\text{ and }t\in(0,1).
\end{equation}
Indeed, calling \(\gamma\) the unique curve satisfying \eqref{eq:ODE_def_exp}
and defining \(\gamma^t(s)\coloneqq\gamma(ts)\) for all \(t\in(0,1)\) and
\(s\in[0,1]\), we may compute
\[
\frac{\rm d}{{\rm d}s}\gamma^t(s)=t\dot\gamma(ts)=
t\,{\rm d}_e\tau_{\gamma(ts)}[v]={\rm d}_e\tau_{\gamma^t(s)}[tv]
\quad\text{ for every }s\in(0,1),
\]
which shows that \(\gamma^t\) fulfills the ODE defining \(tv\),
so that \eqref{eq:exp_on_horizontal} yields
\(\gamma(t)=\gamma^t(1)=e^{tv}=\delta_t e^v\) for every \(t\in(0,1)\)
and accordingly the identity claimed in \eqref{eq:formula_exp_tv} is proved.
\fr}
\end{rem}

In general, let us observe that, if $\psi$ is a sub-Finsler metric, then the metric derivative with  $d = \delta_\psi$, namely $\varphi_{\delta_{\psi}}$, could be very different from $\psi$ 
(see \cite[Example 1.5]{rif14}).
Our purpose is to show a different result for the intrinsic distances.
It tells us that, given a sub-Finsler convex metric $\psi$, the metric derivative with respect to $d_\psi$ is bounded above by $\psi$ almost everywhere.
Moreover, we show that the equality holds, for instance, when $\psi$ is lower semicontinuous.

\begin{thm}\label{inequality}\label{uguaglianza} 
Let $\psi \in \mathcal{M}_{cc}^\alpha(\mathbb{G})$ be a sub-Finsler convex metric.
Then the following properties are verified:
\begin{itemize}
    \item[\(\rm i)\)] It holds that
    \[
    \text{for a.e.\ }x\in\G,\quad\varphi_{d_\psi}(x,v)\leq\psi(x,v)
    \quad\text{ for every }v\in H_x\G.
    \]
    \item[\(\rm ii)\)] If \(\psi\) is upper semicontinuous, then
    \[
    \varphi_{d_\psi}(x,v)\leq\psi(x,v)\quad\text{ for every }(x,v)\in H\G.
    \]
    \item[\(\rm iii)\)] If \(\psi\) is lower semicontinuous, then
    \[
    \varphi_{d_\psi}(x,v)\geq\psi(x,v)\quad\text{ for every }(x,v)\in H\G.
    \]
  In particular, for a.e.\ \(x\in\G\) it holds that
    \(\varphi_{d_\psi}(x,v)=\psi(x,v)\) for every \(v\in H_x\G\).
\end{itemize}
\end{thm}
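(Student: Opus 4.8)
The plan is to bound $d_\psi(x,x\cdot\delta_t e^{\bar v})$ from above by the $\psi$-length of the dilation curve $s\mapsto x\cdot\delta_s e^{\bar v}$ (this yields the upper bounds in $\rm i)$ and $\rm ii)$) and from below by the $\psi$-length of near-optimal joining curves whose base point has been ``frozen'' at $x$ (this yields the lower bound in $\rm iii)$). Throughout I write $\bar v:={\rm d}_x\tau_{x^{-1}}[v]\in\mathfrak g_1$, so that $v={\rm d}_e\tau_x[\bar v]$; I use repeatedly that $\delta_s e^{\bar v}=e^{s\bar v}$ by \eqref{eq:exp_on_horizontal}, that $s\mapsto x\cdot\delta_s e^{\bar v}$ is horizontal with velocity ${\rm d}_e\tau_{x\cdot\delta_s e^{\bar v}}[\bar v]$ (whose ``bar'' is constantly $\bar v$) by \eqref{eq:formula_exp_tv}--\eqref{eq:chain_rule_velocity}, and that, since $\psi$ is sub-Finsler convex, $d_\psi\in\mathcal D_{cc}(\G)$, so $\varphi_{d_\psi}$ is a sub-Finsler convex metric and $\varphi_{d_\psi}(x,\cdot)$, like $\psi(x,\cdot)$, is a norm (hence continuous) on $H_x\G$.

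For $\rm i)$ and $\rm ii)$ I would start from the competitor $u\mapsto x\cdot\delta_{tu}e^{\bar v}$ in Definition \ref{intrinsic}, which after a change of variables gives $t^{-1}d_\psi(x,x\cdot\delta_t e^{\bar v})\le t^{-1}\int_0^t h_{\bar v}(x\cdot\delta_s e^{\bar v})\,{\rm d}s$, where $h_{\bar v}(y):=\psi(y,{\rm d}_e\tau_y[\bar v])$ is Borel and bounded (by $\alpha\|\bar v\|_e$, via property $(3)$ of Definition \ref{defF} and \eqref{left}) with $h_{\bar v}(x)=\psi(x,v)$. For $\rm i)$: applying Lemma \ref{FubiniLeb} to $h_{\bar v}$ along the fixed direction $\bar v$ makes the right-hand side converge to $\psi(x,v)$ as $t\searrow0$ for a.e.\ $x$; running the same for $-\bar v$ and using the symmetry of $d_\psi$ and $\psi(x,-v)=\psi(x,v)$ gives $\varphi_{d_\psi}(x,v)\le\psi(x,v)$ for a.e.\ $x$, for this $v$; letting $\bar v$ range over a countable dense subset of $\mathfrak g_1$ and using continuity of the two norms upgrades this to: for a.e.\ $x$, $\varphi_{d_\psi}(x,v)\le\psi(x,v)$ for every $v\in H_x\G$. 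For $\rm ii)$: the same competitor bound holds, and $(x\cdot\delta_s e^{\bar v},{\rm d}_e\tau_{x\cdot\delta_s e^{\bar v}}[\bar v])\to(x,v)$ in $H\G$ as $s\searrow0$ (the base points converge since $d_{cc}(x\cdot\delta_s e^{\bar v},x)=d_{cc}(e,e^{s\bar v})\to0$, and the bars are identically $\bar v$), so upper semicontinuity of $\psi$ forces $\limsup_{s\searrow0}h_{\bar v}(x\cdot\delta_s e^{\bar v})\le\psi(x,v)$, hence the Cesàro averages, and therefore $\varphi_{d_\psi}(x,v)$, are $\le\psi(x,v)$ now for \emph{every} $(x,v)\in H\G$.

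For $\rm iii)$ fix $(x,v)$ with $v\ne0$; it suffices to work with $t\searrow0$ since $\varphi_{d_\psi}(x,v)\ge\limsup_{t\searrow0}t^{-1}d_\psi(x,x\cdot\delta_t e^{\bar v})$. For small $t>0$ pick a horizontal (hence Lipschitz, by Proposition \ref{LipHor}) curve $\gamma_t\colon[0,1]\to\G$ joining $x$ to $x\cdot\delta_t e^{\bar v}$ with $\int_0^1\psi(\gamma_t,\dot\gamma_t)\,{\rm d}s\le d_\psi(x,x\cdot\delta_t e^{\bar v})+t^2$. From $\psi(y,w)\ge\frac1\alpha\|w\|_y$, $d_\psi\le\alpha d_{cc}$ and $d_{cc}(x,x\cdot\delta_t e^{\bar v})=t\,d_{cc}(e,e^{\bar v})$, the $d_{cc}$-length $L_\G(\gamma_t)$ is $O(t)$, so $\gamma_t$ stays in $\overline{B(x,Ct)}$ for some $C$ independent of $t$. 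Writing $h^t\in L^\infty((0,1);\mathbb R^m)$ for the canonical coordinates of $\gamma_t$ and $\dot\gamma_t'(s):=\sum_{i=1}^m h_i^t(s)X_i(x)={\rm d}_e\tau_x[\overline{\dot\gamma_t(s)}]\in H_x\G$ (so $\|\dot\gamma_t'(s)\|_x=\|\dot\gamma_t(s)\|_{\gamma_t(s)}$), the key step is a uniform form of lower semicontinuity: for every $\varepsilon>0$ there is $r>0$ with $\psi(y,w)\ge\psi\big(x,{\rm d}_e\tau_x[{\rm d}_y\tau_{y^{-1}}[w]]\big)-\varepsilon\|w\|_y$ for all $y\in\overline{B(x,r)}$ and $w\in H_y\G$; this follows from the lower semicontinuity of $\psi$, the compactness of the unit sphere of $(\mathfrak g_1,\|\cdot\|_e)$, and the continuity of the norm $\psi(x,\cdot)$, by a finite-covering argument. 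Granting it, for $t$ small enough that $Ct\le r$ and by Jensen's inequality for the norm $\psi(x,\cdot)$ on the finite-dimensional space $H_x\G$,
\[
\int_0^1\psi(\gamma_t,\dot\gamma_t)\,{\rm d}s\ge\int_0^1\psi\big(x,\dot\gamma_t'(s)\big)\,{\rm d}s-\varepsilon L_\G(\gamma_t)\ge\psi\Big(x,\int_0^1\dot\gamma_t'(s)\,{\rm d}s\Big)-O(\varepsilon t).
\]
A short computation in exponential coordinates — using that the first $m$ coordinates are additive under the group law (Baker--Campbell--Hausdorff), so that $\int_0^1 h^t(s)\,{\rm d}s$ equals the first-layer part of $x^{-1}\cdot(x\cdot\delta_t e^{\bar v})=e^{t\bar v}$, namely $t\bar v$ — gives $\int_0^1\dot\gamma_t'(s)\,{\rm d}s=t\,{\rm d}_e\tau_x[\bar v]=tv$; hence $d_\psi(x,x\cdot\delta_t e^{\bar v})+t^2\ge t\psi(x,v)-O(\varepsilon t)$, and dividing by $t$, taking $\limsup_{t\searrow0}$, then letting $\varepsilon\to0$, yields $\varphi_{d_\psi}(x,v)\ge\psi(x,v)$ for every $(x,v)$. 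Together with $\rm i)$ this gives the asserted a.e.\ equality.

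The main obstacle is the lower bound in $\rm iii)$: to bound the infimum over joining curves from below one must replace the varying norms $\psi(\gamma_t(s),\cdot)$ by the single norm $\psi(x,\cdot)$ \emph{before} applying Jensen, and this is exactly what the uniform lower semicontinuity estimate delivers — it is there that lower semicontinuity (not merely Borel measurability) and compactness of the sphere bundle over a small ball genuinely enter. Everything else is routine: the competitor bound plus Lebesgue differentiation (Lemma \ref{FubiniLeb}) for $\rm i)$, plain upper semicontinuity for $\rm ii)$, and the elementary identity $\int_0^1\dot\gamma_t'(s)\,{\rm d}s=tv$ in exponential coordinates.
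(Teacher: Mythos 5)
Your arguments for parts \(\rm i)\) and \(\rm ii)\) match the paper's essentially verbatim: the dilation curve as competitor for \(d_\psi(x,x\cdot\delta_t e^{\bar v})\) gives the integral upper bound, and then Lebesgue differentiation (via Lemma \ref{FubiniLeb}) handles \(\rm i)\) while pointwise upper semicontinuity handles \(\rm ii)\); you are also a bit more careful than the paper in explicitly treating \(t<0\) by running the argument for \(-\bar v\).

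For part \(\rm iii)\) your route is genuinely different from, and cleaner than, the paper's. Both proofs first establish the uniform lower-semicontinuity estimate on a small ball (your statement is exactly the paper's \eqref{eq:ineq_lsc_aux2}, and your compactness-of-the-sphere argument for it is correct), and both then apply Jensen's inequality to the fixed norm at the base point. The difference is how one evaluates the resulting frozen-base integral. The paper passes through the map \(\exp^{-1}\): it introduces an auxiliary norm \(\sf n\) on the whole Lie algebra \(\mathfrak g\) extending \(\psi(e,\cdot)\), proves the quantitative comparison \eqref{eq:ineq_lsc_aux3} between \({\rm d}_x\exp^{-1}\) and \({\rm d}_x\tau_{x^{-1}}\) near \(e\), and then uses the fundamental theorem of calculus on \(\sigma=\exp^{-1}\circ\gamma\) to compute \(\int_0^1 {\rm d}_{\gamma_s}\exp^{-1}[\dot\gamma_s]\,{\rm d}s=tv\). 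You instead observe that, since the group law is additive on the first layer (Baker--Campbell--Hausdorff), the canonical coordinates of any horizontal curve satisfy \(\int_0^1 h^t(s)\,{\rm d}s = t\bar v\), so that \(\int_0^1\dot\gamma_t'(s)\,{\rm d}s=tv\) lands directly in \(H_x\G\) and no auxiliary norm or \(\exp^{-1}\)-differential estimate is needed. This is correct (the left-invariant fields \(X_i\), \(i\le m\), are \(\partial_{x_i}\) plus terms in the strata above the first, so \(\dot\gamma_i(s)=h^t_i(s)\) for \(i\le m\), and the endpoints give exactly \(t\bar v\)), and it shortens the argument by removing the need for \eqref{eq:ineq_lsc_aux3}, the extension of \(\|\cdot\|_e\) to a Hilbert norm on \(T_e\G\), and the auxiliary norm \(\sf n\) on \(\mathfrak g\). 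The remaining bookkeeping (length bound \(L_\G(\gamma_t)=O(t)\) to keep the competitor inside \(\overline{B(x,Ct)}\), division by \(t\), letting \(\varepsilon\to0\)) is sound.
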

\proof
\ \\
\boxed{i)} Given \(x\in\G\), \(v\in H_e\G\), and \(t>0\), we define
the curve \(\gamma=\gamma_{x,v,t}\colon[0,1]\to\G\) as
\[
\gamma(s)\coloneqq x\cdot\delta_{ts}e^v\quad\text{ for every }s\in[0,1].
\]
Notice that \(\gamma\) is horizontal and joins \(x\) to \(x\cdot\delta_t e^v\).
We can compute
\[
\dot\gamma(s)=\frac{\rm d}{{\rm d}s}\tau_x\big(\delta_{ts}e^{\bar v}\big)
\overset{\eqref{eq:chain_rule_velocity}}=
{\rm d}_{\delta_{ts}e^v}\tau_x\Big[\frac{\rm d}{{\rm d}s}
\delta_{ts}e^v\Big]\overset{\eqref{eq:formula_exp_tv}}=
{\rm d}_e\tau_{x\cdot\delta_{ts}e^v}[tv]\quad\text{ for every }s\in(0,1).
\]
Therefore, we may estimate
\begin{equation}\label{eq:est_d_psi_aux}\begin{split}
d_\psi(x,x\cdot\delta_t e^v)&\leq
\int_0^1\psi(\gamma(s),\dot\gamma(s))\,{\rm d}s
=t\int_0^1\psi\big(x\cdot\delta_{ts}e^v,
{\rm d}_e\tau_{x\cdot\delta_{ts}e^v}[v]\big)\,{\rm d}s\\
&=\int_0^t\psi\big(x\cdot\delta_s e^v,
{\rm d}_e\tau_{x\cdot\delta_se^v}[v]\big)\,{\rm d}s.
\end{split}\end{equation}
The next argument closely follows along the lines of Lemma
\ref{FubiniLeb}. Fix a dense sequence \((v_i)_i\) in the unit sphere of
\(H_e\G\) (w.r.t.\ the norm \(\|\cdot\|_e\)). Define \(v_i(x)\coloneqq
{\rm d}_e\tau_x[v_i]\) for every \(i\in\mathbb N\) and \(x\in\G\), so
that \((v_i(x))_i\) is a dense sequence in the unit sphere of \(H_x\G\)
(w.r.t.\ the norm \(\|\cdot\|_x\)). By using Lebesgue's differentiation
theorem and Fubini's theorem, we see that the set \(\Gamma_i\) of all couples
\((y,r)\in\G\times\mathbb R\) such that
\begin{equation}\label{eq:def_Gamma_i}
\psi\big(y\cdot\delta_r e^{v_i},{\rm d}_e
\tau_{y\cdot\delta_r e^{v_i}}[v_i]\big)=\lim_{t\searrow 0}
\frac{1}{t}\int_0^t\psi\big(y\cdot\delta_{r+s}e^{v_i},
{\rm d}_e\tau_{y\cdot\delta_{r+s}e^{v_i}}[v_i]\big)\,{\rm d}s
\end{equation}
has zero \(\mathcal L^{n+1}\)-measure. By using Fubini's theorem again,
we can find \(r\in\mathbb R\) such that for any \(i\in\mathbb N\) there
exists a \(\mathcal L^n\)-null set \(N_i\subset\G\) such that
\eqref{eq:def_Gamma_i} holds for every point \(y\in\G\setminus N_i\). Let us consider
the set \(N\coloneqq\bigcup_{i\in\mathbb N}\sigma_{\delta_r e^{v_i}}(N_i)\),
where \(\sigma_z\colon\G\to\G\) stands for the right-translation map
\(\sigma_z w\coloneqq w\cdot z\). The right-invariance of \(\mathcal L^n\)
grants that \(N\) is \(\mathcal L^n\)-negligible. Given that
\begin{equation}\label{eq:Leb+Fub_aux}
\psi(x,v_i(x))=\lim_{t\searrow 0}\frac{1}{t}\int_0^t\psi
\big(x\cdot\delta_s e^{v_i},{\rm d}_e\tau_{x\cdot\delta_s e^{v_i}}[v_i]\big)
\,{\rm d}s\quad\text{ for every }i\in\mathbb N\text{ and }x\in\G\setminus N,
\end{equation}
we can  conclude that
\[\begin{split}
\varphi_{d_\psi}(x,v_i(x))&\overset{\phantom{\eqref{eq:Leb+Fub_aux}}}=
\lim_{t\searrow 0}\frac{d_\psi(x,x\cdot\delta_t e^{v_i})}{t}
\overset{\eqref{eq:est_d_psi_aux}}\leq
\lim_{t\searrow 0}\frac{1}{t}\int_0^t\psi\big(x\cdot\delta_s e^{v_i},
{\rm d}_e\tau_{x\cdot\delta_s e^{v_i}}[v_i]\big)\,{\rm d}s\\
&\overset{\eqref{eq:Leb+Fub_aux}}=\psi(x,v_i(x))\quad\text{ for every }
i\in\mathbb N\text{ and }x\in\G\setminus N.
\end{split}\]
Since \(\psi(x,\cdot)\) is continuous and positively \(1\)-homogeneous,
and \((v_i(x))_i\) is dense in the unit \(\|\cdot\|_x\)-sphere of \(H_x\G\),
we deduce that \(\varphi_{d_\psi}(x,w)\leq\psi(x,w)\) for every
\(x\in\G\setminus N\) and \(w\in H_x\G\).\\
\boxed{ii)} Suppose \(\psi\) is upper semicontinuous. Let
\((x,v)\in H\G\) be fixed. Given any \(\varepsilon>0\), we can thus find
\(t_\varepsilon>0\) such that, setting
\(\bar v\coloneqq{\rm d}_x\tau_{x^{-1}}[v]\) for brevity, it holds that
\begin{equation}\label{eq:usc_psi_aux}
\psi\big(x\cdot\delta_t e^{\bar v},{\rm d}_e\tau_{x\cdot\delta_t e^{\bar v}}
[\bar v]\big)\leq\psi(x,v)+\varepsilon\quad
\text{ for every }t\in(0,t_\varepsilon).
\end{equation}
In particular, we may estimate
\[
\varphi_{d_\psi}(x,v)=
\lim_{t\searrow 0}\frac{d_\psi(x,x\cdot\delta_t e^{\bar v})}{t}
\overset{\eqref{eq:est_d_psi_aux}}\leq\lim_{t\searrow 0}\frac{1}{t}\int_0^t
\psi\big(x\cdot\delta_s e^{\bar v},{\rm d}_e\tau_{x\cdot\delta_s e^{\bar v}}
[\bar v]\big)\,{\rm d}s
\overset{\eqref{eq:usc_psi_aux}}\leq\psi(x,v)+\varepsilon.
\]
Thanks to the arbitrariness of \(\varepsilon\), we can conclude that
\(\varphi_{d_\psi}(x,v)\leq\psi(x,v)\), as desired.\\
\boxed{iii)} Suppose \(\psi\) is lower semicontinuous. First of all, let us extend \(\|\cdot\|_e\) to a Hilbert norm (still denoted by
	\(\|\cdot\|_e\)) on the whole \(T_e\G=\g\), then by left-invariance
	we obtain a Hilbert norm \(\|\cdot\|_x\) on each tangent space \(T_x\G\).
	Throughout the rest of the proof, we
	assume that \(T_x\G\) is
	considered with respect to such norm \(\|\cdot\|_x\). Moreover, choose
	any norm \(\sf n : \mathfrak{g} \rightarrow [0, + \infty) \) on
	the Lie algebra which extends \(\psi(e,\cdot)\), so that
	\({\sf n}\leq\lambda\|\cdot\|_e\) for some \(\lambda>0\).\\
	Without loss of generality,
	up to replacing \(\psi\) with the translated metric \(\psi_x\), defined
	as \(\psi_x(y,v)\coloneqq\psi\big(x\cdot y,{\rm d}_y\tau_x[v]\big)\)
	for every \((y,v)\in H\G\), it is sufficient to prove the statement only
	for \(x=e\). Then let \(v\in H_e\G\) be fixed. For any \(t>0\) we have
	that the horizontal curve \([0,1]\ni s\mapsto\delta_{st}e^v\in\G\) is
	a competitor for \(d_\psi(e,\delta_t e^v)\), thus we may estimate
	\[\begin{split}
	d_\psi(e,\delta_t e^v)&\leq
	\int_0^1\psi(\delta_{st}e^v,t\,{\rm d}_e\tau_{\delta_{st}e^v}[v])\,{\rm d} s
	=t\int_0^1\psi(\delta_{st}e^v,{\rm d}_e\tau_{\delta_{st}e^v}[v])\,{\rm d}s\\
	&\leq\alpha t\int_0^1\|{\rm d}_e\tau_{\delta_{st}e^v}[v]\|_{\delta_{st}e^v}\,{\rm d}s=\alpha t\|v\|_e,
	\end{split}\]
	where the last equality comes from the left invariance of the norm.
	This means that, in order to compute \(d_\psi(e,\delta_t e^v)\), it is
	sufficient to consider those horizontal curves \(\gamma\colon\G\to\mathbb R\)
	joining \(e\) to \(\delta_t e^v\) and satisfying
	\(\int_0^1\|\dot\gamma_s\|_{\gamma_s}\,{\rm d}s\leq
	\alpha\int_0^1\psi(\gamma_s,\dot\gamma_s)\,{\rm d}s\leq
	\alpha^2 t\|v\|_e\). We can also assume without loss of generality
	that any such curve \(\gamma\) is parametrized by constant speed
	with respect to the metric \(\|\cdot\|_x\). All in all, we have shown that
	\begin{equation}\label{eq:ineq_lsc_aux1}
	d_\psi(e,\delta_t e^v)=\inf_{\gamma\in\mathcal C_t}
	\int_0^1\psi(\gamma_s,\dot\gamma_s)\,{\rm d}s\quad\text{ for every }t>0,
	\end{equation}
	where the family \(\mathcal C_t\) of curves is defined as
	\[
	\mathcal C_t\coloneqq\bigg\{\gamma\colon[0,1]\to\G\text{ horizontal}
	\;\bigg|\;\gamma_0=e,\,\gamma_1=\delta_t e^v,
	\|\dot\gamma_s\|_{\gamma_s}\equiv\int_0^1\|\dot\gamma_s\|_{\gamma_s}
	\,{\rm d}s\leq\alpha^2 t\|v\|_e\bigg\}.
	\]
	Now fix any \(\varepsilon>0\). Since the map
	\({\rm exp}^{-1}\colon\G\to\g\) is a diffeomorphism, we can consider
	its differential \({\rm d}_x{\rm exp}^{-1}\colon T_x\G\to
	T_{{\rm exp}^{-1}(x)}\g\cong\g\) at any point \(x\in\G\).
	Let us observe that \({\rm exp}^{-1}\) is smooth, and
	\({\rm d}_e{\rm exp}^{-1}={\rm d}_e\tau_{e^{-1}}={\rm id}_\g\).
	
	Since \(\psi\) is lower semicontinuous and by the previous argument, we can find \(r>0\) such that
	\begin{subequations}\begin{align}\label{eq:ineq_lsc_aux2}
		\psi(x,v)\geq\psi(e,{\rm d}_x\tau_{x^{-1}}[v])-\varepsilon&\quad
		\text{ for every }x\in B(e,r)\text{ and }v\in H_x\G,\,\|v\|_x\leq 1,\\
		\label{eq:ineq_lsc_aux3}
		\big\|{\rm d}_x{\rm exp}^{-1}-{\rm d}_x\tau_{x^{-1}}\big\|
		_{{\rm L}(T_x\G,\g)}\leq\varepsilon&\quad\text{ for every }x\in B(e,r),
		\end{align}\end{subequations}
	where \(B(e,r) \equiv B_{d_{cc}}(e,r).\)
	In particular, given any \(t>0\) with
	\(\alpha^2 t\|v\|_e<r\) and \(\gamma\in\mathcal C_t\), we have that
	\(d_{cc}(e,\gamma_s)\leq s\alpha^2 t\|v\|_e<r\) for every \(s\in[0,1]\)
	and \(\|\dot\gamma_s\|_{\gamma_s}\leq\alpha^2 t\|v\|_e\)
	for a.e.\ \(s\in[0,1]\), thus accordingly \eqref{eq:ineq_lsc_aux2}
	and \eqref{eq:ineq_lsc_aux3} yield
	\begin{subequations}\begin{align}\label{eq:ineq_lsc_aux4}
		\psi(\gamma_s,\dot\gamma_s)\geq\psi(e,{\rm d}_{\gamma_s
		}\tau_{\gamma_s^{-1}}[\dot\gamma_s])-\alpha^2 t\|v\|_e\varepsilon&
		\quad\text{ for a.e.\ }s\in[0,1],\\
		\label{eq:ineq_lsc_aux5}
		\big\|{\rm d}_{\gamma_s}{\rm exp}^{-1}-{\rm d}_{\gamma_s}
		\tau_{\gamma_s^{-1}}\big\|_{{\rm L}(T_{\gamma_s}\G,\g)}\leq
		\varepsilon&\quad\text{ for a.e.\ }s\in[0,1],
		\end{align}\end{subequations}
	respectively. Therefore, for any \(t>0\) with
	\(\alpha^2 t\|v\|_e<r\) and \(\gamma\in\mathcal C_t\), we may estimate
	\[\begin{split}
	&\bigg|\psi\bigg(e,\int_0^1{\rm d}_{\gamma_s}\tau_{\gamma_s^{-1}}
	[\dot\gamma_s]\,{\rm d}s\bigg)-{\sf n}\bigg(\int_0^1{\rm d}_{\gamma_s}
	{\rm exp}^{-1}[\dot\gamma_s]\,{\rm d}s\bigg)\bigg|\\
	\overset{\phantom{\eqref{eq:ineq_lsc_aux5}}}\leq\,&{\sf n}\bigg(\int_0^1{\rm d}_{\gamma_s}\tau_{\gamma_s^{-1}}
	[\dot\gamma_s]\,{\rm d}s-\int_0^1{\rm d}_{\gamma_s}
	{\rm exp}^{-1}[\dot\gamma_s]\,{\rm d}s\bigg)
	\leq\lambda\,\bigg\|\int_0^1{\rm d}_{\gamma_s}\tau_{\gamma_s^{-1}}
	[\dot\gamma_s]-{\rm d}_{\gamma_s}{\rm exp}^{-1}[\dot\gamma_s]\,{\rm d}s\bigg\|_e\\
	\overset{\phantom{\eqref{eq:ineq_lsc_aux5}}}\leq\,&\lambda\int_0^1\big\|({\rm d}_{\gamma_s}\tau_{\gamma_s^{-1}}
	-{\rm d}_{\gamma_s}{\rm exp}^{-1})[\dot\gamma_s]
	\big\|_e\,{\rm d}s\leq\lambda\int_0^1\big\|{\rm d}_{\gamma_s}
	{\rm exp}^{-1}-{\rm d}_{\gamma_s}\tau_{\gamma_s^{-1}}\big\|_{{\rm L}(T_{\gamma_s}\G,\g)}\|\dot\gamma_s\|_{\gamma_s}\,{\rm d}s\\
	\overset{\eqref{eq:ineq_lsc_aux5}}\leq\,&\lambda\varepsilon
	\int_0^1\|\dot\gamma_s\|_{\gamma_s}\,{\rm d}s
	\leq\lambda\varepsilon\alpha^2 t\|v\|_e,
	\end{split}\]
	whence it follows that
	\begin{equation}\label{eq:ineq_lsc_aux6}\begin{split}
	\int_0^1\psi(\gamma_s,\dot\gamma_s)\,{\rm d}s&
	\overset{\eqref{eq:ineq_lsc_aux4}}\geq
	\int_0^1\psi(e,{\rm d}_{\gamma_s}\tau_{\gamma_s^{-1}}[\dot\gamma_s])
	\,{\rm d}s-\alpha^2 t\|v\|_e\varepsilon\\
	&\overset{\phantom{\eqref{eq:ineq_lsc_aux4}}}\geq
	\psi\bigg(e,\int_0^1{\rm d}_{\gamma_s}\tau_{\gamma_s^{-1}}
	[\dot\gamma_s]\,{\rm d}s\bigg)-\alpha^2 t\|v\|_e\varepsilon\\
	&\overset{\phantom{\eqref{eq:ineq_lsc_aux4}}}\geq
	{\sf n}\bigg(\int_0^1{\rm d}_{\gamma_s}{\rm exp}^{-1}[\dot\gamma_s]
	\,{\rm d}s\bigg)-(\lambda+1)\alpha^2 t\|v\|_e\varepsilon.
	\end{split}\end{equation}
	where in the second inequality we applied Jensen's inequality
	to \(\psi(e,\cdot)\). Now consider the curve \(\sigma\) in the
	Hilbert space \((\g,{\sf n})\), which is given by
	\(\sigma_s\coloneqq{\rm exp}^{-1}(\gamma_s)\) for every \(s\in[0,1]\).
	It holds that \(\sigma\) is absolutely continuous and satisfies
	\(\dot\sigma_s={\rm d}_{\gamma_s}{\rm exp}^{-1}[\dot\gamma_s]\)
	for a.e.\ \(s\in[0,1]\), thus
	\begin{equation}\label{eq:ineq_lsc_aux7}\begin{split}
	tv&=tv-0_\g={\rm exp}^{-1}(\delta_t e^v)-{\rm exp}^{-1}(e)=
	{\rm exp}^{-1}(\gamma_1)-{\rm exp}^{-1}(\gamma_0)=\sigma_1-\sigma_0\\
	&=\int_0^1\dot\sigma_s\,{\rm d}s=
	\int_0^1{\rm d}_{\gamma_s}{\rm exp}^{-1}[\dot\gamma_s]\,{\rm d}s.
	\end{split}\end{equation}
	By combining \eqref{eq:ineq_lsc_aux6} and \eqref{eq:ineq_lsc_aux7},
	we obtain for any \(t>0\) with \(\alpha^2 t\|v\|_e<r\) and
	\(\gamma\in\mathcal C_t\) that
	\begin{equation}\label{eq:ineq_lsc_aux8}
	\int_0^1\psi(\gamma_s,\dot\gamma_s)\,{\rm d}s
	\geq{\sf n}(tv)-(\lambda+1)\alpha^2 t\|v\|_e\varepsilon
	=\big[\psi(e,v)-(\lambda+1)\alpha^2\|v\|_e\varepsilon\big]t.
	\end{equation}
	We are now in a position to conclude the proof of the statement:
	given \(t>0\) with \(\alpha^2 t\|v\|_e<r\), one has that
	\begin{equation}\label{eq:ineq_lsc_aux9}
	\frac{d_\psi(e,\delta_t e^v)}{t}\overset{\eqref{eq:ineq_lsc_aux1}}=
	\inf_{\gamma\in\mathcal C_t}\frac{1}{t}\int_0^1\psi
	(\gamma_s,\dot\gamma_s)\,{\rm d}s\overset{\eqref{eq:ineq_lsc_aux8}}\geq
	\psi(e,v)-(\lambda+1)\alpha^2\|v\|_e\varepsilon.
	\end{equation}
	By letting \(t\searrow 0\), we thus deduce that
	\begin{equation}\label{eq:ineq_lsc_aux10}
	\varphi_{d_\psi}(e,v)=
	\limsup_{t\searrow 0}\frac{d_\psi(e,\delta_t e^v)}{t}
	\overset{\eqref{eq:ineq_lsc_aux9}}\geq
	\psi(e,v)-(\lambda+1)\alpha^2\|v\|_e\varepsilon.
	\end{equation}
	Finally, by letting \(\varepsilon\searrow 0\) in
	\eqref{eq:ineq_lsc_aux10} we conclude that
	\(\varphi_{d_\psi}(e,v)\geq\psi(e,v)\), as desired.
\endproof
\begin{cor}
If \(\psi\) is a continuous sub-Finsler convex metric, then    \[
    \varphi_{d_\psi}(x,v)=\psi(x,v)\quad\text{ for every }(x,v)\in H\G. \]
\end{cor}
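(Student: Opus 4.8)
The plan is simply to observe that continuity of $\psi$ already puts us in the situation covered by Theorem \ref{inequality}, so that no new argument is required. Indeed, a continuous map $\psi\colon H\G\to[0,+\infty)$ is at the same time upper semicontinuous and lower semicontinuous, hence it satisfies the hypotheses of both item ii) and item iii) of Theorem \ref{inequality}.

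First I would invoke Theorem \ref{inequality}\,ii): since $\psi$ is upper semicontinuous, we obtain
\[
\varphi_{d_\psi}(x,v)\leq\psi(x,v)\quad\text{ for every }(x,v)\in H\G.
\]
Next I would invoke Theorem \ref{inequality}\,iii): since $\psi$ is lower semicontinuous, we obtain the reverse inequality
\[
\varphi_{d_\psi}(x,v)\geq\psi(x,v)\quad\text{ for every }(x,v)\in H\G.
\]
Combining the two displayed inequalities yields $\varphi_{d_\psi}(x,v)=\psi(x,v)$ for every $(x,v)\in H\G$, which is the assertion.

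There is essentially no obstacle here, as the substance of the argument has already been carried out in the proof of Theorem \ref{inequality}; the only point worth stressing is that the pointwise (everywhere, not merely almost everywhere) character of the conclusion is inherited directly from the pointwise character of items ii) and iii), which in turn rely on the upper/lower semicontinuity of $\psi$ rather than on any measure-theoretic genericity.
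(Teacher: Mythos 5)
Your proposal is correct and is exactly the argument the paper gives: the corollary is obtained immediately by combining items ii) and iii) of Theorem \ref{inequality}, since a continuous $\psi$ is both upper and lower semicontinuous. Nothing more needs to be said.
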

\begin{proof}
It is an immediate consequence of assertions ii) and iii) of Theorem \ref{uguaglianza}.
\end{proof}

The crucial observation below states that $\delta_{\varphi}$ coincides with the intrinsic distance $d_{\varphi^\star}$ when we assume that the sub-Finsler metric is lower semicontinuous.
This will allow us to show the same result when $\varphi$ is upper semicontinuous, thanks to an approximation argument.

\begin{thm}\label{maintheorem} 
Let \(\varphi\in\mathcal M_{cc}^{\alpha}(\mathbb G)\) be a sub-Finsler convex metric.
Then it holds that \(\delta_\varphi\leq d_{\varphi^\star}\).
Moreover, if \(\varphi\) is lower semicontinuous, then
 $$\delta_\varphi(x, y)=d_{\varphi^\star}(x, y) \quad\text{for every } x, y \in \mathbb{G}.$$
\end{thm}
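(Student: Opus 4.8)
The plan is to prove the two inequalities separately: the bound $\delta_\varphi\le d_{\varphi^\star}$ with no regularity assumption, and the reverse bound $d_{\varphi^\star}\le\delta_\varphi$ under the lower semicontinuity hypothesis.

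\emph{The inequality $\delta_\varphi\le d_{\varphi^\star}$.} Fix $x,y\in\G$ and a competitor $f$ for $\delta_\varphi$, and let $N_f\subset\G$ be the $\mathcal L^n$-negligible union of the set where $f$ is not Pansu differentiable and of $\{z:\varphi(z,\nabla_\G f(z))>1\}$. For a horizontal curve $\gamma\in\mathcal P(\G,N_f)$ joining $x$ to $y$, the function $t\mapsto f(\gamma(t))$ is Lipschitz and, for a.e.\ $t$, the point $\gamma(t)$ avoids $N_f$; arguing exactly as in the proof of Lemma~\ref{estimate} (via \eqref{diff2} and Lemma~\ref{le1}) one gets $\frac{\rm d}{{\rm d}t}f(\gamma(t))=\langle\nabla_\G f(\gamma(t)),\dot\gamma(t)\rangle_{\gamma(t)}$ for a.e.\ $t$. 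Since the very definition of the dual metric gives $|\langle v,w\rangle_x|\le\varphi(x,v)\,\varphi^\star(x,w)$, we obtain $\big|\frac{\rm d}{{\rm d}t}f(\gamma(t))\big|\le\varphi^\star(\gamma(t),\dot\gamma(t))$ for a.e.\ $t$, hence $|f(x)-f(y)|\le\int_0^1\varphi^\star(\gamma(t),\dot\gamma(t))\,{\rm d}t$. To let the right-hand side approach $d_{\varphi^\star}(x,y)$, I would then invoke the genericity of horizontal curves avoiding a fixed null set (in the spirit of \cite[Lemma~2.2]{DCP4}): using that $\varphi^\star\le\alpha\|\cdot\|$, one perturbs a near-optimal horizontal curve joining $x$ and $y$ so that it lies in $\mathcal P(\G,N_f)$ at arbitrarily small extra $\varphi^\star$-cost. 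Taking the supremum over all competitors $f$ then yields $\delta_\varphi(x,y)\le d_{\varphi^\star}(x,y)$.

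\emph{The reverse inequality when $\varphi$ is lower semicontinuous.} In this case $\varphi^\star$ is upper semicontinuous by Lemma~\ref{semic} and $\varphi^{\star\star}=\varphi$ by Proposition~\ref{bidual}. The strategy is to reduce to the smooth Finsler setting by approximation. Using the two results of \cite{LDLP} applied to the upper semicontinuous sub-Finsler metric $\varphi^\star$, I would produce Finsler metrics $(G_j)_j$ with $G_j\le\varphi^\star$ whose induced distances increase to the sub-Finsler distance, $d_{G_j}\nearrow d_{\varphi^\star}$. For each fixed $j$, the dual $G_j^\star$ is again a Finsler metric, and the classical Finsler duality $\delta_F=d_{F^\star}$ (the Finsler instance of the present statement, due to De Cecco--Palmieri \cite{DCP1} and further generalized by Guo \cite{GUO}) applied to $F=G_j^\star$ gives $\delta_{G_j^\star}=d_{G_j^{\star\star}}=d_{G_j}$, the last equality because $G_j$ is convex. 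On the other hand $G_j\le\varphi^\star$ forces $G_j^\star\ge\varphi^{\star\star}=\varphi$, so the constraint $\|G_j^\star(\cdot,\nabla_\G f(\cdot))\|_\infty\le1$ is more restrictive than $\|\varphi(\cdot,\nabla_\G f(\cdot))\|_\infty\le1$; hence every competitor for $\delta_{G_j^\star}$ is a competitor for $\delta_\varphi$, and therefore $d_{G_j}=\delta_{G_j^\star}\le\delta_\varphi$. Letting $j\to\infty$ we get $d_{\varphi^\star}=\sup_j d_{G_j}\le\delta_\varphi$, which combined with the first part yields the asserted equality.

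\emph{Expected main obstacle.} The delicate points are, in the first part, the density of null-set-avoiding horizontal curves with control on the $\varphi^\star$-length (this is where the geometry of horizontal curves genuinely enters, and the reason why the degenerate Example above is not in contradiction with the statement), and, in the second part, the careful transfer of the \cite{LDLP} approximation and of the classical Finsler duality to the present horizontal framework: one must check that the duals $G_j^\star$ are admissible Finsler metrics, that the induced distances behave monotonically and converge to $d_{\varphi^\star}$, and that the notion of competitor — which is built from the horizontal gradient $\nabla_\G$ rather than a full gradient — is matched correctly throughout the limiting procedure.
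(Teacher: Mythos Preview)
Your first inequality is handled exactly as in the paper, so that part is fine.

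For the reverse inequality you take a genuinely different route from the paper, and there is a real gap. The paper does \emph{not} use any Finsler approximation here: it simply takes $f\coloneqq d_{\varphi^\star}(x,\cdot)$ and checks directly that $f$ is a competitor. The key point is that $\varphi$ lsc implies $\varphi^\star$ usc (Lemma~\ref{semic}), so along a $C^1$ horizontal curve $\gamma$ with $\gamma(0)=z$, $\dot\gamma(0)=v$ one has $\limsup_{t\searrow 0}\fint_0^t\varphi^\star(\gamma,\dot\gamma)\le\varphi^\star(z,v)$, whence $\langle\nabla_\G f(z),v\rangle_z\le\varphi^\star(z,v)$ for every $v$ and thus $\varphi(z,\nabla_\G f(z))\le 1$. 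This gives $\delta_\varphi(x,y)\ge|f(x)-f(y)|=d_{\varphi^\star}(x,y)$ in one stroke.

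Your approximation scheme, by contrast, runs the semicontinuity the wrong way. The LDLP result you invoke (Theorem~3.11 there) produces Finsler metrics increasing to a \emph{lower} semicontinuous extended metric $\tilde\psi$ on $T\G$; this is precisely what the paper exploits in Corollary~\ref{corollary}, where $\varphi$ is \emph{upper} semicontinuous and hence $\varphi^\star$ is lsc. In the present theorem $\varphi$ is lsc, so $\varphi^\star$ is only usc, the extension $\tilde\varphi^\star$ fails to be lsc, and the LDLP approximation does not apply. Even if you produced some $G_j\le\varphi^\star$ on $H\G$, the chain $\delta_{G_j^\star}=d_{G_j^{\star\star}}=d_{G_j}$ is not correct at finite $j$: $d_{G_j}$ is an honest Finsler distance computed over \emph{all} Lipschitz curves in $T\G$, whereas $d_{(G_j|_{H\G})^{\star\star}}$ is computed over horizontal curves only, and these differ (their agreement is exactly the limiting statement $d_{F_n}\to d_{\varphi^\star}$ one is trying to use). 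The ``classical Finsler duality'' you cite also uses the full gradient, not the horizontal one, so the competitor classes do not match without further work. In short, your scheme is essentially the proof of Corollary~\ref{corollary} transplanted to the wrong semicontinuity regime; for the lsc case you should instead use the direct competitor $d_{\varphi^\star}(x,\cdot)$ as the paper does.
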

\begin{proof} Let \(x,y\in\mathbb G\) be fixed. To prove the first
part of the statement, pick any Lipschitz function $f$
with $\|\varphi(z,\nabla_\G f(z))\|_\infty\leq 1$
and any horizontal curve \(\gamma : [0,1] \rightarrow \mathbb{G}\) joining $x$ and $y$ such that
 $$\mathcal{H}^1\big(\gamma \cap \{ z \in \G \;: \; \varphi(z, \nabla_{\mathbb{G}} f(z)) > 1\}\big) = 0.$$
These are competitors for
\(\delta_\varphi(x,y)\) and \(d_{\varphi^\star}(x,y)\),
respectively. Then we can estimate
\begin{align*}
\big|f(x)-f(y)\big|&=\bigg|\int_0^1\frac{\rm d}{\dt}(f(\gamma(t)))\,{\rm d}t \bigg|
=\bigg|\int_0^1\langle\nabla_{\mathbb G}f(\gamma(t)),
\dot\gamma(t) \rangle_{\gamma(t)} \,{\rm d}t\bigg| \\
&\leq
\int_0^1\big|\langle\nabla_{\mathbb G}f(\gamma(t)),\dot\gamma(t)
\rangle_{\gamma(t)}\big|\,{\rm d}t
\leq\int_0^1\varphi(\gamma(t),\nabla_{\mathbb G}f(\gamma(t)))\,
\varphi^\star(\gamma(t),\dot\gamma(t)) \,{\rm d}t \\
&\leq\big\|\varphi(\cdot,\nabla_{\mathbb G}f(\cdot))\big\|_\infty
\int_0^1\varphi^\star(\gamma(t),\dot\gamma(t)) \,{\rm d}t
\leq\int_0^1\varphi^\star(\gamma(t),\dot\gamma(t)) \,{\rm d}t,
\end{align*}
whence it follows that \(\delta_\varphi(x,y)\leq d_{\varphi^\star}(x,y)\).

Now suppose \(\varphi\) is lower semicontinuous. Define the function
$f : \mathbb{G} \rightarrow \mathbb{R}$ as $ f( \cdot) \coloneqq d_{\varphi^\star}(x, \cdot)$ and since
\(d_{\varphi^\star}(x, y) \leq\alpha^{-1}d_{cc}(x, y)\) everywhere, we have that \(f\) is Lipschitz.
Fix any point \(z\in\mathbb G\) such that \(\nabla_{\mathbb G}f(z)\) exists
and let \(v\in H_z\mathbb G\).
Pick a horizontal curve \(\gamma\colon[0,\varepsilon]\to\mathbb G\) of class
\(C^1\) such that \(\gamma(0)=z\) and \(\dot\gamma(0)=v\). Thanks to the
continuity of \(t\mapsto(\gamma(t),\dot\gamma(t))\) and the upper
semicontinuity of \(\varphi^\star\), granted by Lemma 
\ref{semic}, we obtain that \(\limsup_{t\searrow 0}
\fint_0^t\varphi^\star(\gamma(s),\dot\gamma(s))\, \,{\rm d}s\leq
\varphi^\star(\gamma(0),\dot\gamma(0))=\varphi^\star(z,v)\),
whence, by the identities \eqref{proj} and \eqref{diff2}, it follows that
\[\begin{split}
\langle\nabla_{\mathbb G}f(z),v\rangle_z&=
\lim_{t\searrow 0}\frac{f(\gamma(t))-f(\gamma(0))}
{t}\leq\limsup_{t\searrow 0}\frac{f(\gamma(t))-f(\gamma(0))}
{d_{\varphi^\star}(\gamma(t),\gamma(0))}\limsup_{t\searrow 0}
\frac{d_{\varphi^\star}(\gamma(t),\gamma(0))}{t}\\
&\leq\limsup_{t\searrow 0}\frac{\big|d_{\varphi^\star}(x,\gamma(t))-
d_{\varphi^\star}(x,\gamma(0))\big|}{d_{\varphi^\star}(\gamma(t),\gamma(0))}
\limsup_{t\searrow 0}\fint_0^t\varphi^\star(\gamma(s),\dot\gamma(s))\,{\rm d}s
\leq\varphi^\star(z,v).
\end{split}\]
By arbitrariness of \(v\in H_z\mathbb G\), we deduce that
\(\varphi(z,\nabla_{\mathbb G}f(z))\leq 1\). Therefore, \(f\) is a competitor for \(\delta_\varphi(x,y)\). This implies
that \(\delta_\varphi(x,y)\geq \lvert f(x)-f(y) \rvert=d_{\varphi^\star}(x,y)\).
\end{proof}

In particular, the last part of the proof shows that the supremum appearing in the definition of \(\delta_\varphi(x,y)\) is actually a maximum.

The upper semicontinuity of the sub-Finsler metric $\varphi$ is crucial for our proof, because it allows us to approximate the dual metric $\varphi^{\star}$ through a family of continuous Finsler metrics. 

\begin{cor}\label{corollary}
Let \(\varphi\in\mathcal M_{cc}^{\alpha}(\mathbb G)\) be a sub-Finsler convex metric.
Suppose \(\varphi\) is upper semicontinuous. Then, for every $x, y \in \mathbb{G}$ it holds that
\(\delta_\varphi(x, y)=d_{\varphi^\star}(x, y)\).
\end{cor}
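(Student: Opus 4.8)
The plan is to deduce the statement from Theorem~\ref{maintheorem} by approximating the upper semicontinuous metric $\varphi$ from above by Finsler metrics, for which Theorem~\ref{maintheorem} already supplies the desired identity. Since that theorem yields the inequality $\delta_\varphi\leq d_{\varphi^\star}$ for every sub-Finsler convex metric, the only thing left to prove is the reverse estimate $d_{\varphi^\star}(x,y)\leq\delta_\varphi(x,y)$ for all $x,y\in\mathbb G$.

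First I would invoke the approximation results of \cite{LDLP}. Since $\varphi$ is upper semicontinuous, there exists a sequence $(F_j)_j$ of Finsler metrics on $\mathbb G$ whose restrictions to $H\mathbb G$ are continuous sub-Finsler convex metrics lying in $\mathcal M_{cc}^{\alpha'}(\mathbb G)$ for some common $\alpha'\geq 1$ and decreasing pointwise to $\varphi$ on $H\mathbb G$. Being continuous, each $F_j$ is in particular lower semicontinuous, so Theorem~\ref{maintheorem} applies to $F_j$ and gives $\delta_{F_j}(x,y)=d_{F_j^\star}(x,y)$ for all $x,y\in\mathbb G$ and all $j\in\mathbb N$. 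Moreover, since $\varphi\leq F_j$, any Lipschitz function $f$ with $\|F_j(\cdot,\nabla_{\mathbb G}f(\cdot))\|_\infty\leq 1$ also satisfies $\|\varphi(\cdot,\nabla_{\mathbb G}f(\cdot))\|_\infty\leq 1$, so every competitor for $\delta_{F_j}$ is a competitor for $\delta_\varphi$; taking the supremum over competitors we obtain $\delta_{F_j}\leq\delta_\varphi$ on $\mathbb G\times\mathbb G$.

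Next I would pass to the dual metrics. From $F_j\searrow\varphi$ it follows directly from the definition \eqref{dual} that $F_j^\star\nearrow\varphi^\star$ pointwise on $H\mathbb G$: for fixed $(x,v)$ and $w\neq 0$ the ratio $|\langle v,w\rangle_x|/F_j(x,w)$ increases to $|\langle v,w\rangle_x|/\varphi(x,w)$, and the suprema over $j$ and over $w$ may be interchanged. Hence the distances $d_{F_j^\star}$ are non-decreasing and bounded above by $d_{\varphi^\star}$, and the second approximation result of \cite{LDLP}, which lets one approximate the (lower semicontinuous) intrinsic distance $d_{\varphi^\star}$ from below by the induced distances $d_{F_j^\star}$, gives $d_{F_j^\star}\nearrow d_{\varphi^\star}$. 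Combining the three facts,
\[
d_{\varphi^\star}(x,y)=\sup_{j\in\mathbb N}d_{F_j^\star}(x,y)=\sup_{j\in\mathbb N}\delta_{F_j}(x,y)\leq\delta_\varphi(x,y)\leq d_{\varphi^\star}(x,y),
\]
so that $\delta_\varphi=d_{\varphi^\star}$, as claimed.

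The step I expect to be the main obstacle is the precise use of the two results of \cite{LDLP}: one must ensure that the Finsler approximants of the upper semicontinuous metric $\varphi$ can be chosen within a fixed class $\mathcal M_{cc}^{\alpha'}(\mathbb G)$ and monotonically decreasing to $\varphi$, and, more delicately, that the induced intrinsic distances $d_{F_j^\star}$ truly increase up to $d_{\varphi^\star}$ rather than to some strictly smaller geodesic distance. This monotone approximation from below of the intrinsic distance of a merely lower semicontinuous sub-Finsler metric is exactly the quantitative input provided by \cite{LDLP}; the remaining ingredients --- the inequality from Theorem~\ref{maintheorem}, the competitor inclusion yielding $\delta_{F_j}\leq\delta_\varphi$, and the monotonicity $F_j\searrow\varphi\Rightarrow F_j^\star\nearrow\varphi^\star$ --- are routine.
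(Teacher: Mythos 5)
Your high-level plan is the same as the paper's: approximate by Finsler metrics, apply the lower-semicontinuous case of Theorem~\ref{maintheorem}, and pass to the limit; and you correctly identify the two points that carry the weight. But the proof as written has a genuine gap at exactly those two points, and they require more than hand-waving.

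First, \cite[Theorem 3.11]{LDLP} produces Finsler metrics that \emph{increase} to a \emph{lower} semicontinuous function; it does not produce Finsler metrics decreasing to your upper semicontinuous $\varphi$. The paper gets the decreasing approximants indirectly: it passes to $\varphi^\star$, which is lower semicontinuous by Lemma~\ref{semic}, extends it to $\widetilde\varphi^\star$ on all of $T\G$ by declaring it $+\infty$ off the (closed) subbundle $H\G$ so that lower semicontinuity persists, invokes \cite[Theorem 3.11]{LDLP} to get $F_n\nearrow\widetilde\varphi^\star$ on $T\G$, and only then defines $\varphi_n\coloneqq(F_n|_{H\G})^\star\searrow\varphi$. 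If you want to quote \cite{LDLP} for a decreasing approximation of $\varphi$ itself, you need to route through this dualization; no result of \cite{LDLP} gives it directly.

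Second, you assert $d_{F_j^\star}\nearrow d_{\varphi^\star}$ ``by the second approximation result of \cite{LDLP}.'' That result (cf.\ \cite[Theorem 5.1]{LDLP}) proves convergence of the \emph{Finsler} length distances $d_{F_n}$ induced by the genuine Finsler metrics $F_n$ on $T\G$ to $d_{\widetilde\varphi^\star}=d_{\varphi^\star}$; it says nothing about the sub-Finsler distances $d_{\varphi_n^\star}$ (your $d_{F_j^\star}$) induced by the restrictions to $H\G$. The paper fills this gap with the sandwich $d_{F_n}\leq d_{\varphi_n^\star}\leq d_{\varphi^\star}$ (the first inequality because the Finsler distance infimizes over all Lipschitz curves, not only horizontal ones, while $F_n=\varphi_n^\star$ on $H\G$), and then squeezes using $d_{F_n}\to d_{\varphi^\star}$. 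Without this sandwich you have not justified $d_{F_j^\star}\to d_{\varphi^\star}$. The rest of your argument (the competitor inclusion giving $\delta_{F_j}\leq\delta_\varphi$, the biduality step, and the final chain of inequalities) is correct and matches the paper.
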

\begin{proof}
Lemma \ref{semic} ensures that \(\varphi^\star\) is lower semicontinuous. We set
\(\widetilde\varphi^\star\colon T\mathbb G\to[0,+\infty)\) as
\[
\widetilde\varphi^\star(x,v)\coloneqq\left\{\begin{array}{ll}
\varphi^\star(x,v),\\
+\infty,
\end{array}\quad\begin{array}{ll}
\text{ if }(x,v)\in H\mathbb G,\\
\text{ if }(x,v)\in T\mathbb G\setminus H\mathbb G.
\end{array}\right.
\]
Observe that \(H\G\) is closed in \(T\G\) and thus \(\widetilde\varphi^\star\) is lower semicontinuous. Thanks to
\cite[Theorem 3.11]{LDLP}, there exists a sequence \( F_n : T \mathbb{G} \rightarrow [0,+\infty)\)
of Finsler metrics on \(\mathbb G\) such that
\(F_n(x,v)\nearrow\tilde\varphi^\star(x,v)\) for every
\((x,v)\in T\mathbb G\). 
Setting 
$$\varphi_n\colon H\mathbb{G}\to[0+\infty)
\quad\text{as } \varphi_n\coloneqq(F_n|_{H\mathbb G})^\star,$$
we obtain that $\varphi_n\in\mathcal M_{cc}^\alpha(\mathbb G)$
and \(\varphi_n^\star(x,v)\nearrow\varphi^\star(x,v)\)
for every \((x,v)\in H\G\).
Therefore \(\varphi_n(x,v)\searrow\varphi(x,v)\) for every \((x,v)\in H\mathbb G\).
In particular, the inequality \(\varphi_n\geq\varphi\) holds
for all \(n\in\mathbb N\). This implies that any competitor \(f\) 
for \(\delta_{\varphi_n}\) is a competitor
for \(\delta_\varphi\), so that accordingly 
\begin{equation}\label{firstineq}
\delta_{\varphi_n}(x, y) \leq\delta_\varphi(x, y),\quad\text{ for every } n\in\mathbb{N}\text{ and } x, y \in \mathbb{G}.
\end{equation}
Moreover, since  the infimum in the definition of $d_{F_n}$ is computed with respect to all Lipschitz curves, while the infimum in the definition of $d_{\varphi_n^\star}$ is just over horizontal curves, for every $x, y \in \mathbb{G}$ we get that
\begin{equation}\label{secondineq}
d_{F_n}(x, y)\leq d_{\varphi_n^\star}(x, y)\leq d_{\varphi^\star}(x, y)
\quad\text{ for every }n\in\mathbb{N}.
\end{equation}
From the convergence of \(F_n \) to \(\widetilde\varphi^\star\) we deduce 
that \(d_{F_n}(x,y)\rightarrow d_{\varphi^\star}(x,y)\) for every \(x,y\in\mathbb{G}\)
(cf. the proof of \cite[Theorem 5.1]{LDLP}), and thus
\begin{equation}\label{thirdineq}
d_{\varphi^\star}(x,y)=\lim_{n\to\infty}d_{\varphi_n^\star}(x,y)
\quad\text{ for every }x,y\in\mathbb G.
\end{equation}
Finally, since \(\varphi_n\) is lower semicontinuous (actually, continuous)
by Lemma \ref{semic}, we know from the second part of Theorem \ref{maintheorem} that
\begin{equation}\label{fourineq}
\delta_{\varphi_n}(x, y)=d_{\varphi_n^\star}(x, y)\quad\text{for every }n\in\mathbb{N}.
\end{equation}
All in all, we obtain that
\[
d_{\varphi^\star}(x,y)\overset{\eqref{thirdineq}}=
\lim_{n\to\infty}d_{\varphi_n^\star}(x,y)\overset{\eqref{fourineq}}=
\lim_{n\to\infty}\delta_{\varphi_n}(x,y)\overset{\eqref{firstineq}}\leq
\delta_\varphi(x,y)\quad\text{ for every }x,y\in\mathbb G.
\]
Since the converse inequality \(d_{\varphi^\star}\geq\delta_\varphi\)
is granted by the first part of Theorem \ref{maintheorem}, we
conclude that \(\delta_\varphi=d_{\varphi^\star}\), as required.
\end{proof}

\begin{thm}\label{phi=Lip} Let $\varphi \in \mathcal{M}_{cc}^\alpha(\mathbb{G})$ be an upper semicontinuous sub-Finsler convex metric. Then for any locally Lipschitz function $f : \mathbb{G} \rightarrow \mathbb{R}$ we have that
$$ \varphi(x, \nabla_{\mathbb{G}} f(x))= \Lip_{\delta_{\varphi}} f(x) \quad\text{for a.e.\ }  x \in \mathbb{G} .$$
\end{thm}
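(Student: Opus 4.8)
The plan is to reduce the statement to the intrinsic description of $\delta_\varphi$ and to the known behaviour of the metric derivative of the dual metric, following the architecture of Guo's argument in \cite{GUO}. Since $f$ is locally Lipschitz and the conclusion is pointwise and $\mathcal L^n$-almost everywhere, I would first localize to a bounded open set, on which $f$ is Lipschitz and hence $\nabla_\G f$ exists a.e.\ by Theorem \ref{Pansu}. By Corollary \ref{corollary} one has $\delta_\varphi=d_{\varphi^\star}$, and by Lemma \ref{semic} the dual metric $\varphi^\star$ is lower semicontinuous; moreover $\delta_\varphi=d_{\varphi^\star}$ belongs to $\mathcal D_{cc}(\G)$, so Theorem \ref{uguaglianza}(iii) applied to $\psi=\varphi^\star$ yields $\varphi_{\delta_\varphi}(x,v)=\varphi^\star(x,v)$ for a.e.\ $x\in\G$ and every $v\in H_x\G$. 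Recall also that $\varphi=\varphi^{\star\star}$ by Proposition \ref{bidual}. I would then fix a point $x$ at which $f$ is Pansu differentiable, $\varphi_{\delta_\varphi}(x,\cdot)=\varphi^\star(x,\cdot)$, and \eqref{diff2} holds — the set of such points having full measure — and prove that at each of them $\Lip_{\delta_\varphi}f(x)=\varphi^{\star\star}(x,\nabla_\G f(x))$, which is exactly $\varphi(x,\nabla_\G f(x))$.

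For the lower bound $\Lip_{\delta_\varphi}f(x)\geq\varphi(x,\nabla_\G f(x))$, I would fix $v\in H_x\G\setminus\{0\}$, set $\bar v\coloneqq{\rm d}_x\tau_{x^{-1}}[v]$, and test the difference quotient of $f$ along the horizontal curve $y_t\coloneqq x\cdot\delta_t e^{\bar v}$, which converges to $x$ as $t\searrow 0$. By \eqref{diff2} the quotient $\big(f(y_t)-f(x)\big)/t$ converges to $\langle\nabla_\G f(x),v\rangle_x$, whereas $\limsup_{t\searrow 0}\delta_\varphi(x,y_t)/t\leq\varphi_{\delta_\varphi}(x,v)=\varphi^\star(x,v)$ directly from the definition of the metric derivative and the choice of $x$; since $\delta_\varphi(x,y_t)/t$ is also bounded below by a positive constant, taking the limsup of the product $\tfrac{f(y_t)-f(x)}{t}\cdot\tfrac{t}{\delta_\varphi(x,y_t)}$ gives $\Lip_{\delta_\varphi}f(x)\geq|\langle\nabla_\G f(x),v\rangle_x|/\varphi^\star(x,v)$. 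Taking the supremum over $v$ in a countable dense subset of $H_x\G$ (legitimate since $\varphi^\star(x,\cdot)$ is a norm) produces $\Lip_{\delta_\varphi}f(x)\geq\varphi^{\star\star}(x,\nabla_\G f(x))=\varphi(x,\nabla_\G f(x))$.

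The reverse inequality is the main obstacle. Fix $\varepsilon>0$. Arguing exactly as in the proof of Theorem \ref{uguaglianza}(iii) — combining the lower semicontinuity of $\varphi^\star$, the compactness of the unit spheres of the horizontal fibres, and the smoothness of the translation maps — I would obtain a neighbourhood $U$ of $x$ such that $\varphi^\star(z,w)\geq\varphi^\star\big(x,{\rm d}_e\tau_x[{\rm d}_z\tau_{z^{-1}}[w]]\big)-\varepsilon\|w\|_z$ for all $z\in U$ and $w\in H_z\G$. For $y$ close to $x$, choose a horizontal curve $\gamma\colon[0,1]\to\G$ joining $x$ to $y$ with $\int_0^1\varphi^\star(\gamma,\dot\gamma)\,{\rm d}t\leq d_{\varphi^\star}(x,y)+d_{cc}(x,y)^2$; since $L_\G(\gamma)\leq\alpha\int_0^1\varphi^\star(\gamma,\dot\gamma)\,{\rm d}t$ and $d_{cc}\leq\alpha\,d_{\varphi^\star}$, every point of $\gamma$ lies in $U$ once $y$ is sufficiently close to $x$. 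Denoting by $h$ the vector of canonical coordinates of $\gamma$ and recalling that, in exponential coordinates, the first-layer coordinates of $x^{-1}\cdot y$ are $\big(\int_0^1 h_1\,{\rm d}t,\ldots,\int_0^1 h_m\,{\rm d}t\big)$ — equivalently $\pi_x(x^{-1}\cdot y)={\rm d}_e\tau_x\big[\int_0^1{\rm d}_{\gamma(t)}\tau_{\gamma(t)^{-1}}[\dot\gamma(t)]\,{\rm d}t\big]$ — the \emph{freezing} estimate together with Jensen's inequality for the norm $\varphi^\star(x,\cdot)$ gives
\[
\int_0^1\varphi^\star(\gamma(t),\dot\gamma(t))\,{\rm d}t\ \geq\ \varphi^\star\big(x,\pi_x(x^{-1}\cdot y)\big)-\varepsilon\,L_\G(\gamma).
\]
Combining this with the bounds above on $\int_0^1\varphi^\star(\gamma,\dot\gamma)$ and on $L_\G(\gamma)$ yields $\varphi^\star\big(x,\pi_x(x^{-1}\cdot y)\big)\leq(1+\alpha\varepsilon)\,\delta_\varphi(x,y)+o(\delta_\varphi(x,y))$ as $y\to x$. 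On the other hand, the Pansu differentiability of $f$ at $x$, Remark \ref{scalprod}, and the inequality $|\langle a,b\rangle_x|\leq\varphi(x,a)\,\varphi^\star(x,b)$ (immediate from the definition of $\varphi^\star$) give $|f(y)-f(x)|\leq\varphi(x,\nabla_\G f(x))\,\varphi^\star\big(x,\pi_x(x^{-1}\cdot y)\big)+o(d_{cc}(x,y))$.

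Putting the last two estimates together, dividing by $\delta_\varphi(x,y)\geq\alpha^{-1}d_{cc}(x,y)$, and letting $y\to x$ and then $\varepsilon\to 0$, one concludes $\Lip_{\delta_\varphi}f(x)\leq\varphi(x,\nabla_\G f(x))$, which together with the second paragraph establishes the asserted equality at a.e.\ $x$. The genuinely delicate points, which I expect to require the most care, are: the uniform-in-direction lower semicontinuity estimate for $\varphi^\star$ over an entire neighbourhood of $x$; the verification that the near-minimizing curves $\gamma$ stay inside that neighbourhood as $y\to x$; and the check that all the remainder terms are truly $o(\delta_\varphi(x,y))$ rather than merely $O(\delta_\varphi(x,y))$.
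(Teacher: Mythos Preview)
Your argument is correct, and for the inequality $\Lip_{\delta_\varphi}f(x)\geq\varphi(x,\nabla_\G f(x))$ it coincides with the paper's: both test along the dilation curves $y_t=x\cdot\delta_t e^{\bar v}$, bound the metric derivative of $d_{\varphi^\star}$ by $\varphi^\star$ (the paper quotes item i) of Theorem~\ref{uguaglianza} rather than iii), but either suffices), and recover $\varphi$ via $\varphi=\varphi^{\star\star}$.

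The genuine divergence is in the reverse inequality. The paper does \emph{not} freeze the metric $\varphi^\star$; it freezes the gradient. Concretely, it approximates $f$ by a $C^1$ function $g$ via convolution so that $z\mapsto\nabla_\G g(z)$ is continuous; then, since $\varphi$ is upper semicontinuous, the composition $z\mapsto\varphi(z,\nabla_\G g(z))$ is upper semicontinuous, and one can arrange $\varphi(\gamma(t),\nabla_\G g(\gamma(t)))\leq\varphi(x,\nabla_\G g(x))+\varepsilon$ along near-minimizing curves. This gives $|g(x)-g(y)|\leq(\varphi(x,\nabla_\G g(x))+\varepsilon)\,d_{\varphi^\star}(x,y)$ directly, after which one transfers back from $g$ to $f$. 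Your route instead recycles the mechanism of Theorem~\ref{uguaglianza}(iii): you freeze $\varphi^\star$ at $x$ using its lower semicontinuity and compactness of the horizontal unit sphere, apply Jensen to the norm $\varphi^\star(x,\cdot)$, and invoke the Carnot-group identity $\pi_x(x^{-1}\cdot y)=\sum_j\big(\int_0^1 h_j\big)X_j(x)$. What you gain is that you bypass the mollification step entirely and work directly with the Pansu differential of $f$; the cost is the first-layer computation and the uniform-in-direction semicontinuity estimate, both of which you rightly flag as the delicate points. The paper's approach exploits the upper semicontinuity of $\varphi$ itself rather than the induced lower semicontinuity of $\varphi^\star$, and needs no group-specific algebra beyond the chain rule.
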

\begin{proof}
\ \\
\boxed{\leq} Since both sides are positively $1$-homogeneous with respect to $f$, we only need to show
that, if $\Lip_{\delta_{\varphi}} f(x) = 1$, then 
$\varphi(x, \nabla_{\mathbb{G}} f(x)) \leq 1$ for a.e.\ $x \in \mathbb{G}$. By Corollary \ref{corollary}, 
$ \Lip_{\delta_{\varphi}} f(x) = \Lip_{d_{\varphi^\star}} f(x)$, hence if we fix $(x, v) \in H \mathbb{G}$, thanks to
\eqref{proj} and the expression \eqref{diff2} we can write:
\begin{align*}
\langle \nabla_{\mathbb{G}} f(x), v \rangle_x &
= \lim_{t \rightarrow 0} \frac{f(x \cdot \delta_t e^{\bar{v}}) - f(x)}{t}
\leq \limsup_{t \rightarrow 0} \frac{d_{\varphi^\star}(x, x \cdot \delta_t e^{\bar{v}})}{t} \cdot \limsup_{t \rightarrow 0}
\frac{\lvert f(x \cdot \delta_t e^{\bar{v}}) - f(x) \rvert}{d_{\varphi^\star}(x, x \cdot \delta_t e^{\bar{v}})} \\
&\leq \varphi_{d_{\varphi^\star}}(x, v) \Lip_{d_{\varphi^\star}}f(x) \leq \varphi^{\star}(x, v),
\end{align*}
where in the last inequality we used item i) of Theorem \ref{uguaglianza}.
By arbitrariness of $v \in H_x \mathbb{G}$ and the fact that 
$$  \varphi(x, \nabla_{\mathbb{G}} f(x)) = \varphi^{\star \star}(x, \nabla_{\mathbb{G}} f(x)) \leq 1, $$
we get the conclusion.\\
\boxed{\geq} Thanks to a convolution argument, we can find a sequence $ (f_n)_{n} \subset C^1(\mathbb{G})$ such that 
$f_n \rightarrow f$ uniformly on compact sets and $\nabla_{\mathbb{G}} f_n \rightarrow \nabla_{\mathbb{G}}f$ in the almost
everywhere sense. Recall that any $C^1$-function is locally Lipschitz. Fix any $x \in \mathbb{G}$ such that 
$\nabla_{\mathbb{G}} f_n(x)$ exists for all \(n\in\mathbb N\) and $\nabla_{\mathbb{G}} f_n(x) \rightarrow \nabla_{\mathbb{G}}f(x)$
as $n \rightarrow \infty$. Now let $\varepsilon > 0$ be fixed. Then we can choose $r' > 0$ and $\bar{n} \in \mathbb{N}$
so that 
$$ \sup_{B(x, 2 r')} \lvert f_{\bar n} - f \rvert \leq \varepsilon \quad  \text{and} \quad \varphi(x, \nabla_{\mathbb{G}} f_{\bar n}(x) - \nabla_{\mathbb{G}}f(x)) \leq \varepsilon,$$ 
where the ball is with respect to the distance $d^\star_{\varphi}$.
Calling $g \coloneqq f_{ \bar{n}}$ and being $z \mapsto \nabla_{\mathbb{G}} g(z)$ continuous, we deduce that 
$z \mapsto \varphi(z, \nabla_{\mathbb{G}}g(z))$ is upper semicontinuous, thus there exists $r < r'$ such that
$$
\varphi(y, \nabla_{\mathbb{G}}g(y)) \leq \varphi(x, \nabla_{\mathbb{G}}g(x)) + \varepsilon
 \quad\text{for every }y \in B(x, 2 r).
$$
Fix any point $y \in B(x, r)$ and consider a horizontal curve $\gamma : [0,1] \rightarrow \mathbb{G}$ such that
$\gamma(0) = x$, $\gamma(1) = y$ \ with \ $\gamma([0,1]) \subset B(x, 2 r)$. We can estimate in this way:
\begin{align*}
\lvert f(x) - f(y)\rvert &\leq \lvert g(x) - g(y)\rvert + 2 \varepsilon \leq \int_0^1 \frac{\rm d}{\dt} g(\gamma(t)) \,{\rm d}t + 2 \varepsilon \\
& \leq \int_0^1 \varphi(\gamma(t), \nabla_\mathbb{G} g(\gamma(t))) \varphi^{\star}(\gamma(t), \dot{\gamma}(t)) \,{\rm d}t + 2 \varepsilon \\
& \leq \Bigl( \varphi(x, \nabla_{\mathbb{G}} g(x)) + \varepsilon \Bigr) \int_0^1 \varphi^{\star}(\gamma(t), \dot{\gamma}(t))\,{\rm d}t 
+ 2 \varepsilon \\
& \leq \Bigl( \varphi(x, \nabla_{\mathbb{G}} f(x)) + 2 \varepsilon \Bigr) \int_0^1 \varphi^{\star}(\gamma(t), \dot{\gamma}(t))\,{\rm d}t
+ 2 \varepsilon.
\end{align*}
By taking the infimum over all $\gamma \in \mathcal{H}([0,1], B(x, 2 r))$, we obtain that
$$ \lvert f(x) - f(y) \rvert \leq \Bigl(\varphi(x, \nabla_{\mathbb{G}}f(x)) + 2 \varepsilon \Bigr) d_{\varphi^\star}(x, y) +
2 \varepsilon,$$
whence by letting $\varepsilon \rightarrow 0$ we obtain that
$$ \frac{\lvert f(x) - f(y) \rvert}{d_{\varphi^\star}(x, y)} \leq \varphi(x, \nabla_{\mathbb{G}}f(x)). $$
Finally, by letting $y \rightarrow x$ we conclude that
$$ \Lip_{\delta_{\varphi}} f(x)= \Lip_{d_{\varphi^\star}} f(x) \leq \varphi(x, \nabla_{\mathbb{G}}f(x)),$$
as required.
\end{proof}
To conclude, in Proposition \ref{smooth} we prove that in the definition \eqref{lipdist}
of the distance \(\delta_\varphi\) it is sufficient to consider smooth functions. 
Before passing to the proof of this claim,
we prove the following technical result.
By \({\rm LIP}_{d^\star_\varphi}(f)\in[0,+\infty)\)
we mean the (global) Lipschitz constant of
\(f\in{\rm LIP}_{d^\star_\varphi}(\G)\). 
\begin{lem}\label{lem:sup_lip}
Let \(\varphi\in\mathcal M_{cc}^{\alpha}(\mathbb G)\) be a sub-Finsler convex metric.
Then it holds that
\begin{equation}\label{eq:sup_lip}
{\rm LIP}_{d_{\varphi^\star}}(f)=\underset{x\in\mathbb G}{\rm ess\,sup}\,{\rm Lip}
_{d_{\varphi^\star}}f(x)\quad\text{ for every }f\in{\rm LIP}_{d_{\varphi^\star}}(\mathbb G).
\end{equation}
\end{lem}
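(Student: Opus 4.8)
The inequality \({\rm LIP}_{d_{\varphi^\star}}(f)\ge\operatorname{ess\,sup}_{x}{\rm Lip}_{d_{\varphi^\star}}f(x)\) is immediate: for every \(x\) and every \(y\ne x\) one has \(|f(y)-f(x)|/d_{\varphi^\star}(x,y)\le{\rm LIP}_{d_{\varphi^\star}}(f)\), so letting \(y\to x\) gives \({\rm Lip}_{d_{\varphi^\star}}f(x)\le{\rm LIP}_{d_{\varphi^\star}}(f)\) for \emph{every} \(x\in\G\). For the converse I set \(d\coloneqq d_{\varphi^\star}\) — a geodesic distance in \(\mathcal D_{cc}(\G)\), since \(\varphi^\star\) is a sub-Finsler convex metric by Proposition \ref{properties} — and write \(\varphi_d\) for the metric derivative of \(d\). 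Put \(L\coloneqq\operatorname{ess\,sup}_{x\in\G}{\rm Lip}_d f(x)\le{\rm LIP}_d(f)<\infty\). Since \(\tfrac1\alpha d_{cc}\le d\), the function \(f\) is also \(d_{cc}\)-Lipschitz, hence Pansu differentiable \(\mathcal L^n\)-a.e.\ by Theorem \ref{Pansu}. The goal is to show \(|f(x)-f(y)|\le L\,d(x,y)\) for all \(x,y\in\G\); I may assume \(L>0\), the case \(L=0\) being handled verbatim with \(L\) replaced by an arbitrary \(\varepsilon>0\), which forces \(f\) to be constant.

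The heart of the argument is the pointwise estimate
\[
{\rm Lip}_d f(x)\ \ge\ (\varphi_d)^\star\big(x,\nabla_\G f(x)\big)\qquad\text{for }\mathcal L^n\text{-a.e.\ }x\in\G .
\]
Recall that \(\varphi_d\) is a sub-Finsler convex metric and that \(\varphi_d(x,v)=\lim_{t\to0}d\big(x,x\cdot\delta_t e^{\bar v}\big)/|t|\) for a.e.\ \(x\) and every \(v\in H_x\G\), with \(\bar v={\rm d}_x\tau_{x^{-1}}[v]\). Fix such a point \(x\) (also a Pansu-differentiability point of \(f\) at which \eqref{diff2} holds) and \(v\in H_x\G\setminus\{0\}\); then \(y_t\coloneqq x\cdot\delta_t e^{\bar v}\to x\) as \(t\searrow0\) with \(d(x,y_t)>0\). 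Using \eqref{proj} and \eqref{diff2} for the numerator and the existence of the above limit (which is positive for \(v\ne0\)) for the denominator,
\[
{\rm Lip}_d f(x)\ \ge\ \limsup_{t\searrow0}\frac{|f(y_t)-f(x)|}{d(x,y_t)}
=\Big(\lim_{t\searrow0}\frac{|f(y_t)-f(x)|}{|t|}\Big)\Big(\lim_{t\searrow0}\frac{|t|}{d(x,y_t)}\Big)
=\frac{\big|\langle\nabla_\G f(x),v\rangle_x\big|}{\varphi_d(x,v)} .
\]
Taking the supremum over \(v\in H_x\G\setminus\{0\}\) gives the claimed estimate. Hence \(\big\|(\varphi_d)^\star(\cdot,\nabla_\G f)\big\|_\infty\le L\), so \(f/L\) is a competitor for \(\delta_{(\varphi_d)^\star}\) (note \((\varphi_d)^\star\) is a sub-Finsler convex metric by Proposition \ref{properties}), and therefore \(|f(x)-f(y)|\le L\,\delta_{(\varphi_d)^\star}(x,y)\) for all \(x,y\).

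It remains to bound \(\delta_{(\varphi_d)^\star}\) from above by \(d\). Applying the first part of Theorem \ref{maintheorem} to the sub-Finsler convex metric \((\varphi_d)^\star\) gives \(\delta_{(\varphi_d)^\star}\le d_{((\varphi_d)^\star)^\star}\), and \(((\varphi_d)^\star)^\star=(\varphi_d)^{\star\star}=\varphi_d\) by Proposition \ref{bidual}, since \(\varphi_d\) is sub-Finsler convex. Finally, Theorem \ref{lunghezza} applied to \(d=d_{\varphi^\star}\in\mathcal D_{cc}(\G)\), whose metric derivative is exactly \(\varphi_d\), yields \(d_{\varphi_d}=d\). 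Chaining these, \(|f(x)-f(y)|\le L\,\delta_{(\varphi_d)^\star}(x,y)\le L\,d_{\varphi_d}(x,y)=L\,d(x,y)\), i.e.\ \({\rm LIP}_d(f)\le L\); together with the first inequality this proves \eqref{eq:sup_lip}.

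The main obstacle that this route sidesteps is that \({\rm Lip}_d f\le L\) holds only \(\mathcal L^n\)-a.e., so one cannot naively integrate \({\rm Lip}_d f\) along an almost-geodesic joining \(x\) and \(y\): such a curve could lie entirely in the exceptional null set (this is precisely the pathology illustrated by the Example in Section 5). The key is to promote the a.e.\ bound to the \emph{global} statement that \(f/L\) is a \(\delta_{(\varphi_d)^\star}\)-competitor, and then to invoke the already established global inequality \(\delta_{(\varphi_d)^\star}\le d_{\varphi_d}=d\).
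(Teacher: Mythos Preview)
Your proof is correct and takes a genuinely different route from the paper's. The paper argues by contradiction and integrates directly along an almost-$d_{\varphi^\star}$-geodesic $\gamma$: it writes $|f(x)-f(y)|\le\int_0^1\varphi(\gamma,\nabla_\G f(\gamma))\,\varphi^\star(\gamma,\dot\gamma)\,{\rm d}t$, identifies $\varphi(\gamma,\nabla_\G f(\gamma))$ with ${\rm Lip}_{d_{\varphi^\star}}f(\gamma)$, and bounds this by the essential supremum. By contrast, you never touch the original $\varphi$: you work instead with the metric derivative $\varphi_d$ of $d=d_{\varphi^\star}$, establish the one-sided estimate $(\varphi_d)^\star(x,\nabla_\G f(x))\le{\rm Lip}_d f(x)$ a.e.\ by differentiating along the dilation curves $t\mapsto x\cdot\delta_t e^{\bar v}$, and then promote this a.e.\ bound to a \emph{global} one by recognising $f/L$ as a competitor for $\delta_{(\varphi_d)^\star}$ and invoking the chain $\delta_{(\varphi_d)^\star}\le d_{(\varphi_d)^{\star\star}}=d_{\varphi_d}=d$ (Theorem~\ref{maintheorem}, Proposition~\ref{bidual}, Theorem~\ref{lunghezza}). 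This buys you two things: you avoid any appeal to Theorem~\ref{phi=Lip} (and hence to upper semicontinuity of $\varphi$), and you sidestep the delicate question---which you correctly flag in your final paragraph---of whether the chosen almost-geodesic meets the exceptional null set in at most an $\mathcal L^1$-null set of times. The price is that your argument is less direct, leaning on several earlier structural results about $\varphi_d$ and the $\delta$-construction rather than a single curve computation.
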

\begin{proof}
The inequality \((\geq)\) is trivial. To prove the converse inequality,
we argue by contradiction: suppose there exist \(x,y\in\mathbb G\)
with \(x\neq y\), a negligible Borel set \(N\subseteq\mathbb G\)
and \(\delta>0\) such that
\[
\frac{\big|f(x)-f(y)\big|}{d_{\varphi^\star}(x,y)}\geq
\sup_{z\in\mathbb G\setminus N}{\rm Lip}_{d_{\varphi^\star}}f(z)+\delta.
\]
Given any \(\varepsilon>0\), we can find \(\gamma \in \mathcal{H}([0,1], \mathbb{G})\)\, such that \(\gamma(0)=x\), \(\gamma(1)=y\), and
\[
\int_0^1\varphi^*(\gamma(t),\dot\gamma(t))\,{\rm d}t\leq
d_{\varphi^\star}(x,y)+\varepsilon.
\]
Since $f \circ \gamma : \mathbb{R} \rightarrow \mathbb{R}$
is Lipschitz, so Pansu-differentiable almost everywhere, we deduce that
\[\begin{split}
\big|f(x)-f(y)\big|&\leq\int_0^1\big|(f\circ\gamma)'(t)\big|\,{\rm d}t
\leq\int_0^1\varphi(\gamma(t),\nabla_{\mathbb G}f(\gamma(t)))\,
\varphi^*(\gamma(t),\dot\gamma(t))\,{\rm d}t\\
&=\int_0^1{\rm Lip}_{d_{\varphi^\star}}f(\gamma(t))\,
\varphi^\star(\gamma(t),\dot\gamma(t))\,{\rm d}t\leq
\sup_{z\in\mathbb G\setminus N}{\rm Lip}_{d_{\varphi^\star}}f(z)
\int_0^1\varphi^\star(\gamma(t),\dot\gamma(t))\,{\rm d}t\\
&\leq\bigg[\frac{\big|f(x)-f(y)\big|}{d_{\varphi^\star}(x,y)}-\delta\bigg]
\big(d_{\varphi^\star}(x,y)+\varepsilon\big).
\end{split}\]
By letting \(\varepsilon\searrow 0\) in the above estimate, we get
\(0\leq -\delta\,d_{\varphi^\star}(x,y)\), which leads to a contradiction.
Therefore, also the inequality \((\leq)\) in \eqref{eq:sup_lip} is proved,
whence the statement follows.
\end{proof}
\begin{prop}\label{smooth}
Let \(\varphi\in\mathcal M_{cc}^{\alpha}(\mathbb G)\) be a sub-Finsler convex metric.
Suppose \(\varphi\) is upper semicontinuous. Then for any \(x,y\in\mathbb G\)
it holds that
\begin{equation}\label{eq:alt_form_delta}
\delta_\varphi(x,y)=\sup\Big\{\big|f(x)-f(y)\big|\;\Big|\;
f\in C^\infty(\mathbb G),\,\big\|\varphi(\cdot,\nabla_{\mathbb G}f(\cdot)
\big\|_\infty\leq 1\Big\}.
\end{equation}
\end{prop}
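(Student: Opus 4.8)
Write $\delta_\varphi^\infty(x,y)$ for the right-hand side of \eqref{eq:alt_form_delta}. Since $\varphi$ is upper semicontinuous, Corollary \ref{corollary} gives $\delta_\varphi=d_{\varphi^\star}$, and in particular the pointwise Lipschitz constants $\Lip_{\delta_\varphi}f$ and $\Lip_{d_{\varphi^\star}}f$ coincide for every $f$. The first step of the plan is a reformulation of the notion of competitor: for a locally Lipschitz $f\colon\mathbb G\to\mathbb R$, the condition $\|\varphi(\cdot,\nabla_{\mathbb G}f(\cdot))\|_\infty\le 1$ is \emph{equivalent} to $f$ being $1$-Lipschitz with respect to $d_{\varphi^\star}$. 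Indeed, if $f$ is a competitor, then Theorem \ref{phi=Lip} gives $\Lip_{d_{\varphi^\star}}f(z)=\Lip_{\delta_\varphi}f(z)=\varphi(z,\nabla_{\mathbb G}f(z))\le 1$ for a.e.\ $z$, whence Lemma \ref{lem:sup_lip} yields ${\rm LIP}_{d_{\varphi^\star}}(f)\le 1$; conversely, if ${\rm LIP}_{d_{\varphi^\star}}(f)\le 1$ then $\Lip_{d_{\varphi^\star}}f\le 1$ pointwise, so Theorem \ref{phi=Lip} forces $\varphi(z,\nabla_{\mathbb G}f(z))\le 1$ for a.e.\ $z$, and moreover $|f(z)-f(w)|\le d_{\varphi^\star}(z,w)\le\alpha\,d_{cc}(z,w)$ (using $\varphi^\star\le\alpha\|\cdot\|$ from Proposition \ref{properties}) shows such an $f$ is $d_{cc}$-Lipschitz, hence admissible in \eqref{lipdist}. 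Consequently both $\delta_\varphi(x,y)$ and $\delta_\varphi^\infty(x,y)$ equal the supremum of $|f(x)-f(y)|$ over $d_{\varphi^\star}$-$1$-Lipschitz functions, the first over all of them and the second over the smooth ones; in particular $\delta_\varphi^\infty(x,y)\le\delta_\varphi(x,y)$.

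For the reverse inequality I fix $x,y\in\mathbb G$ and take the explicit maximiser $f_0\coloneqq d_{\varphi^\star}(x,\cdot)\colon\mathbb G\to\mathbb R$, which is $d_{\varphi^\star}$-$1$-Lipschitz by the triangle inequality and satisfies $|f_0(x)-f_0(y)|=d_{\varphi^\star}(x,y)=\delta_\varphi(x,y)$. The plan is then to smooth $f_0$ without enlarging its Lipschitz constant: applying the approximation result of Appendix A (Theorem \ref{thm:smooth_approx}) to $f_0$ relative to the sub-Finsler distance $d_{\varphi^\star}$, one obtains a sequence $(f_n)_n\subset C^\infty(\mathbb G)$ with ${\rm LIP}_{d_{\varphi^\star}}(f_n)\le 1$ for every $n$ and $f_n\to f_0$ uniformly on compact sets. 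By the reformulation of the previous paragraph each $f_n$ is an admissible smooth function in \eqref{eq:alt_form_delta}, so $\delta_\varphi^\infty(x,y)\ge|f_n(x)-f_n(y)|$ for every $n$; letting $n\to\infty$ gives $\delta_\varphi^\infty(x,y)\ge|f_0(x)-f_0(y)|=\delta_\varphi(x,y)$, which together with the first paragraph proves \eqref{eq:alt_form_delta}.

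The routine part of this scheme is the equivalence ``competitor $\Leftrightarrow$ $d_{\varphi^\star}$-$1$-Lipschitz'' and the closing limit argument, which merely assemble Corollary \ref{corollary}, Theorem \ref{phi=Lip} and Lemma \ref{lem:sup_lip}. The main obstacle is the smoothing step: one must approximate an arbitrary $1$-Lipschitz function by $C^\infty$ functions whose Lipschitz constant with respect to the possibly irregular (merely lower semicontinuous, and in general not left-invariant) sub-Finsler distance $d_{\varphi^\star}$ is not increased. All the geometric content of Proposition \ref{smooth} is concentrated in this statement, which is precisely why it is isolated and established separately in Appendix A.
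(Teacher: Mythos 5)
Your proof is correct and rests on exactly the same machinery as the paper's: Corollary \ref{corollary} to identify $\delta_\varphi$ with $d_{\varphi^\star}$, Theorem \ref{phi=Lip} together with Lemma \ref{lem:sup_lip} to translate the constraint $\|\varphi(\cdot,\nabla_\G f(\cdot))\|_\infty\le 1$ into a global $1$-Lipschitz bound for $d_{\varphi^\star}$, and Theorem \ref{thm:smooth_approx} (fed by \cite[Theorem 3.11]{LDLP}, which presents $d_{\varphi^\star}$ as an increasing limit of Finsler distances) for the smoothing step. The one structural difference is that the paper fixes an \emph{arbitrary} Lipschitz competitor $f$, smooths it, passes to the limit to get $|f(x)-f(y)|\le\tilde\delta_\varphi(x,y)$, and then takes the supremum over $f$; you instead invoke the fact (recorded in the paper right after Theorem \ref{maintheorem}) that the supremum defining $\delta_\varphi(x,y)$ is actually attained at $f_0=d_{\varphi^\star}(x,\cdot)$, and smooth only this single extremal function. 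Both routes are valid; yours is marginally shorter because it bypasses the final supremum over competitors, at the cost of explicitly relying on the extremality of $f_0$, whereas the paper's argument does not need to know that the sup is a max. The ``competitor $\Leftrightarrow$ $d_{\varphi^\star}$-$1$-Lipschitz'' equivalence you state at the outset is a clean way of packaging the paper's two applications of Theorem \ref{phi=Lip}.
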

\begin{proof}
Denote by \(\tilde\delta_\varphi(x,y)\) the quantity in the
right-hand side of \eqref{eq:alt_form_delta}. Since any competitor for
\(\tilde\delta_\varphi(x,y)\) is a competitor for \(\delta_\varphi(x,y)\),
we have that \(\delta_\varphi(x,y)\geq\tilde\delta_\varphi(x,y)\).
To prove the converse inequality, fix any Lipschitz function
\(f\colon\mathbb G\to\mathbb R\) such that
\(\big\|\varphi(\cdot,\nabla_{\mathbb G}f(\cdot))\big\|_\infty\leq 1\).
Corollary \ref{corollary} and Theorem \ref{phi=Lip} grant that
\({\rm ess\,sup}\,{\rm Lip}_{d_{\varphi^\star}}f\leq 1\), thus
Lemma \ref{lem:sup_lip} yields \({\rm LIP}_{d_{\varphi^\star}}(f)\leq 1\).
Given that \(d_{\varphi^\star}\) is an increasing, pointwise limit of Finsler distances by \cite[Theorem 3.11]{LDLP}, we are in a position to apply Theorem
\ref{thm:smooth_approx}. Thus we obtain a sequence \((f_n)_n\subseteq
C^\infty(\mathbb G)\cap{\rm LIP}_{d_{\varphi^\star}}(\mathbb G)\) such that
\({\rm LIP}_{d_{\varphi^\star}}(f_n)\leq 1\) for all \(n\in\mathbb N\) and
\(f_n\to f\) uniformly on compact sets. Corollary \ref{corollary}
and Theorem \ref{phi=Lip} imply that
\(\big\|\varphi(\cdot,\nabla_{\mathbb G}f_n(\cdot))\big\|_\infty=
\sup{\rm Lip}_{d_{\varphi^\star}}f_n\leq 1\), thus \(f_n\) is a competitor
for \(\tilde\delta_\varphi(x,y)\). Then we conclude that
\(\big|f(x)-f(y)\big|=\lim_n\big|f_n(x)-f_n(y)\big|\leq
\tilde\delta_\varphi(x,y)\), whence it follows that
\(\delta_\varphi(x,y)\leq\tilde\delta_\varphi(x,y)\) by arbitrariness
of \(f\).
\end{proof}

\appendix

\section{Smooth approximation of Lipschitz functions on generalized sub-Finsler manifolds}
The aim of this appendix is to prove an approximation result for
real-valued Lipschitz functions defined on some very weak kind
of sub-Finsler manifold. More precisely, we consider a distance
\(d\) on a smooth manifold that can be obtained as the monotone
increasing limit of Finsler distances; this notion covers the
case of generalized (so, possibly rank--varying) sub-Finsler
manifolds, thanks to \cite[Theorem 3.11]{LDLP}. In this
framework, we prove (see Theorem \ref{thm:smooth_approx} below)
that any Lipschitz function can be approximated (uniformly
on compact sets) by smooth functions having the same Lipschitz
constant. This generalizes previous results that were known
on `classical' sub-Riemannian manifolds,
cf.\ \cite{HK00} and the references therein.
\bigskip

Let us fix some notation. Given a metric space \((X,d)\),
we denote by \({\rm LIP}_d(X)\) the family of real-valued
Lipschitz functions on \(X\). For any
\(f\in{\rm LIP}_d(X)\), we denote by \({\rm LIP}_d(f)\in[0,+\infty)\) and
\({\rm Lip}_d f\colon X\to[0,+\infty)\) the (global) Lipschitz constant and the pointwise Lipschitz constant of \(f\), respectively. Moreover, given a Finsler manifold \((M,F)\),
we denote by \(d_F\)  the length distance on \(M\) induced by
the Finsler metric \(F\).
\begin{thm}\label{thm:smooth_approx}
Let \(M\) be a smooth manifold. Let \(d\) be a distance on \(M\) having
the following property: there exists a sequence \((F_i)_i\) of Finsler
metrics on \(M\) such that
\[
d_{F_i}(x,y)\nearrow d(x,y)\quad\text{ for every }x,y\in M.
\]
Then for any \(f\in{\rm LIP}_d(M)\) there exists a sequence
\((f_n)_n\subseteq C^\infty(M)\cap{\rm LIP}_d(M)\) such that
\[
\sup_{n\in\mathbb N}{\rm LIP}_d(f_n)\leq{\rm LIP}_d(f),\qquad
f_n\to f\;\text{ uniformly on compact sets.}
\]
\end{thm}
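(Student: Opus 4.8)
The plan is to reduce the statement, via a simple renormalization, to producing smooth functions whose Lipschitz constant is inflated by an arbitrarily small factor, and then to obtain those by a partition-of-unity Friedrichs mollification whose error is controlled through the duality between \(d\) and the limit metric.

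\textbf{Reduction.} If \({\rm LIP}_d(f)=0\) then \(f\) is constant and there is nothing to prove, so assume \(L:={\rm LIP}_d(f)\in(0,+\infty)\), and fix a compact exhaustion \((K_n)_n\) of \(M\). It suffices to produce, for each \(n\), a function \(g_n\in C^\infty(M)\cap{\rm LIP}_d(M)\) with \({\rm LIP}_d(g_n)\le(1+\tfrac1n)L\) and \(\sup_{K_n}|g_n-f|\le\tfrac1n\): then \(f_n:=\tfrac{n}{n+1}g_n\) satisfies \({\rm LIP}_d(f_n)\le L={\rm LIP}_d(f)\), while on every compact \(K\) (eventually contained in \(K_n\)) one has \(|f_n-f|\le\tfrac{n}{n+1}|g_n-f|+\tfrac1{n+1}|f|\to 0\) uniformly, so \(f_n\to f\) uniformly on compact sets.

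\textbf{The tool.} Let \(F:=\sup_i F_i\colon TM\to[0,+\infty]\); this is a lower semicontinuous (generalized, possibly rank--varying) sub-Finsler metric, and by monotone convergence its length functional is the increasing limit of those of the \(F_i\). Using this together with \cite{LDLP} one sees that \(d\) is a length (indeed geodesic) distance and \(d=d_F\). Consequently, for any locally Lipschitz \(g\) one has \({\rm LIP}_d(g)={\rm ess\,sup}_{x}\,F^\star(x,{\rm d}_x g)\), where \(F^\star(x,\cdot)\) is a convex, positively \(1\)-homogeneous function on \(T_x^*M\); moreover \(F^\star\) is upper semicontinuous on \(T^*M\) (for the same reason as in Lemma \ref{semic}, \(F\) being lower semicontinuous) and \(F^\star\le F_i^\star\), so that \(F^\star(x,\cdot)\) is sublinear and bounded by \(C_K|\cdot|\) uniformly on compact sets. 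Finally, the hypothesis \({\rm LIP}_d(f)=L\) translates into \(F^\star(x,{\rm d}_x f)\le L\) for a.e.\ \(x\), where \({\rm d}_x f\) denotes the (a.e.\ defined) horizontal differential of \(f\).

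\textbf{Construction and estimate.} Fix a locally finite atlas \(\{(U_\alpha,\phi_\alpha)\}\) and a subordinate partition of unity \(\{\theta_\alpha\}\), write \(f=\sum_\alpha\theta_\alpha f\), and set \(g:=\sum_\alpha M^\alpha_{\delta_\alpha}(\theta_\alpha f)\), where \(M^\alpha_\delta\) denotes mollification at scale \(\delta\) performed in the chart \(U_\alpha\) (each \(\theta_\alpha f\) being compactly supported there). For \(\delta_\alpha\searrow 0\) chosen suitably the sum is locally finite, \(g\in C^\infty(M)\), and \(g\to f\) uniformly on \(K_n\). To control \({\rm LIP}_d(g)\) one writes, in a chart around a given point \(x\),
\[
{\rm d}_x g=\sum_\alpha M^\alpha_{\delta_\alpha}\!\big(\theta_\alpha\,{\rm d}f\big)(x)
+\sum_\alpha\Big[M^\alpha_{\delta_\alpha}\!\big(f\,{\rm d}\theta_\alpha\big)(x)-f(x)\,{\rm d}_x\theta_\alpha\Big],
\]
using \(\sum_\alpha{\rm d}\theta_\alpha\equiv 0\): the second sum is a locally finite sum of differences ``function minus its mollification'', hence tends to \(0\) uniformly on compacts, while the first term is, up to \(O(\max_\alpha\delta_\alpha)\) corrections coming from the transition maps and the weights \(\theta_\alpha\), a convex combination of the values \(\{{\rm d}_y f:y\text{ near }x\}\). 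Applying Jensen's inequality to the convex function \(F^\star(x,\cdot)\) and then the upper semicontinuity of \(F^\star\) (in the form: on a compact set, \(F^\star(x,p)\le(1+\varepsilon)F^\star(y,p)\) whenever \(d(x,y)\) is small enough and \(p\) ranges over a bounded set, by a Lebesgue-number argument), together with \(F^\star(y,{\rm d}_y f)\le L\) a.e.\ and the bound \(F^\star(x,\cdot)\le C_K|\cdot|\), yields \(F^\star(x,{\rm d}_x g)\le(1+2\varepsilon)L\) at every \(x\) once the scales are small enough. By the characterization above, \({\rm LIP}_d(g)\le(1+2\varepsilon)L\); taking \(\varepsilon=\tfrac1{4n}\) and the scales also small enough that \(\sup_{K_n}|g-f|\le\tfrac1n\) gives the required \(g_n\), and the reduction step concludes.

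\textbf{Main obstacle.} The delicate part is precisely the estimate for \({\rm d}_x g\): when \(d\) is genuinely sub-Finsler the function \(f\) need not be Lipschitz in the Euclidean sense, so \({\rm d}_y f\) must be read as the horizontal differential and one must check that mollifying it in charts—against the background of vector fields with non-constant coefficients, of transition maps, and of the partition-of-unity derivative terms—costs no more than a factor \(1+\varepsilon\) in the dual (semi)norm \(F^\star\); here the decisive points are the identification \(d=d_F\) (so that \({\rm LIP}_d\) is the essential supremum of \(F^\star({\rm d}g)\), a purely pointwise quantity) and the upper semicontinuity of \(F^\star\) (so that the base-point oscillation of \(F^\star\) is absorbed into \(1+\varepsilon\), and it is legitimate to evaluate \(F^\star\) on the bounded family of horizontal differentials near \(x\)). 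The renormalization of the first step is then what upgrades ``\({}\le(1+\varepsilon)L\)'' to the sharp ``\({}\le{\rm LIP}_d(f)\)''.
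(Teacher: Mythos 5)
Your approach is genuinely different from the paper's, and the mollification step has a gap that is fatal precisely in the sub-Finsler regime the theorem is designed to cover.

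The decisive estimate is supposed to show \(F^\star(x,{\rm d}_x g)\le(1+2\varepsilon)L\) by writing \({\rm d}_x g\), via chart mollification and the Leibniz rule, as roughly a convex combination of \(\{{\rm d}_y f:y\text{ near }x\}\). But when \(F=\sup_i F_i\) is genuinely sub-Finsler (hence \(+\infty\) outside a proper horizontal subbundle), a \(d\)-Lipschitz \(f\) need not be locally Euclidean Lipschitz --- on the Heisenberg group, \(d_{cc}\)-Lipschitz functions are in general only Euclidean \(1/2\)-H\"older --- so the full differential \({\rm d}_y f\) required by your Leibniz formula does not exist as a bounded measurable covector field; only a horizontal differential is controlled, with \(F^\star(y,{\rm d}_y f)\le L\) a.e. Moreover, mollifying in a chart mixes base points of the horizontal distribution: for \(v\in H_xM\), the quantity \(\partial_v\big(M^\alpha_{\delta_\alpha}(\theta_\alpha f)\big)(x)=\int\partial_v\rho_{\delta_\alpha}(x-y)\,(\theta_\alpha f)(y)\,{\rm d}y\) integrates by parts against the \emph{constant} Euclidean direction \(v\), which is not horizontal at the nearby points \(y\) unless the distribution has constant coefficients. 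The resulting expression therefore involves the uncontrolled (indeed possibly non-existent) vertical derivatives of \(f\), and the Jensen step --- which needs \({\rm d}_x g\) to be an average of \emph{horizontal} covectors with \(F^\star\le L\) --- does not go through. This is exactly the obstacle that, in Carnot groups, forces one to regularize \(d_{cc}\)-Lipschitz functions by group convolution rather than by Euclidean convolution; on a general manifold no such substitute is available. (The ancillary claims \(d=d_F\) and \({\rm LIP}_d(g)={\rm ess\,sup}\,F^\star(\cdot,{\rm d}g)\) also require a Rademacher theorem for the limit sub-Finsler structure, which you do not establish, but the mollification estimate is the core gap.)

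The paper's proof avoids all of this by never mollifying a \(d\)-Lipschitz function directly. It approximates \(f\) from below by maxima of tent functions \(-L\,d(\cdot,x_j)+f(x_j)\) (still using only \(d\)), then, exploiting \(d_{F_{i_n}}\nearrow d\), replaces \(d\) by the Finsler distance \(d_{i_n}\) in those tents to produce \(g_n\) with \({\rm LIP}_{d_{i_n}}(g_n)\le L\); mollification is then carried out on the genuine Finsler manifold \((M,F_{i_n})\), where the \emph{full} gradient of a Lipschitz function is controlled. The key observation replacing your renormalization step is the monotonicity \(d_{i_n}\le d\): any \(d_{i_n}\)-Lipschitz function is automatically \(d\)-Lipschitz with no larger constant, so \({\rm LIP}_d(f_n)\le{\rm LIP}_{d_{i_n}}(f_n)\le L\). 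This is where the hypothesis \(d_{F_i}\nearrow d\) is used at the level of distances rather than of metrics, and it is what lets the theorem hold in the degenerate, rank-varying case.
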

\begin{proof}
Denote \(L\coloneqq{\rm LIP}_d(f)\) and \(d_i\coloneqq d_{F_i}\) for
every \(i\in\mathbb N\). Choose any countable, dense subset \((x_j)_j\) of \((M,d)\).
Given any \(n\in\mathbb N\), we define the function \(h_n\in{\rm LIP}_d(M)\) as
\[
h_n(x)\coloneqq\big(-L\,d(x,x_1)+f(x_1)\big)\vee\cdots\vee
\big(-L\,d(x,x_n)+f(x_n)\big)-\frac{1}{n}\quad\text{ for every }x\in M.
\]
Observe that \({\rm LIP}_d(h_n)\leq L\) and that \(h_n(x)<h_{n+1}(x)<f(x)\)
for every \(n\in\mathbb N\) and \(x\in M\). We claim that \(h_n(x)\nearrow f(x)\)
for all \(x\in M\). In order to prove it, fix any \(x\in M\) and \(\varepsilon>0\).
Pick some \(\bar n\in\mathbb N\) such that \(1/\bar n<\varepsilon\) and
\(d(x,x_{\bar n})<\varepsilon\). Then for every \(n\geq\bar n\) it holds that
\[
h_n(x)\geq -L\,d(x,x_{\bar n})+f(x_{\bar n})-\frac{1}{n}
\geq -L\,\varepsilon+\big(f(x)-L\,d(x,x_{\bar n})\big)-\frac{1}{\bar n}
\geq f(x)-(2L+1)\varepsilon,
\]
thus proving the claim. Fix an increasing sequence
\((K_n)_n\) of compact sets in \(M\) satisfying the following property:
given any compact set \(K\subseteq M\), there exists \(n\in\mathbb N\)
such that \(K\subseteq K_n\). In particular, one has that \(\bigcup_n K_n=M\).
Notice that \(h_n+\frac{1}{n(n+1)}\leq h_{n+1}\) on \(K_n\) for all \(n\in\mathbb N\).
Since \(d_{F_i}\nearrow d$, there exists \(i_n\in\mathbb N\) such that the function
\(g_n\colon M\to\mathbb R\), given by
\[
g_n(x)\coloneqq\big(-L\,d_{i_n}(x,x_1)+f(x_1)\big)\vee\cdots
\vee\big(-L\,d_{i_n}(x,x_n)+f(x_n)\big)-\frac{1}{n}\quad\text{ for every }x\in M,
\]
satisfies \(h_n<g_n<h_n+\frac{1}{n(n+1)}\) on \(K_n\). Note that
\(g_n\in{\rm LIP}_{d_{i_n}}(M)\) and \({\rm LIP}_{d_{i_n}}(g_n)=L\).
Thanks to a mollification argument, it is possible to build a function
\(f_n\in C^\infty(M)\cap{\rm LIP}_{d_{i_n}}(M)\) such that
\({\rm LIP}_{d_{i_n}}(f_n)\leq L\) and \(g_n<f_n<g_{n+1}\)
on \(K_n\). Therefore, for any \(n\in\mathbb N\) and \(x\in K_n\) it holds that
the sequence \(\big(f_j(x)\big)_{j\geq n}\) is strictly increasing and converging
to \(f(x)\). This grants that \(f_j\to f\) uniformly on \(K_n\) for any given \(n\in\mathbb N\).
Hence, our specific choice of \((K_n)_n\) implies that \(f_n\to f\) uniformly on
compact sets. Finally, the inequality \(d_{i_n}\leq d\) yields
\(f_n\in{\rm LIP}_d(M)\) and \({\rm LIP}_d(f_n)\leq{\rm LIP}_{d_{i_n}}(f_n)\leq L\)
for all \(n\in\mathbb N\), whence the statement follows.
\end{proof}

\def\cprime{$'$} \def\cprime{$'$}

\end{document}